\def\w{{\boldsymbol w}}
\def\d{{\boldsymbol d}}
\def\x{{\boldsymbol x}}
\def\h{{\boldsymbol h}} 
\def\y{{\boldsymbol y}}
\def\v{{\boldsymbol v}}
\def\g{{\boldsymbol g}}
\def\z{{\boldsymbol z}}
\def\B{{\mathcal B}}
\def\D{\mathcal{H}}
\def\G{{\mathcal{G}}}
\def\R{{\mathcal{R}}}
\def\bpsi{{\boldsymbol \psi}}
\def\bSigma{{\boldsymbol \Sigma}}
\def\H{{\boldsymbol H}}
\def\T{{\mathsf{T}}}
\def\A{{\mathcal A}}
\def\E{{\mathbb{E}}}
\def\Tr{{\mathsf{Tr}}}
\def\AT{{\textrm{AT}}}
\def\hyph{-\penalty0\hskip0pt\relax}
\newcommand{\qed}{\blacksquare}
\newtheorem{theorem}{Theorem}
\newtheorem{assumption}{Assumption}
\renewcommand{\qed}{\hfill$\blacksquare$}
\newtheorem{lemma}{Lemma}
\newtheorem{corollary}{Corollary}
\definecolor{BLUE}{rgb}{0,0,1}
\definecolor{orange}{RGB}{255,127,0}
\definecolor{lightblue}{RGB}{140,215,255}
\definecolor{green}{RGB}{0,100,0}
\newcommand{\psw}{{\scriptstyle{\mathcal{W}}}}
\newcommand{\sw}{\boldsymbol{{\scriptstyle{\mathcal{W}}}}}
\definecolor{r0}{RGB}{255,76,55}
\definecolor{r0}{RGB}{255,255,255}
\definecolor{r1}{RGB}{255,255,255}
\definecolor{r2}{RGB}{255,255,255}
\definecolor{r3}{RGB}{255,255,255}
\definecolor{r4}{RGB}{255,255,255}
\definecolor{r5}{RGB}{255,255,255}
\definecolor{r6}{RGB}{255,255,255}
\definecolor{r7}{RGB}{255,255,255}
\definecolor{r8}{RGB}{255,255,255}
\definecolor{r9}{RGB}{255,255,255}
\begin{document}

\title{Excess-Risk of Distributed Stochastic Learners}

\author{Zaid~J.~Towfic,~\IEEEmembership{Member,~IEEE,}
        Jianshu~Chen,~\IEEEmembership{Member,~IEEE,}
        and~Ali~H.~Sayed,~\IEEEmembership{Fellow,~IEEE}\vspace{-1\baselineskip}
\thanks{
Copyright (c) 2016 IEEE. Personal use of this material is permitted. However, permission to use this material for any other purposes must be obtained from the IEEE by sending a request to pubs-permissions@ieee.org.
}
\thanks{This work was supported in part by NSF grants CCF-1011918, ECCS-1407712, and CCF-1524250. A short version of this work appears in the conference presentations \cite{MLSP2,towfic2013distributed}.}
\thanks{Z. J. Towfic and J. Chen were with Department of Electrical Engineering, University of
California, Los Angeles. Z. J. Towfic is currently with MIT Lincoln Laboratory, Lexington, MA 02421. J. Chen is currently with Microsoft Research, Redmond, WA 98052. This work was performed while they were PhD students at UCLA. A. H. Sayed is with Department of Electrical Engineering, University of
California, Los Angeles, CA 90095. (e-mail: \{ztowfic,cjs09,sayed\}@ucla.edu).}
} 

\markboth{IEEE Transactions on Information Theory, VOL.~XX, NO.~XX, MONTH~2016}{Towfic, Chen, Sayed: 
Excess-Risk of Distributed Stochastic Learners}

\maketitle

\begin{abstract}%
This work studies the learning ability of consensus and diffusion distributed learners from continuous streams of data arising from different but related statistical distributions. Four distinctive features for diffusion learners are revealed in relation to other decentralized schemes even under left-stochastic combination policies. First, closed-form expressions for the evolution of their excess-risk are derived for strongly-convex risk functions under a diminishing step-size rule. Second, using these results, it is shown that the diffusion strategy improves the asymptotic convergence rate of the excess-risk relative to non-cooperative schemes. Third, it is shown that when the in-network cooperation rules are designed optimally, the performance of the diffusion implementation can outperform that of naive centralized processing. Finally, the arguments further show that diffusion outperforms consensus strategies asymptotically, and that the asymptotic excess-risk expression is invariant to the particular network topology. The framework adopted in this work studies convergence in the stronger mean-square-error sense, rather than in distribution, and develops tools that enable a close examination of the differences between distributed strategies in terms of asymptotic behavior, as well as in terms of convergence rates. 

\end{abstract}


\begin{keywords}
distributed stochastic optimization, diffusion strategies, consensus strategies, centralized processing, excess-risk, asymptotic behavior, convergence rate, combination policy
\end{keywords}

%

\allowdisplaybreaks
\section{Introduction} 
\label{sec:intro} 
Machine learning applications rely on the premise that it is possible to benefit from leveraging information collected from different users. The range of benefits, and the computational cost necessary to analyze the data, depend on how the information is mined. It is sometimes advantageous to aggregate  the information from all users at a central location for processing and analysis. Many current implementations rely on this centralized approach. However, the rapid increase in the number of users, coupled with privacy and communication constraints  related to transmitting,  storing, and analyzing huge amounts of data at remote central locations, have been serving as strong motivation for the development of decentralized solutions to learning and data mining \cite{zinkevich2010parallelized,yan,Ram,Paolo, theodoridis2011adaptive, zargham2011accelerated, yildiz2009distributed, shamir2014distributed, schizas2008consensus}.
 
In this work, we study the \emph{distributed} real-time prediction problem over a network of $N$ learners. We assume the network is connected, meaning that any two arbitrary agents are either connected directly or by means of a path passing through other agents. We do not expect the agents to share their data sets but only a parameter vector (or a statistic) that is representative of their local information. Such networks serve as useful models for peer-to-peer networks and social networks. The objective of the learning process is for all nodes to minimize some objective function, termed the \emph{risk} function, in a distributed manner. We shall compare the performance of cooperative and non-cooperative solutions by examining the gap between the risk achieved by the distributed implementations and the risk achieved by an oracle solution with access to the true distribution of the input data; we shall refer to this gap as the \emph{excess-risk}.

Among other contributions, this work studies stochastic gradient-based distributed strategies that are shown here to converge in the mean-square-error sense when a decaying step-size sequence is used, and that are also shown to outperform other implementations, even under left-stochastic 
combination rules \cite{aysal2009broadcast,MorralBianchi,jianshu_part1,jianshu_part2}. Specifically, we will show that the strategies under study achieve a better convergence rate than non-cooperative algorithms, and we will also explain why diffusion strategies outperform other distributed solutions such as those relying on consensus constructions or on doubly-stochastic combination policies \cite{Bianchi,yan,KarMoura2,nedic2,kar2012distributed}, as well as na\"{i}ve centralized algorithms \cite{zinkevich2010parallelized}. It was previously shown that the diffusion strategies outperform their consensus-based counterparts in the constant step-size scenario \cite{shine_diffusion_consensus}. We analytically show that the same conclusion holds in the diminishing step-size scenario even as the step-size decays. We also illustrate in the simulations that while diffusion and consensus-based algorithms have the same computational complexity, it turns out that diffusion algorithms reduce the overshoot during the transient phase. In comparison to the useful work \cite{Bianchi}, our formulation  studies convergence in the stronger mean-square-error sense, and develops analysis tools that do not depend on using the central limit theorem or on studying convergence in a weaker distributional sense. In addition, unlike the works \cite{Agarwal_NIPS,shamir2014distributed,zinkevich2010parallelized,Bach_NIPS}, we are not solely interested in bounding the excess-risk. Instead, we are interested in obtaining a closed-form expression for the asymptotic excess-risk of distributed and non-distributed strategies in order to compare and optimize their absolute asymptotic performance. 

Recently, there has also been interest in primal-dual approaches for distributed optimization \cite{PrimalDual,ouyang2013stochastic,boyd2011distributed,Paolo,schizas2009distributed}. Generally, these approaches are studied in the deterministic optimization context where the iterates are not prone to noise or when the risk function is non-differentiable. It was demonstrated in \cite{PrimalDual} that the primal diffusion strategy studied in this manuscript also outperforms augmented Lagrangian and Arrow-Hurwicz primal-dual algorithms in the stochastic constant-step-size setting in both stability and steady-state performance. It is possible to carry out similar comparisons in the diminishing step-size scenario, but this manuscript is focused on the study of primal approaches. As the extended analysis and derivation in later sections and appendices show, this case is already demanding enough to warrant separate consideration in this work.

The techniques developed will allow us to examine analytically and closely the differences between distributed strategies in terms of asymptotic behavior, as well as in terms of rates of convergence by exploiting properties of Gamma functions and the convergence properties of products of infinitely many scaling coefficients. For instance, when the noise profile is uniform across all agents, one of our conclusions will be to show that the convergence rate of diffusion strategies is in the order of  $\Theta(1/N i)$, where the notation $f(n) = \Theta(g(n))$ means that the sequence $f(n)$ decays at the same rate as $g(n)$ for sufficiently large $n$, i.e., there exist positive constants $c_1$ and $c_2$ and an integer $n_o$ such that $c_1g(n)\leq f(n)\leq c_2g(n)$ for all $n > n_o$. This rate is consistent with the result established for consensus implementations under doubly-stochastic 
combination policies in \cite{KarMoura2,kar2012distributed,Bianchi} albeit under a weaker convergence in distribution sense, where it was argued that the estimation error approaches a Gaussian distribution whose covariance matrix scales as $1/Ni$. On the other hand, when the noise profile is non-uniform across the agents, the analysis will show  that diffusion methods can surpass this rate. These and other useful conclusions will follow from the detailed mean-square and convergence analyses that are carried out in the sequel. The theoretical findings are illustrated by simulations in the last section.

For ease of reference, we summarize here the main conclusions in the manuscript: 
\begin{itemize}
  \item We derive a closed-form expression (and not only a bound) for the asymptotic excess-risk curve of the distributed strategies. 
  \item We analyze the derived expression to conclude that the asymptotic performance depends on the network topology solely through the Perron vector of the combination matrix used in the strategy. In this way, different topology structures with the same Perron vector are shown to attain the same asymptotic performance. That is, the full eigen-structure of the topology are become irrelevant in the asymptotic regime.
  \item We show that once the Perron vector is optimized to minimize the asymptotic excess-risk, it is possible to construct a combination matrix with that Perron vector in order to attain the optimal performance in a fully distributed manner.
  \item We compare the asymptotic excess-risk performance of the diffusion strategy to centralized and non-cooperative strategies to conclude that the diffusion strategy can attain the performance of a weighted centralized strategy asymptotically.
  \item We compare the asymptotic excess-risk performance of the diffusion strategy to consensus distributed strategies to conclude that the asymptotic excess-risk curve of the consensus strategy will be worse than that of the diffusion strategy.
  \item We verify our conclusions through simulations.
\end{itemize}

\textbf{Notation}: Random quantities are denoted in boldface. Throughout the manuscript, all vectors are column vectors. Matrices are denoted in capital letters, while vectors and scalars are denoted in lowercase letters. Network variables that aggregate variables across the network are denoted in calligraphic letters. Unless otherwise noted, the notation $\|\cdot\|$ refers to the Euclidean norm for a vector and to the matrix norm that is induced by Euclidean norm for vectors. Furthermore, the notation $\otimes$ denotes the Kronecker product operation \cite[p.~139]{laub}. The notation $\mathds{1}_N$ denotes a vector of dimension $N\times 1$ with all its elements equal to one.

\section{Problem Formulation and Algorithms}
\label{sec:prob_formulation}
Consider a network of $N$ learners. Each learner $k$ is subject to a streaming sequence of independent data samples $\x_{k,i}$, for $i=1,2,\ldots$, arising from some fixed distribution $\mathcal{X}_k$. The goal of each agent is to learn the vector $w^o\in\mathbb{R}^{M\times 1}$ that optimizes the average of some \emph{loss function}, say, $\E Q(w,\x_{k,i})$, where the expectation is over the distribution of the data $\x_{k,i}$ and $w\in\mathbb{R}^{M\times 1}$ is the vector variable of optimization. For example, in order to learn the hyper-plane that best separates feature data $\h_{k,i}\in\mathbb{R}^{M\times 1}$ belonging to one of two classes $\y_{k}(i) \in \{+1,-1\}$, a regularized logistic-regression (RLR) algorithm would minimize the expected value of the following loss function over $w$ (with the expectation computed over the distribution of the data $\x_{k,i} \triangleq \{\h_{k,i},\y_{k,i}\} \sim \mathcal{X}_k$) \cite{Bach_NIPS}:
\begin{align}
	Q^\textrm{RLR}(w,\h_{k,i},\y_{k}(i)) \triangleq \frac{\rho}{2} \|w\|^2 \!+\! \log(1\!+\!e^{-\y_{k}(i) \h_{k,i}^\T \!w})
	\label{eq:RLR}
\end{align}
while a mean-square-error algorithm would minimize the expected value of the quadratic loss \cite{widrow1960adaptive} (also referred to as the ``delta rule'' \cite{bottou-mlss-2004}):
\begin{align}
	Q^\textrm{LMS}(w,\h_{k,i},\y_{k}(i)) \triangleq \frac{\rho}{2} \|w\|^2 + (\y_{k}(i) - \h_{k,i}^\T w)^2 
	\label{eq:LMS}
\end{align}
The expectation of the loss function over the distribution of the data is referred to as the \emph{risk function} \cite[p.~16]{vapnik}:
\begin{align}
	J_k(w) \triangleq \E\{Q(w,\x_{k,i})\}
	\quad\quad\quad\textrm{[risk function]}
	\label{eq:J_w} 
\end{align} 
and we denote the optimizer of \eqref{eq:J_w} by $w^o$:
\begin{align}
	w^o &\triangleq \underset{w}{\arg \min\ } J_k(w)
	\label{eq:w_o}
\end{align}
where $w^o$ is unique when $J_k(w)$ is strongly-convex, which we shall assume for the remainder of the manuscript. The assumption of strong-convexity of the risk function is important in practice since the convergence rate of most stochastic approximation strategies will be significantly reduced when the condition does not hold \cite{Bach_NIPS}. This is not a limitation in most problems arising in the context of adaptation and learning since regularization (such as $\ell_2$) is often used and it helps ensure strong convexity. The risk function can be viewed as a measure of the ``prediction-error'' of a classifier or regression method since it evaluates the performance of the method against samples taken from the distribution of the input data that have not yet been observed by the classifier/regressor \cite[p.~16]{vapnik}. The risk serves as a measure about how well an estimate $w$ will perform on a new sample $\x_{k,i}\sim\mathcal{X}_k$ on average. For this reason, the risk is also referred to as the generalization ability of the classifier.

We will assume for the remainder of this exposition that the optimizer $w^o$ is the same for all nodes $k=1,\ldots,N$. This case is common in both machine learning (where, for example, $\mathcal{X}_k = \mathcal{X}$ for all $k$) and distributed inference applications where the distributions $\mathcal{X}_k$ are dependent on a common parameter vector $w^o$ to be optimized (see Sec. \ref{sec:sim} further ahead). In order to measure the performance of each learner, we define the excess-risk (ER) at node $k$ as:
\begin{align}
	\textrm{ER}_k(i) \triangleq \E\{J_k(\w_{k,i-1}) - J_k(w^o)\}
	\label{eq:ER_k}
\end{align}
where $\w_{k,i-1}$ denotes the estimator of $w^o$ that is computed by node $k$ at time $i-1$ (i.e., it is the estimator that is generated observing past data within the neighborhood of node $k$). The excess-risk is non-negative because $J_k(w)$ is strongly-convex and, therefore, $J_k(w') > J_k(w^o)$ for all $w' \neq w^o$. 
The expectation in \eqref{eq:ER_k} is over the data since $\w_{k,i-1}$ is a random quantity that depends on all the data samples up to time $i-1$ (i.e., $\{\x_{\ell,j}\}_{j \le i-1, 1\leq \ell\leq N}$). The dependence on the data from the other agents arises from the network topology.
	Our interest in this work is to characterize the convergence rate, to zero, of the excess-risk for various distributed and non-distributed strategies of learning $\w_{k,i-1}$ for a given loss function. We also derive closed-form expressions for the asymptotic excess-risk and compare the absolute-value of the excess-risk curves for algorithms that converge at the same rate.

\begin{figure*}[t!]
	\centering
    \begin{subfigure}[t]{0.2\textwidth}
        \includegraphics[width=\textwidth]{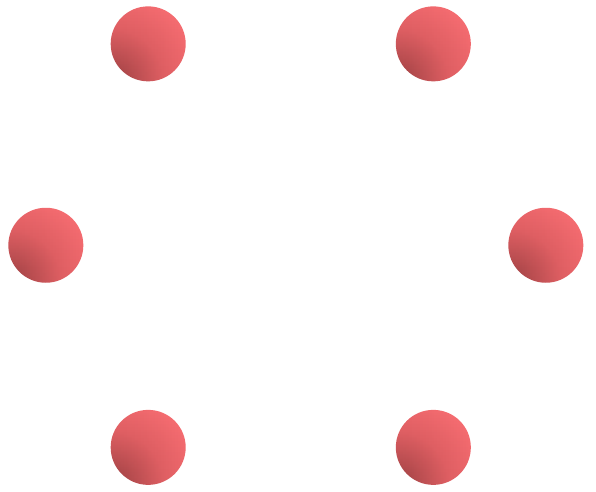}
        \caption{Non-cooperative}
    \end{subfigure}%
    \quad\quad
    \begin{subfigure}[t]{0.2\textwidth}
        \includegraphics[width=\textwidth]{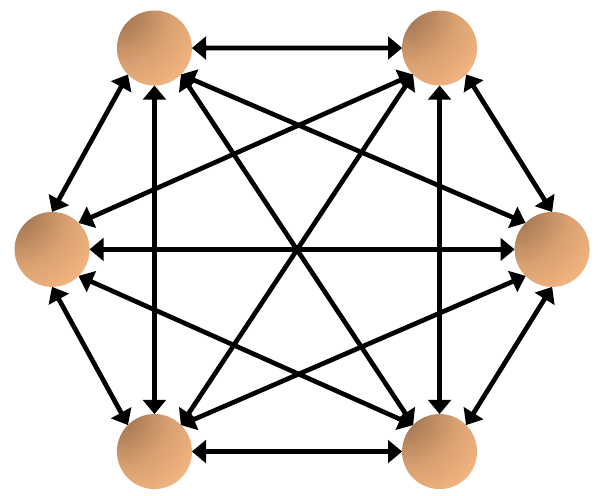}
        \caption{Fully-connected}
    \end{subfigure}%
    \quad\quad
    \begin{subfigure}[t]{0.2\textwidth}
        \centering
        \includegraphics[width=\textwidth]{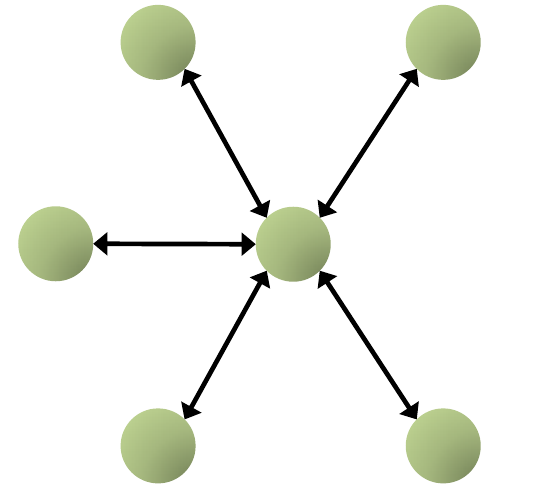}
        \caption{Centralized}
    \end{subfigure}%
    \quad\quad
    \begin{subfigure}[t]{0.2\textwidth} 
         \centering
        \includegraphics[width=\textwidth]{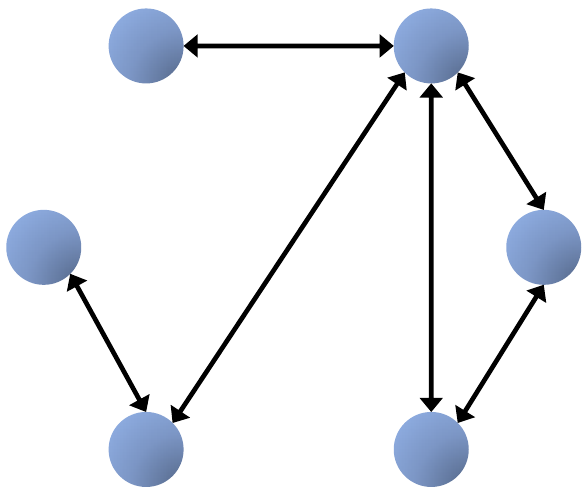}
        \caption{Sparsely-connected}
    \end{subfigure}
    \caption{Different network topologies for distributed learning: (a) a non-cooperative topology where agents do not interact with one another, (b) a fully-connected topology where every agent is connected to every other agent in the network, (c) a centralized topology that is equivalent in performance to the fully-connected topology but requires less communication overhead, and (d) a sparsely connected network topology in which each user is connected to a subset of the users in the network.}
    \label{fig:topologies}
\end{figure*}
There are various approaches for optimizing \eqref{eq:w_o}. We concentrate on \emph{fully-distributed} strategies that operate over sparsely-connected networks. The concept of a fully distributed strategy is used here to mean the following:
\begin{itemize}
		\item[(i)]
		There is no central node that is coordinating the communication and computation during the learning process.
		\item[(ii)]
		A node does not need to be connected to all other nodes. Indeed, as long as the network is connected (and it can be sparsely connected), the algorithm is able to approach the solution to the global learning problem.
		\item[(iii)]
		Only one-hop communication is allowed during the learning process. That is, we do not allow the routing of a data packet over the network. Instead, each agent/node is only allowed to be directly communicating with its intermediate neighbors.
	\end{itemize}
	Figure \ref{fig:topologies} illustrates the types of topologies examined in this manuscript. It is important to notice that the centralized and fully-connected topologies are theoretically equivalent, but practically different as the centralized topology greatly reduces the amount of information that is communicated throughout the network. The centralized topology, however, is not robust to node failure since the entire solution breaks down if the central node fails. Throughout the remainder of the manuscript, we will refer to the centralized and fully-connected approaches interchangeably since they have identical excess-risk performance.

\subsection{Non-Cooperative Strategy}
First, we examine the non-cooperative strategy for optimizing \eqref{eq:w_o}, which is for each node $k$ to run independently a stochastic gradient algorithm of the following form for $i\geq 1$ \cite{Polyak,NOW_ML,tsitsiklis1986distributed,SayedProcIEEE}:
\begin{align}
	\w_{k,i} = \w_{k,i-1}\!-\!\mu(i) \nabla_{\!w} Q(\!\w_{k,i-1},\!\x_{k,i}\!) \quad\quad\textrm{[no cooperation]}
	\label{eq:non_coop}
\end{align}
where $\nabla_{w} Q(\cdot)$ denotes the gradient vector of the loss function, and $\mu(i)\geq 0$ is a step-size sequence. The gradient vector employed in \eqref{eq:non_coop} is an approximation for the actual gradient vector, $\nabla_{w} J_k(\cdot)$, of the risk function. The difference between the true gradient vector and its approximation used in \eqref{eq:non_coop} is called \emph{gradient noise}. Due to the presence of the gradient noise, the estimate $\w_{k,i}$ generated by \eqref{eq:non_coop} becomes a random quantity; we use boldface letters to refer to random variables throughout our manuscript, which is already reflected in our notation in \eqref{eq:non_coop}.

It is shown in \cite{Polyak, Nemirovski} that for strongly-convex risk functions $J_k(w)$, the non-cooperative scheme \eqref{eq:non_coop} achieves 
an asymptotic convergence rate in the order of $O(1/i)$ under some conditions on the gradient noise and the step-size sequence $\mu(i)$, where the notation $f(n) = O(g(n))$ means that the sequence $f(n)$ decays at a rate that is at most the rate of decay of $g(n)$ for sufficiently large $n$---i.e., there exist positive constant $c_1$ and an integer $n_0$ such that $f(n) \leq c_1 g(n)$ for all $n> n_0$. In this way, in order to achieve an excess-risk accuracy on the order of $O(\epsilon)$, the non-cooperative algorithm \eqref{eq:non_coop} would require $O(1/\epsilon)$ samples. It is further shown in \cite{Polyak, Agrawal_Information_Theoretic} that no algorithm can improve upon this rate under the same conditions. This implies that if no cooperation is to take place between the nodes, then the best asymptotic rate each learner would hope to achieve is on the order of $O(1/i)$.

\subsection{Centralized Strategy}
Now, in place of the non-cooperative strategy, let us assume that the $N$ nodes transmit their samples to a \emph{central} processor, which executes the following  algorithm:
\begin{align}
	\w_{i} = \w_{i-1} \!-\! \frac{\mu(i)}{N} \!\sum_{k=1}^N \nabla_w Q(\w_{i-1},\x_{k,i}) \quad\! \textrm{[unweighted\ cent.]}
	\label{eq:full_gradient}
\end{align}
It can be shown that this implementation will have an asymptotic convergence rate in the order of $O(1/ N i)$ for step-size sequences of the form $\mu(i)=\mu/i$ (see Corollary \ref{cor:cent}).  In other words, the centralized implementation \eqref{eq:full_gradient} provides an $N-$fold increase in convergence rate relative to the non-cooperative solution \eqref{eq:non_coop}. One of the questions we wish to answer in this work is whether it is possible to derive a {\em fully} distributed algorithm that allows \emph{every} node in the network to converge (in the mean-square-error sense) at the same rate as the centralized solution, i.e., $O(1/N i)$, with only communication between neighboring nodes and for general ad-hoc networks. We show that this task is indeed possible. We will additionally show that the distributed strategy can outperform the na\"{i}ve centralized implementation  \eqref{eq:full_gradient} when the gradient noise profile across the agents is non-uniform, but that it will match the performance of a weighted version of \eqref{eq:full_gradient}, namely, the following {\em weighted} centralized strategy:

\begin{align}
	\w_{i} = \w_{i-1} \!-\! \mu(i) \!\sum_{k=1}^N \pi_k \nabla_w Q(\w_{i-1},\x_{k,i}) \quad\! \textrm{[weighted\ cent.]}
	\label{eq:weighted_full_gradient}
\end{align}
where the $\{\pi_k\}$ are convex combination weights that satisfy:
\begin{align}
	\pi_k > 0, \quad\quad \sum_{k=1}^N \pi_k = 1
\end{align}
and are meant to discount gradients with higher noise power compared to others. We next describe two popular fully-distributed strategies.  

\subsection{Diffusion Strategy} 
Following the approach of \cite{jianshu_part1,NOW_ML,SayedProcIEEE}, the diffusion strategy for evaluating distributed estimates $\w_{k,i}$ for $w^o$ in \eqref{eq:w_o} takes the following form:
\begin{alignat}{2}
			\boldsymbol{\psi}_{k,i} \!&=\! \w_{k,i-1} \!-\! \mu(i) \nabla_w Q(\w_{k,i-1},\x_{k,i}) &\ \ &\textrm{[adaptation]} \label{eq:A}\\
			\w_{k,i}   \!&=\! \sum_{\ell=1}^N a_{\ell k} \boldsymbol{\psi}_{\ell,i} & &\textrm{[aggregation]} \label{eq:C2}
\end{alignat}
where $\x_{k,i}$ denotes the current data sample available at node $k$. Each node $k$ begins with an estimate $\w_{k,0}$ and employs a diminishing positive step-size sequence $\mu(i)$. The non-negative coefficients $\{a_{\ell k}\}$, which form a left-stochastic $N \times N$ combination matrix $A=[a_{\ell k}]$, are used to scale information arriving at node $k$ from its neighbors. These coefficients are chosen to satisfy:
\begin{align}
	\underset{\ell = 1}{\overset{N}{\sum}} a_{\ell k}\! =\! 1, \ a_{\ell k}= 0 \textrm{ if nodes } (\ell,\! k) \textrm{ are not connected}
\end{align} 
We emphasize that we are only requiring $A$ to be left-stochastic, meaning that only each of its columns should add up to one rather than each of its columns and rows. The neighborhood $\mathcal{N}_k$ for each node $k$ is defined as the set of nodes $\{\ell\}$ for which $a_{\ell k} \neq 0$. The neighborhood $\mathcal{N}_k$ is typically known to agent $k$. The main difference between the above algorithm and the original adapt-then-combine (ATC) diffusion strategy studied in \cite{jianshu_part1,SayedProcIEEE,NOW_ML} is that we are employing a diminishing step-size sequence $\mu(i)$ as opposed to a constant step-size. Constant step-sizes have the distinct advantage that they allow nodes to continue adapting their estimates in response to drifts in the underlying data distribution \cite{zhao2012diffusion}. In this work, we are interested in examining the generalization ability of distributed learners asymptotically when the underlying distribution, ${\cal X}_k$, remains stationary, in which case the use of decaying step-sizes sequences is justified. If the statistical distribution of the data were subject to drifts, then constant step-sizes become a necessity, and this scenario is already studied in some detail in \cite{jianshu_part1,jianshu_part2,SayedProcIEEE,NOW_ML}.

\subsection{Consensus Strategy}
In addition to \eqref{eq:A}--\eqref{eq:C2}, we shall examine the following consensus-based implementation \cite{yan,KarMoura2,nedic2,tsitsiklis1986distributed} for solving the same problem \eqref{eq:w_o}:  
\begin{equation}
	\w_{k,i}   = \sum_{\ell=1}^N \!a_{\ell k} \w_{\ell,i-1} \!-\! \mu(i) \nabla_w Q(\w_{k,i-1},\x_{k,i}) \quad\textrm{[consensus]}
	\label{eq:consensus}
\end{equation}
The diffusion and consensus strategies \eqref{eq:A}-\eqref{eq:consensus} have exactly the same computational complexity, except that the computations are performed in a different order. We will see in Sec. \ref{sec:consensus} that this difference enhances the performance of diffusion over consensus. Moreover, in the constant step-size case, the difference in the order in which the operations are performed causes   an anomaly in the behavior of consensus solutions in that they can become unstable even when all individual nodes are able to solve the inference task in a stable manner; see  \cite{shine_diffusion_consensus,SayedProcIEEE,NOW_ML}. Furthermore, consensus strategies of the form \eqref{eq:consensus} are usually limited to employing a {\em doubly}-stochastic combination matrix $A$. The analysis in the sequel will show that left-stochastic matrices actually lead to improved excess-risk performance (see Eqs. \eqref{eq:quadratic_optimization_p}--\eqref{eq:Metropolis_DS}) while convergence of the distributed implementation continues to be guaranteed (see Theorem \ref{thm:non_asym_convergence_main}).

\section{Main Assumptions}
\label{sec:assumptions}
Before proceeding with the analysis, we list in this section the main assumptions that are needed to facilitate the exposition. The conditions listed below are common in the broad stochastic optimization literature --- see the explanations in \cite{Polyak,tsitsiklis1986distributed,SayedProcIEEE,NOW_ML}. The first condition assumes that the $J_k(w)$ are strongly-convex, with a common minimizer  $w^o$ for  $k=1,\ldots,N$. This condition ensures that the optimization problem \eqref{eq:w_o} is well-conditioned. 
\begin{assumption}[\textbf{Properties of risk functions}]
	\label{ass:HessianAssumption}
	Each risk function $J_k(w)$ is twice continuously{\hyph}differentiable and its Hessian matrix is uniformly bounded from below and from above, namely, 
	\begin{equation}
		0 < \underline{\lambda} I \leq \nabla^2_w J_k(w) \leq \overline{\lambda} {I} < \infty
		\label{eq:HessianAssumption}
	\end{equation}
	Furthermore, the risks at the various agents are minimized at the same location:
	\begin{align}
		w^o = \underset{w}{\arg\min}\  J_k(w),\quad\quad k=1,\ldots,N
	\end{align}
	and the Hessian matrices are locally Lipschitz continuous at $w^o$, i.e., for all $\|w^o-w\| < \varepsilon$, there exists some $\tau_k \geq 0$ such that:
	\begin{align}
		\|\nabla^2_w J_k(w^o) - \nabla^2_w J_k(w)\| \leq \tau_k \cdot \|w^o - w\| \label{eq:Lipschitz_Hessian}
	\end{align}	
	We denote the value of the Hessian matrices at $w=w^o$ (assumed uniform across the agents) by 
	\begin{align}
		H \triangleq \nabla_w^2 J_{k} (w^o), \quad\quad k=1,\ldots,N
	\end{align}
	where $H>0$. We let $\lambda_{\min} > 0$ denote the smallest eigenvalue of $H$.\hfill$\blacksquare$
\end{assumption}
In fact, conditions \eqref{eq:HessianAssumption} and the locally Lipschitz condition \eqref{eq:Lipschitz_Hessian} jointly imply \eqref{eq:Lipschitz_Hessian} globally (see Lemma E.8 in \cite{NOW_ML}); i.e., for all $w \in \mathbb{R}^{M\times 1}$, we have that there exists some $\tau_k$ such that:
\begin{align}
	\|\nabla^2_w J_k(w^o) - \nabla^2_w J_k(w)\| \leq \tau_k \cdot \|w^o - w\| \label{eq:global_Lipschitz_Hessian}
\end{align}
Observe that
\begin{align}
	\lambda_{\min}\geq \underline{\lambda} \label{eq:lambda_min>=bar_lambda}
\end{align}
\noindent One useful implication that follows from Assumption 1 is the following. Consider the expected excess-risk \eqref{eq:ER_k} at node $k$. Using the following sequence of inequalities, we can bound the excess-risk by a square weighted norm:
\begin{subequations}
\begin{align}
	\textrm{ER}_k(i) &\triangleq \mathbb{E}_\w \{J_k(\w_{k,i-1}) - J_k(w^o)\}	\nonumber\\
	&\stackrel{\mathrm{(a)}}{=} \mathbb{E}_\w\|\widetilde{\w}_{k,i-1}\|_{{\bm{S}}_{k,i}}^2 \label{eq:MSE_ind}\\
	&\stackrel{\mathrm{(b)}}{\leq} \frac{\overline{\lambda}}{2} \cdot \mathbb{E}_\w\| \widetilde{\w}_{k,i-1}\|^2
	\label{eq:function_diff_bound}
\end{align}
\end{subequations}
where $\widetilde{\w}_{k,i} \triangleq w^o - \w_{k,i}$, $\E_\w\{\cdot\}$ denotes expectation over the distribution of $\w$, and the weighting matrix $\bm{S}_{k,i}$ is defined as
\begin{align}
	\bm{S}_{k,i} \triangleq \left[\int_0^1 t \int_0^1 \nabla_w^2 J_k(w^o - s \ t \ \widetilde{\w}_{k,i-1})ds dt \right]
	\label{eq:weight_T}
\end{align}
Step (a) is a consequence of applying the following mean-value theorem \cite[p.~24]{Polyak} \cite{NOW_ML} twice for an arbitrary real-valued differentiable function $f(\cdot)$:
\begin{equation}
	f(a+b) = f(a) + \left(\int_0^1 \nabla^\T f(a + t\cdot b)\ dt \right) \cdot b
	\label{eq:polyak1}
\end{equation}
and the fact that $w^o$ optimizes $J_k(w)$ so that $\nabla_w J_k(w^o) = 0$. Step (b) is due to \eqref{eq:HessianAssumption} and \eqref{eq:weight_T}.

Expression \eqref{eq:MSE_ind} shows that the expected excess-risk at node $k$ is {\em equal} to a weighted mean-square-error with weight matrix \eqref{eq:weight_T}. This means that one way to compute or bound the expected excess-risk is by evaluating weighted mean-square-error quantities of the form \eqref{eq:MSE_ind} or \eqref{eq:function_diff_bound}. This is the route that we will follow in this manuscript. We will analyze the right-hand side of \eqref{eq:MSE_ind} in order to draw conclusions regarding the evolution of the expected excess-risk. In particular, once we establish that the distributed algorithm converges in the mean-square-error sense, then inequality  \eqref{eq:function_diff_bound} will immediately allow us to conclude that the algorithm also converges in excess-risk. Similarly, we can obtain the asymptotic expression for the excess-risk by leveraging the  weighted-mean-square-error analysis developed for constant step-size distributed strategies \cite{jianshu_part1,jianshu_part2,NOW_ML}, adjusted for the decaying step-size case. Observe that these conclusions are different than the useful results in \cite{Bianchi}, which focused on studying convergence in distribution. The mean-square-error results will enable us to expose analytically various interesting differences in the performance of distributed strategies, such as diffusion and consensus.

Our second condition is on the gradient noise process, which is defined, for a generic vector $\w$, as  
\begin{align}
\v_{k,i}(\w) \triangleq \nabla_w Q(\w,\x_{k,i}) - \nabla_w J_k(\w)
		\label{eq:grad_model}	
\end{align}
We collect the noises from across the network into a column vector 
\begin{align}
	\g_{i}(\sw) &\triangleq \textrm{col}\{\v_{1,i}(\w_{1}),\ldots,\v_{N,i}(\w_{N})\}
\end{align}
where we are introducing the vector notation
\begin{align}
	\sw &\triangleq \mathrm{col}\{\w_1,\ldots,\w_N\}
\end{align}
for the collection of parameters across the agents. We denote the covariance matrix of the gradient noise vector by 
\begin{align}
		\mathcal{R}_{g,i}(\sw) &\triangleq \E\{\g_{i}(\sw) \g_{i}^\T(\sw) | \boldsymbol{\mathcal{F}}_{i-1}\} \label{eq:Mathcal_R_v1}	
\end{align}
where the conditioning is in relation to the past history of the estimators, 
$\boldsymbol{\mathcal{F}}_{i-1} \!\triangleq\! \{\w_{k,j}:k = 1 , \ldots, N\ \mathrm{and}\ j \leq i-1\}$.  The following conditions are relaxations of assumptions that are regularly considered in the stochastic approximation literature; they are generally satisfied in important scenarios, such as logistic regression or quadratic loss functions of the form \eqref{eq:RLR}--\eqref{eq:LMS} --- see \cite{NOW_ML}. 

\begin{assumption}[\textbf{Gradient noise model}]
	\label{ass:noiseModeling}
	We assume the gradient noise process satisfies: 
\begin{align}
		&\mathbb{E}\{\v_{k,i}(\w) | \boldsymbol{\mathcal{F}}_{i-1}\} = 0 \\
		&\mathbb{E}\{\|\v_{k,i}(\w)\|^4 | \boldsymbol{\mathcal{F}}_{i-1}\} \leq \alpha_4 \|w^o - \w\|^4 + \sigma_{v4}^4  \label{eq:noise_fourth_bound}\\
		&\E\{\v_{k,i}^\T(w^o)\v_{\ell,i}(w^o)\} = 0, \quad k \neq \ell \label{eq:uncorrelated_noise_in_space}
\end{align}
for some $\alpha_4\geq 0$, $\sigma_{v4}^4 \geq 0$, as well as: 
\begin{align}
		&\|\mathcal{R}_{g,i}(\psw) - \mathcal{R}_{g,i}(\mathds{1}_N\!\otimes\! w^o)\| \leq \kappa \|\mathds{1}_N\!\otimes\!w^o-\psw\|^\gamma \label{eq:noise_covar_lipschitz_local}\\
		&\lim_{i\rightarrow\infty} \E \{\mathcal{R}_{g,i}(\mathds{1}_N \otimes w^o)\} \triangleq \mathcal{R}_{g}^o \label{eq:noise_covariance}
	\end{align}	
	for some $\kappa\geq 0$, $0 < \gamma \leq 4$, and where
	\begin{align}
		\psw &\triangleq \mathrm{col}\{w_1,\ldots,w_N\}
\end{align}		
and \eqref{eq:noise_covar_lipschitz_local} is assumed to hold  for $\|\mathds{1}_N\!\otimes\!w^o-\psw\| \leq \epsilon$, for some small $\epsilon>0$. \hfill$\blacksquare$
\end{assumption}

Observe that Assumption \ref{ass:noiseModeling} implies that:
\begin{align}
\mathbb{E}\{\|\v_{k,i}(\w)\|^2 | \boldsymbol{\mathcal{F}}_{i-1}\} &\leq \alpha_2 \cdot\|w^o\!-\!\w\|^2 \!+\! \sigma_{v2}^2 \label{eq:noise_var_bound1}
\end{align}
for some $\alpha_2,\sigma_{v2}^2 \geq 0$ and $\w \in \boldsymbol{\mathcal{F}}_{i-1}$. In addition, the local Lipschitz condition \eqref{eq:noise_covar_lipschitz_local} of order $\gamma$ \cite[p.~53]{jeffreys1999methods} (sometimes referred to as the H\"older condition of order $\gamma$ \cite[p.~110]{howie2012real}) implies, under Assumption \ref{ass:HessianAssumption}, that the following global condition also holds \cite{NOW_ML, jianshu_part2}:
\begin{align}
	\|\mathcal{R}_{g,i}(\psw) - \mathcal{R}_{g,i}(\mathds{1}_N\!\otimes\! w^o)\| &\leq \kappa_1 \|\mathds{1}_N\!\otimes\! w^o-\psw\|^2 + \nonumber\\
	&\quad\ \kappa \|\mathds{1}_N\!\otimes\!w^o-\psw\|^\gamma \label{eq:noise_covar_lipschitz}
\end{align}
for some constant $\kappa_1\geq 0$ that depends on $\epsilon$ and where $\kappa$ is from \eqref{eq:noise_covar_lipschitz_local}. Furthermore, due to \eqref{eq:uncorrelated_noise_in_space}, we have that the matrix $\mathcal{R}_g^o$ is block-diagonal:
\begin{align}
	\mathcal{R}_g^o &\triangleq \textrm{blockdiag}\{R_{v,1}^o,\ldots,R_{v,N}^o\}	\label{eq:R_v} \\
	R_{v,k}^o &\triangleq \lim_{i\rightarrow\infty} \E\{\v_{k,i}(w^o) \v_{k,i}^\T(w^o)\}
\end{align}
Since nodes sample the data in an independent fashion, it is reasonable to expect the gradient noise to be uncorrelated across all nodes, as required by  \eqref{eq:uncorrelated_noise_in_space}. 

Our third condition relates to the structure of the network topology. We will assume that the network is strongly-connected, which means that (a) there exists  at least one nontrivial self-loop, $a_{kk}>0$ for some $k$, and (b) for any two agents $k$ and $\ell$, there exists a path  with nonzero weights $\{a_{k,m_1}, a_{m_1,m_2},\ldots,a_{m_r,\ell}\}$ from $k$ to $\ell$, either directly if they are neighbors or through other agents. It is well-known that the combination matrix $A$ for such networks is primitive \cite[p.~516]{HornJohnsonVol1}. That is, all entries of $A$ are non-negative and there exists some positive integer $n_o>0$ such that all entries of $A^{n_o}$ are strictly positive. One important property of primitive matrices follows from the Perron-Frobenius theorem \cite[p.~534]{HornJohnsonVol1}; $A$ will have a single eigenvalue at one, while all other eigenvalues of $A$ will lie strictly inside the unit circle. Moreover, if we let $p$ denote the right-eigenvector associated with the eigenvalue at one, and normalize its entries to add up to one, i.e.,
\begin{equation} 
	A p = p,\;\;\;\;\mathds{1}_N^\T p=1
\end{equation}
then all entries of $p$ will be strictly positive. We shall refer to $p$ as the Perron eigenvector of $A$. We formalize this assumption in the following:
\begin{assumption}[\textbf{Network Topology}]
\label{ass:topology}
The network is strongly-connected so that the combination matrix $A$ is primitive with $\mathds{1}^\T p = 1$ and $p \succ 0$, where $p$ denotes the the Perron eigenvector of $A$. \qed
\end{assumption}

\section{Main Results}
\label{sec:main_results}
In this section, we list the main results and defer the detailed proofs to the appendices. 
\subsection{Convergence Properties}
\label{ssec:asymBound}
Our first result provides conditions on the step-size sequence under which the diffusion strategy  converges both in the mean-square-error (MSE)  sense and also almost surely. The difference between the two sets of conditions that appear below is that in one case the step-size sequence is additionally required to be square-summable. 
\begin{theorem}[\textbf{Convergence rates}]
\label{thm:non_asym_convergence_main}
	Let Assumptions \ref{ass:HessianAssumption}-\ref{ass:topology} hold and let the step-size sequence satisfy
\begin{align}
	\mu(i) > 0, \quad \sum_{i=1}^\infty \mu(i) = \infty, \quad \lim_{i\rightarrow\infty} \mu(i) = 0.
	\label{eq:SS_conditions}
\end{align}
Then, $\w_{k,i}$ generated by \eqref{eq:A}--\eqref{eq:C2} converges in the MSE sense to $w^o$, i.e., 
	\begin{align}
		\lim_{i\rightarrow\infty} \E\|w^o-\w_{k,i}\|^{2} = 0.
		\label{eq:asym_bound_main}
	\end{align}	
	If the step-size sequence satisfies the additional square-summability condition:
	\begin{align}
		\sum_{i=1}^\infty \mu^2(i) < \infty,
		\label{eq:SS_SS_constraint}
	\end{align}
	then $\w_{k,i}$ converges to $w^o$ almost surely (i.e., with probability one) for all $k=1,\ldots,N$. Furthermore, when the step-size sequence is of the form  $\mu(i) = \mu/i$  with $\mu$ satisfying $2\underline{\lambda} \mu > 1$, then the second and fourth-order moments of the error vector converge at the rates of $O(1/i)$ and $O(1/i^2)$, respectively:
	\begin{align}
		\limsup_{i\rightarrow\infty} \frac{\E\|\widetilde{\w}_{k,i}\|^2}{i^{-1}} &\leq \frac{\sigma_{v2}^2 \mu^2}{2\underline{\lambda}\mu - 1} \label{eq:MSD_ind_quadratic}\\
		\limsup_{i\rightarrow\infty} \frac{\E\|\widetilde{\w}_{k,i}\|^4}{i^{-2}} &\leq \textrm{constant} \label{eq:fourth_ind}
	\end{align}
	where $\sigma_{v2}^2$ was introduced in \eqref{eq:noise_var_bound1}.
\end{theorem}
\begin{proof}
See Appendix \ref{app:local_mse_rec}.
\end{proof}\vspace{1\baselineskip}
Observe that \eqref{eq:MSD_ind_quadratic} implies that each node converges in the mean-square-error sense at the rate $1/i$. Combining this result with \eqref{eq:function_diff_bound}, we conclude that each node also converges in excess-risk at this rate:
\begin{align}
	\limsup_{i\rightarrow\infty} \frac{\mathrm{ER}_k(i)}{i^{-1}} &\leq \frac{\mu}{2} \cdot \frac{\bar{\lambda}\mu }{2\underline{\lambda}\mu - 1} \cdot \sigma_{v2}^2
\end{align}
when $2\underline{\lambda}\mu > 1$.

Note that this conclusion does not yet reveal the benefit of cooperation (for example, it does not show how the convergence rate depends on $N$). In the next section, we will derive closed-form asymptotic expressions for the mean-square-error and excess-risk, and from these expressions we will be able to highlight the benefit of network cooperation.

\subsection{Evolution of Excess-Risk Measure}
\label{ssec:asymApprox}
We continue to assume that the step-size sequence is selected as $\mu(i)=\mu/i$ for some $\mu>0$. This sequence satisfies conditions \eqref{eq:SS_conditions} and \eqref{eq:SS_SS_constraint}. Observe that in order to evaluate the excess-risk at node $k$, we must evaluate \eqref{eq:MSE_ind}. To do so, we first form the following network-wide error quantity:
\begin{align}
	\widetilde{\sw}_i &\triangleq \mathrm{col}\{\widetilde{\w}_{1,i},\ldots,\widetilde{\w}_{N,i}\}
\end{align}
and let $E_{kk}$ denote the $N\times N$ matrix with a single entry equal to one at the $(k,k)-$th location and all other entries equal to zero. Then, using \eqref{eq:MSE_ind}, we can write:
\begin{align}
	\textrm{ER}_k(i) \triangleq \E \|\widetilde{\sw}_{i-1}\|^2_{E_{kk}\otimes \bm{S}_{k,i}} \label{eq:def_ER_k}
\end{align}

In order to facilitate the analysis, we introduce the eigenvalue decomposition of matrix $H$:
\begin{align}
	H  = \Phi \Lambda \Phi^\T \label{eq:eigendecomp_H}
\end{align}
where $\Phi$ is an orthogonal matrix and $\Lambda$ is diagonal with positive entries $\lambda_1, \lambda_2, \ldots, \lambda_M$. Moreover, since the matrix $A$ is left-stochastic and primitive (by Assumption \ref{ass:topology}), we can express its Jordan decomposition in the form:
\begin{align}
	A = p \mathds{1}^\T + Y D_{N-1} R^\T 
	\label{eq:A_factorization_split}
\end{align}
where $R,Y \in \mathbb{R}^{N\times N-1}$ represent the remaining left and right eigenvectors while $D_{N-1}$ represents the Jordan structure associated with the eigenvalues inside the unit disc. 
\begin{theorem}[\textbf{Asymptotic Convergence of $\mathrm{ER}_k(i)$}]
	\label{thm:asymptotic_term}
	Let Assumptions \ref{ass:HessianAssumption}-\ref{ass:topology} hold and let $\lambda_{\min}$ denote the smallest eigenvalue of the matrix $H$:
	\begin{align}
		\lambda_{\min} \triangleq \min \{\lambda_1,\ldots,\lambda_M\}
	\end{align}		
	Then, when $2\lambda_{\min} \mu > 1$, it holds asymptotically that
	\begin{align}
	\mathrm{ER}_k(i) &\sim  \frac{\mu}{2 i} \cdot \sum_{m=1}^M  \frac{\lambda_m \mu }{2\lambda_m \mu - 1}\cdot  p^\T  R_{v,m} p \label{eq:asymptotic_theorem_result} \\
					&= \frac{\mu}{2 i} \cdot \sum_{m=1}^M  \frac{\lambda_m \mu }{2\lambda_m \mu - 1} \sum_{\ell=1}^N p_\ell^2 (\Phi^{\T}R_{v,\ell}^o\Phi)_{mm} \label{eq:asymptotic_theorem_result_simple} 
	\end{align}
where the notation $f(i) \!\sim\! g(i)$ implies that $\displaystyle{\lim_{i\rightarrow\infty}} f(i)/g(i) \!=\! 1$. Moreover, $\lambda_m$ is the $m$-th eigenvalue of $H$ and the $N\times N$ matrix $R_{v,m}$ is defined as:
\begin{align}
	R_{v,m} \triangleq \left[\begin{array}{ccc}
	(\Phi^\T R_{v,1}^o \Phi)_{mm} &  &  \\ 
	 & \ddots &  \\ 
	 &  & (\Phi^\T R_{v,N}^o \Phi)_{mm}
	\end{array} \right] \label{eq:R_vm}
\end{align}
where the notation $(X)_{mm}$ denotes the $m$-th diagonal element of the matrix $X$.
\end{theorem}
\begin{proof}
	See Appendix \ref{sec:Proof_Asymptotic}.
\end{proof}
Theorem \ref{thm:asymptotic_term} establishes a closed-form expression for the asymptotic excess-risk of the diffusion algorithm. We observe that the slowest rate at which the asymptotic term converges depends on the smallest eigenvalue of $H$, which is $\lambda_{\min}$, and the constant $\mu$. Interestingly, the only dependence on the topology of the network asymptotically is encoded in the Perron vector $p$ of the combination matrix $A$---i.e., most of the eigen-structure of the topology matrices becomes irrelevant asymptotically and only influences the convergence rate in the transient regime. We will see further ahead that the Perron vector $p$ can be optimized to minimize the excess-risk in the asymptotic regime. It is natural that the transient stage should depend on the network geometry because the networked agents are propagating their information over the entire network. The speed of information propagation over a sparsely connected network is determined by the second largest eigenvalue of the combination matrix \cite{jianshu_part1}, which is influenced by the degree of network connectivity. Our results show, however, that there is an asymptotic regime where the performance of the diffusion strategy can be made invariant to the specific network topology since the Perron vector $p$ can be designed for general connected networks, as we will see further ahead in \eqref{eq:Metropolis_DS}. Finally, we observe that all agents participating in the network will achieve the same asymptotic performance given by the right-hand-side of \eqref{eq:asymptotic_theorem_result} as this asymptotic expression for the excess-risk does not depend on any particular node index $k$ but only on the network-wide quantity $p^\T R_{v,m} p$.

When $\lambda_{\min}$ is not known, and thus it is not clear how to choose $\mu$ to satisfy $2\lambda_{\min}\mu > 1$, it is common to choose a large $\mu$ that forces $2\lambda_{\min}\mu \gg 1$. In this case, we obtain from \eqref{eq:asymptotic_theorem_result_simple} that
\begin{align}
	\mathrm{ER}_k(i) &\approx \frac{\mu}{4 i} \cdot \sum_{\ell=1}^N p_\ell^2 \Tr(R_{v,\ell}^o), \quad\quad\quad[2\lambda_{\min}\mu \gg 1]
\label{eq:MLSP_approx}
\end{align}
This approximation is close in form to the original steady-state performance expression derived for the diffusion algorithm when a constant step-size $\mu$ is used \cite{xiaochuan}. The main difference is that the ``steady-state'' term will now   diminish at the rate $1/i$ when $\mu(i) = \mu/i$ and $2\lambda_{\min}\mu \gg 1$. 

By specializing the previous results to the case $N=1$ (a stand-alone node), we obtain as a corollary the following result for the expected excess-risk that is delivered by the non-cooperative stochastic gradient algorithm \eqref{eq:non_coop}.
\begin{corollary}[\textbf{Stochastic gradient approximation}]
\label{cor:ind}
Consider recursion \eqref{eq:non_coop} and let Assumptions \ref{ass:HessianAssumption}-\ref{ass:noiseModeling} hold with $\mu(i) = \mu/i$ and $2\lambda_{\min} \mu > 1$. Then, the excess-risk approaches asymptotically:
\begin{align}
	\E_\w\{\!J(\w_{i-1})\!\!-\!\!J(w^o)\!\} &\sim \frac{\mu}{2 i} \!\sum_{m=1}^M  \!\!\frac{\lambda_m\mu}{2\lambda_m \mu \!-\! 1} (\Phi^{\T}\!R_{v}^o\Phi)_{mm} \label{eq:asymptotic_individual_result}\\
	&\approx \frac{\mu}{4 i} \cdot \Tr(R_v^o),\quad[2\lambda_{\min}\mu \gg 1]  \label{eq:asymptotic_individual_result_approx}
\end{align}
where
\begin{align}
	R_v^o \triangleq \lim_{i\rightarrow \infty} \E\{\v_{i}(w^o) \v_{i}^\T(w^o)\}
\end{align}
and $\v_i(\cdot)$ is the noise process modeled by Assumption \ref{ass:noiseModeling}.\hfill\qed
\end{corollary}

Observe that \eqref{eq:asymptotic_individual_result}--\eqref{eq:asymptotic_individual_result_approx} are stronger than those in Theorem~\ref{thm:non_asym_convergence_main} since we are not only stating that the convergence rate is $O(1/i)$ but we are also giving the exact constant that multiplies the factor $1/i$. In the next section, we examine the relationship between the derived constant and the network size and noise parameters across the network. Following this presentation, we will utilize our mean-square-error expressions to examine the differences between the diffusion strategy \eqref{eq:A}--\eqref{eq:C2} and the consensus strategy \eqref{eq:consensus}.

\subsection{Benefit of Cooperation}
\label{sec:gain}
\begin{table*}
\caption{Examples of fully-distributed combination rules and their corresponding Perron vector.}
\label{tbl:combination_rules}
\centering
\begin{tabular}{c||c|c}
\hline \hline
\rowcolor[gray]{0.9} Combination Rule & $a_{\ell k}$ & $p_k$ \\\hline
\rule[-1ex]{0pt}{4ex} Average Rule & $\begin{cases}
	\frac{1}{\displaystyle |\mathcal{N}_k|}, & \ell \in \mathcal{N}_k\\
	0, &\textrm{otherwise}
\end{cases}$ & $\frac{\displaystyle |\mathcal{N}_k|}{\displaystyle \sum_{m=1}^N |\mathcal{N}_m|}$ \\\hline
Metropolis Rule \cite{metropolis1953equation} & $\begin{cases}
			\displaystyle \frac{1}{\max\left(|\mathcal{N}_k|, |\mathcal{N}_\ell|\right)}, & \ell \in \mathcal{N}_k, \ell \neq k\\
			1 - \displaystyle \sum_{j\in {\cal N}_k\backslash\{k\}} a_{j k},& \ell = k\\
			0, & \textrm{otherwise}
\end{cases}$ & $\displaystyle \frac{1}{N}$\\\hline
Hastings Rule \cite{hastings1970monte} & $\begin{cases}
			\displaystyle \frac{1}{p_k\cdot \max\left(\frac{|\mathcal{N}_k|}{p_k}, \frac{|\mathcal{N}_\ell|}{p_\ell}\right)}, & \ell \in \mathcal{N}_k, \ell \neq k\\
			1 - \displaystyle \sum_{j\in {\cal N}_k\backslash\{k\}} a_{j k},& \ell = k\\
			0, & \textrm{otherwise}
	\end{cases}$ & $p_k$\\\hline\hline
\end{tabular}
\end{table*}
Up to this point in the discussion, the benefit of cooperation has not yet manifested itself explicitly; this benefit is actually already included in the vector $p$. Optimization over $p$ will help bring forth these advantages. Thus, observe that the expression for the asymptotic term in \eqref{eq:asymptotic_theorem_result} is quadratic in $p$. We can optimize the asymptotic expression over $p$ in order to reduce the excess-risk. We re-write the asymptotic excess-risk \eqref{eq:asymptotic_theorem_result} as:
\begin{align}
	\E\{J_{k}(\w_{k,i-1}) - J_k(w^o)\} \sim \frac{\mu}{2 i}  p^\T Z p \label{eq:benefit_cooperation_asym}
\end{align}
where
\begin{align}
	Z \triangleq \sum_{m=1}^M  \frac{\lambda_m\mu }{2\lambda_m \mu-1}  R_{v,m}
	\label{eq:Z}
\end{align}
Then, we consider the problem of optimizing \eqref{eq:benefit_cooperation_asym} over $p$:
\begin{align}
	\min_{p,A\in\mathbb{A}} \ p^\T Z p \quad\textrm{subject\ to} \ Ap = p,\ \ \mathds{1}^\T p = 1,\ \ p\succ0 \nonumber
\end{align}
where $\mathbb{A}$ denotes the set of left-stochastic and primitive combination matrices that satisfy the network topology structure. It is generally not clear how to solve this optimization problem over both $A$ and $p$. We pursue an indirect route. We first remove the optimization over $A$ and determine an optimal $p$. Subsequently, given the optimal $p^o$, we show that a left-stochastic and primitive matrix $A$ in $\mathbb{A}$ can be constructed. The relaxed problem is:
\begin{align}
	\min_{p} \ p^\T Z p \quad\quad \textrm{subject\ to\ } \ \mathds{1}^\T p = 1,\ \ p \succ 0 \label{eq:quadratic_optimization_p}
\end{align}
whose solution is 
\begin{align}
	p^o = \frac{Z^{-1} \mathds{1}}{\mathds{1}^\T Z^{-1} \mathds{1}} \label{eq:p^o}
\end{align}
It is straightforward to verify that $\mathds{1}^\T p^o = 1$. A combination matrix $A$ that has this $p^o$ as its Perron eigenvector is the Hastings rule \cite{boyd2004fastest,xiaochuan} \cite[Lemma 12.2]{NOW_ML}, which is given by
\begin{align}
	a_{\ell k} = \begin{cases}
			\displaystyle \frac{1}{p^o_k\cdot \max\left(\frac{|\mathcal{N}_k|}{p^o_k}, \frac{|\mathcal{N}_\ell|}{p^o_\ell}\right)}, & \ell \in \mathcal{N}_k, \ell \neq k\\
			1 - \displaystyle \sum_{j\in {\cal N}_k\backslash\{k\}} a_{j k},& \ell = k\\
			0, & \textrm{otherwise}
	\end{cases}
	\label{eq:Metropolis_DS}
\end{align}
It is possible to see that for agent $k$ to implement the Hastings rule, it needs to know its neighborhood $\mathcal{N}_k$ (which is known to agent $k$), as well as the number of neighbors that each of its neighbors has (this information is easily derived from the immediate neighbors), and the Perron vector $p^o$ that the network wishes to obtain. Therefore, the design of the weighting matrix $A$ can be done in a fully distributed manner. Table \ref{tbl:combination_rules} lists three fully-distributed combination rules (combination rules that can be implemented in a fully-distributed manner) and their corresponding Perron vector.

To see the effectiveness of this choice for $p$, we consider the case where $2\lambda_{\min} \mu \gg 1$, so that 
\begin{align}
	Z \approx \frac{1}{2} \textrm{diag}\{\Tr(R_{v,1}^o),\ldots,\Tr(R_{v,N}^o)\} \label{eq:L_approx}
\end{align} 
Substituting \eqref{eq:p^o} into \eqref{eq:benefit_cooperation_asym}, we obtain:
\begin{align}
	\E\{J_{k}&(\w_{k,i-1}) - J_k(w^o)\} \sim \frac{\mu}{2 i}  \cdot \frac{1}{\mathds{1}^\T Z^{-1} \mathds{1}} \label{eq:optimized_ER}\\
	&\stackrel{(a)}{\approx} \frac{\mu}{4 i} \cdot \frac{1}{\sum_{k=1}^N \left(\Tr(R_{v,k}^o)\right)^{-1}},\quad[2\lambda_{\min}\mu\gg 1] \label{eq:N_fold}
\end{align}
where step $(a)$ is due to \eqref{eq:L_approx}. First, we will compare this performance with that of the centralized algorithm \eqref{eq:full_gradient}. To do this, we first establish the following result:
\begin{corollary}[\textbf{Un-weighted Centralized processing}]
\label{cor:cent}
Let Assumptions \ref{ass:HessianAssumption}-\ref{ass:noiseModeling} hold with $\mu(i) = \mu/i$ with $2\lambda_{\min}\mu > 1$. Consider the centralized algorithm \eqref{eq:full_gradient}, which has access to all the data from across the network at each iteration. Then, the excess-risk asymptotically satisfies:
 \begin{align}
	\E_\w\{&J(\w_{i-1})-J(w^o)\} \nonumber\\
	&\sim \frac{\mu}{2 i N^2} \left(\sum_{m=1}^M  \frac{\lambda_m\mu}{2\lambda_m\mu-1} \!\sum_{k=1}^N (\Phi^{\T}R_{v,k}^o\Phi)_{mm}\right) \label{eq:asymptotic_centralized_result}\\
	&\approx \frac{\mu}{4 i N^2} \sum_{k=1}^N \Tr(R_{v,k}^o), \quad\quad\quad\quad [2\lambda_{\min}\mu \gg 1]  \label{eq:asymptotic_centralized_result_approx}
\end{align}
where $J(w) \triangleq \frac{1}{N} \sum_{k=1}^N J_k(w)$.
\end{corollary}
\begin{proof}
The centralized algorithm \eqref{eq:full_gradient} is a special case of the diffusion algorithm when $A = p \mathds{1}_N^\T$, where $p = \frac{1}{N} \mathds{1}_N$, which yields a network that satisfies Assumption \ref{ass:topology}. To see this, consider the diffusion algorithm \eqref{eq:A}--\eqref{eq:C2} with $A= p \mathds{1}_N^\T$:
\begin{align}
\boldsymbol{\psi}_{k,i} &= \w_{k,i-1} - \mu(i) \nabla_w Q(\w_{k,i-1},\x_{k,i}) \label{eq:A_cent}\\
			\w_{k,i}   &= \sum_{\ell=1}^N p_\ell \boldsymbol{\psi}_{\ell,i} \label{eq:C_cent}
\end{align}
First, observe that after the first iteration, the estimates across the network are now uniform since \eqref{eq:C_cent} does not depend on $k$. We can therefore drop the subscript $k$ from $\w_{k,i}$:
\begin{align}
\boldsymbol{\psi}_{k,i} &= \w_{i-1} - \mu(i) \nabla_w Q(\w_{i-1},\x_{k,i}) \label{eq:A_cent2}\\
 			\w_{i}   &= \sum_{\ell=1}^N p_\ell \boldsymbol{\psi}_{\ell,i} \label{eq:C_cent2}
\end{align}
Substituting \eqref{eq:A_cent2} into \eqref{eq:C_cent2}, we obtain:
\begin{align}
 			\w_{i}  &= \sum_{\ell=1}^N p_\ell \left(\w_{i-1} - \mu(i) \nabla_w Q(\w_{i-1},\x_{\ell,i})\right)\nonumber\\
 					&= \sum_{\ell=1}^N p_\ell \w_{i-1} - \mu(i) \sum_{\ell=1}^N p_\ell \nabla_w Q(\w_{i-1},\x_{\ell,i})\nonumber\\
 					&\stackrel{(a)}{=} \w_{i-1} - \mu(i) \sum_{\ell=1}^N p_\ell \nabla_w Q(\w_{i-1},\x_{\ell,i}) \nonumber\\
 					&\stackrel{(b)}{=} \w_{i-1} - \frac{\mu(i)}{N} \sum_{\ell=1}^N \nabla_w Q(\w_{i-1},\x_{\ell,i}) \label{eq:centralized_unweighted}
\end{align}
where step $(a)$ is due to $\mathds{1}^\T p = 1$ and step $(b)$ is obtained by substituting $p = \frac{1}{N} \mathds{1}_N$. Observe that \eqref{eq:centralized_unweighted} is identical to \eqref{eq:full_gradient}, the un-weighted centralized algorithm. Then, using the analysis of the diffusion algorithm in Theorem \ref{thm:asymptotic_term}, we have that
\begin{align}
	\mathrm{ER}_k(i) &\sim \frac{\mu}{2 i} \left(\sum_{m=1}^M  \frac{\lambda_m \mu }{2\lambda_m \mu - 1}\cdot  p^\T  R_{v,m} p \right)\nonumber\\
							   &= \frac{\mu}{2 i N^2} \left( \sum_{m=1}^M  \frac{\lambda_m \mu }{2\lambda_m \mu - 1}  \sum_{\ell=1}^N (\Phi^{\T}R_{v,\ell}^o\Phi)_{mm}\right) \label{eq:ER_k_unweighted_cent}
\end{align}
Since the right-hand-side of \eqref{eq:ER_k_unweighted_cent} does not depend on the agent index $k$ (all agents will achieve the same asymptotic performance), we have that the average excess-risk remains the same:
\begin{align}
	\E_\w\{&J(\w_{i-1})-J(w^o)\} = \frac{1}{N} \sum_{k=1}^N \mathrm{ER}_k(i) \nonumber\\
								&\sim \frac{\mu}{2 i N^2} \left( \sum_{m=1}^M  \frac{\lambda_m \mu }{2\lambda_m \mu - 1}  \sum_{\ell=1}^N (\Phi^{\T}R_{v,\ell}^o\Phi)_{mm}\right)\nonumber\\
								&\stackrel{(a)}{\approx} \frac{\mu}{4 i N^2}  \sum_{\ell=1}^N \Tr(R_{v,\ell}^o),\quad\quad\quad\quad[2\lambda_{\min}\mu \gg 1]
\end{align}
where step $(a)$ is due to \eqref{eq:L_approx}, which is the desired result.
\end{proof} 

Comparing \eqref{eq:N_fold} to \eqref{eq:asymptotic_individual_result_approx} in the special case when $\Tr(R_{v,k}^o) = \Tr(R_{v}^o)$ for all $1\leq k\leq N$, we find that the diffusion algorithm offers an $N$-fold improvement in the excess-risk over the non-cooperative solution. Also, comparing \eqref{eq:N_fold} to \eqref{eq:asymptotic_centralized_result_approx} in this case, we observe that asymptotically the diffusion algorithm achieves the same performance as the centralized algorithm \eqref{eq:full_gradient}. More generally, let us consider the case in which the noise profile is \textit{not} uniform across the agents. We call upon the following inequality:
\begin{align}
	\frac{1}{\sum_{k=1}^N \left(\Tr(R_{v,k}^o)\right)^{-1}} \leq \frac{1}{N^2} \sum_{k=1}^N \Tr(R_{v,k}^o) \label{eq:Harmonic_mean}
\end{align}
which follows from the fact that the harmonic mean of a set of numbers is upper-bounded by their arithmetic mean. Then, we conclude from \eqref{eq:Harmonic_mean} that the excess-risk of the diffusion strategy is upper-bounded by that of the centralized strategy \eqref{eq:full_gradient}, and  equality holds only  when the network experiences a spatially uniform gradient noise profile. This implies that the  diffusion strategy actually outperforms the implementation studied in  \cite{Bianchi}, which uses a doubly-stochastic combination matrix. Furthermore, in this case of non-uniform noise profile and since 
\begin{align}
	\frac{1}{\displaystyle \sum_{k=1}^N \left(\Tr(R_{v,k}^o)\right)^{-1}} \leq \min_{1\leq k\leq N} \Tr(R_{v,k}^o)
\end{align}
we also find that the diffusion strategy continues to outperform the non-cooperative solution, and guarantees that the performance of all nodes that utilize the diffusion strategy will outperform the best performing node in the non-cooperative solution.

On the other hand, the weighted centralized algorithm \eqref{eq:weighted_full_gradient} achieves the same performance as the diffusion strategy since it is a special case of diffusion when $A = \pi\mathds{1}^\T$.
\begin{corollary}[\textbf{Weighted Centralized processing}]
\label{cor:weighted_cent} 
Let Assumptions \ref{ass:HessianAssumption}-\ref{ass:noiseModeling} hold with $\mu(i) = \mu/i$ with $2\lambda_{\min}\mu > 1$. Consider the centralized algorithm \eqref{eq:weighted_full_gradient}, which has access to all the data from across the network at each iteration. Then, the excess-risk asymptotically satisfies:
 \begin{align}
	\E_\w\{&J(\w_{i-1})-J(w^o)\} \nonumber\\
	&\sim \frac{\mu}{2 i} \left(\sum_{m=1}^M  \frac{\lambda_m\mu}{2\lambda_m\mu-1} \!\sum_{k=1}^N \pi_k^2 (\Phi^{\T}R_{v,k}^o\Phi)_{mm}\right) \label{eq:asymptotic_weighted_centralized_result}\\
	&\approx \frac{\mu}{4 i} \cdot \sum_{k=1}^N \pi_k^2 \Tr(R_{v,k}^o), \quad\quad\quad [2\lambda_{\min}\mu \gg 1]  \label{eq:asymptotic_weighted_centralized_result_approx}
\end{align}
where $J(w) \triangleq \frac{1}{N} \sum_{k=1}^N J_k(w)$. 
\end{corollary} 
\begin{proof}
The centralized algorithm \eqref{eq:weighted_full_gradient} is a special case of the diffusion algorithm when $A = \pi \mathds{1}_N^\T$ where $\pi = \mathrm{col}\{\pi_1,\ldots,\pi_N\}$, which yields a network that satisfies Assumption \ref{ass:topology}. The derivations closely mirror that of Corollary \ref{cor:cent} except with $p = \pi$, where $\mathds{1}^\T \pi = 1$, instead of $p = \frac{1}{N} \mathds{1}$. We can then use the result of Theorem \ref{thm:asymptotic_term} to obtain \eqref{eq:asymptotic_weighted_centralized_result} and \eqref{eq:asymptotic_weighted_centralized_result_approx}.
\end{proof}

Comparing \eqref{eq:asymptotic_weighted_centralized_result} with \eqref{eq:asymptotic_theorem_result_simple}, we observe that the diffusion strategy achieves the same performance as the weighted centralized strategy \eqref{eq:weighted_full_gradient} since we are free to let $p_k = \pi_k$. Conversely, it is possible to optimize $\{\pi_k\}$ in the same manner as we did in \eqref{eq:quadratic_optimization_p} to obtain that the optimal choice of $\{\pi_k\}$ would yield an excess-risk performance for the weighted centralized algorithm of:
\begin{align}
	\E_\w\{J(\w_{i-1})-J(w^o)\} &\sim \frac{\mu}{2i} \cdot \frac{1}{\mathds{1}^\T Z^{-1} \mathds{1}} \label{eq:asymptotic_weighted_full_gradient_ER}
\end{align}
where $Z$ was defined earlier in \eqref{eq:Z}, which is identical to the one obtained by the diffusion strategy with the optimized $p$ found in \eqref{eq:optimized_ER}.

The reader may also notice that it is further possible to optimize the asymptotic excess-risk curves \eqref{eq:optimized_ER} and \eqref{eq:asymptotic_weighted_full_gradient_ER} over the initial step-size $\mu$ for the diffusion and weighted centralized strategies. This would entail solving an optimization problem of the form:
\begin{align}
	\min_{2\lambda_{\min}\mu>1}\quad&  \frac{1}{2} \cdot \frac{\mu}{\mathds{1}^\T Z^{-1} \mathds{1}} = \frac{1}{2}\cdot \frac{\mu}{\mathds{1}^\T \!\left(\displaystyle \sum_{m=1}^M \!\! \frac{\lambda_m\mu }{2\lambda_m \mu-1}  R_{v,m}\right)^{\!\!-1} \!\!\mathds{1}} \label{eq:optimization_mu}
\end{align}
While this problem is generally difficult to solve in closed-form, it is evident that the solution is $\mu = 1/\lambda$ when $\lambda_m = \lambda$ for all $1\leq m\leq M$ (which is the case where the asymptotic Hessian matrix $H$ is white). In general, however, the optimization problem can be solved numerically to obtain the optimal rate when $H$ is known or can be estimated (such as in the case of the quadratic cost \eqref{eq:LMS}).

\subsection{Comparison to Consensus Strategies}
\label{sec:consensus}
In this section, we use the performance results derived so far to show that the diffusion strategy \eqref{eq:A}--\eqref{eq:C2} has several advantages over the consensus strategy \eqref{eq:consensus}. Specifically, we will show that, asymptotically, the consensus excess-risk curve is worse than the diffusion excess-risk curve. In addition, we will observe through simulations that the overshoot during the transient phase may be significantly worse for consensus implementations. 

To examine the differences in behavior, we will consider the network excess-risk:
\begin{align*}
	\textrm{ER}(i) \triangleq \frac{1}{N} \sum_{k=1}^N \textrm{ER}_k(i)
\end{align*}

For simplicity, we assume $A$ is symmetric; more generally, we can consider combination policies $A$ that are close-to-symmetric and employ arguments similar to \cite{shine_diffusion_consensus}. The final conclusion will be similar to the arguments given here. We can now establish the following result.
\begin{theorem}[\textbf{Comparing network excess-risks}]
\label{thm:consensus}
Let Assumptions \ref{ass:HessianAssumption}-\ref{ass:topology} hold and  $2 \lambda_{\min} \mu \!>\! 1$ with a symmetric $A$. Then, it holds that the asymptotic network excess-risk achieved by the diffusion strategy \eqref{eq:A}-\eqref{eq:C2} is upper bounded by (and, hence, better than) that achieved by the consensus strategy \eqref{eq:consensus}:
\begin{align}
	\mathrm{ER}^\mathrm{diff}(i) \leq \mathrm{ER}^\mathrm{cons}(i),
	 \quad\quad\textrm{as\ } i\rightarrow\infty \label{eq:consensus_diffusion}
\end{align}
\end{theorem}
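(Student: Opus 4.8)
The plan is to reduce the common excess-risk expression \eqref{eq:Trace_diffusion_consensus} to a scalar sum over the eigenmodes shared by the two dynamics, and then to compare diffusion and consensus mode by mode. Assumption \ref{ass:symmetricA} is what makes this possible: with $A$ symmetric, the block matrices $\A=A\otimes I_M$ and $\mathcal{H}_\infty=I_N\otimes\nabla^2_w J(w^o)$ are each symmetric and commute, so they admit a common orthonormal eigenbasis $\{u_\ell\otimes s_m\}$, where $\{u_\ell\}$ are orthonormal eigenvectors of $A$ (eigenvalues $D_{\ell\ell}\in(-1,1]$) and $\{s_m\}$ are the eigenvectors of the Hessian (eigenvalues $\lambda_m$). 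In this basis every factor $\B_t$—whether $\B_t^{\mathrm{diff}}$ from \eqref{eq:B_diff} or $\B_t^{\mathrm{cons}}$ from \eqref{eq:B_cons}—is symmetric and simultaneously diagonal, so $\prod_{t=j+1}^{i-1}\B_t^\T$ is diagonal with entries $\prod_{t}\gamma_{\ell,m}(t)$, the $\gamma_{\ell,m}(t)$ being the eigenvalues listed in Table \ref{tbl:Diffusion_Consensus}. Since $\mathcal{H}_\infty$ is also diagonal here, the trace in \eqref{eq:Trace_diffusion_consensus} collapses (only the diagonal of $\G$ survives) to
\begin{align*}
	\mathrm{ER}(i) = \frac{1}{2N}\sum_{j=1}^{i-1}\mu^2(j)\sum_{\ell=1}^N\sum_{m=1}^M \lambda_m\Big(\textstyle\prod_{t=j+1}^{i-1}\gamma_{\ell,m}(t)\Big)^2\,(u_\ell\otimes s_m)^\T\G\,(u_\ell\otimes s_m),
\end{align*}
where $(u_\ell\otimes s_m)^\T\G\,(u_\ell\otimes s_m)$ equals $\|s_m\|_{R_v}^2$ for consensus and $D_{\ell\ell}^2\|s_m\|_{R_v}^2$ for diffusion.

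Next I would isolate the dominant mode. For $u_1\propto\mathds{1}$ one has $D_{11}=1$, and Table \ref{tbl:Diffusion_Consensus} then gives the \emph{same} eigenvalue $\gamma_{1,m}(t)=1-\mu(t)\lambda_m$ and the \emph{same} quadratic form $\|s_m\|_{R_v}^2$ for both strategies; hence the $\ell=1$ contribution is identical for diffusion and consensus. The hypothesis $\mu>1/(2\lambda_{\min})$ gives $2\mu\lambda_m>1$ for every $m$ (since $\lambda_m\geq\lambda_{\min}$), placing all modes in the top row of Table \ref{tbl:ConvergenceRates}, so by Theorem \ref{thm:asymptotic_term} this common $\ell=1$ contribution is $\Theta(1/i)$ and dominates the excess-risk (the transient term decays faster, as $i^{-2\lambda_{\min}\mu}$). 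The two strategies therefore share the same leading $\Theta(1/i)$ curve, and the inequality \eqref{eq:consensus_diffusion} is decided entirely by the remaining modes $\ell\geq 2$, for which $|D_{\ell\ell}|<1$.

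Finally, for each mode $\ell\geq2$ I would compare the summed contributions $c^{\mathrm{diff}}_{\ell m}(i)$ and $c^{\mathrm{cons}}_{\ell m}(i)$. Because $|D_{\ell\ell}|<1$, the product $(\prod_{t=j+1}^{i-1}\gamma_{\ell,m}(t))^2$ decays geometrically in $i-1-j$, so the sum over $j$ concentrates on indices $j\approx i$, where $\mu(t)\lambda_m=O(\mu\lambda_m/i)\to0$ and both the consensus factor $D_{\ell\ell}-\mu(t)\lambda_m$ and the diffusion factor $D_{\ell\ell}(1-\mu(t)\lambda_m)$ reduce to $D_{\ell\ell}$. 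Evaluating the resulting geometric sums shows that each surviving step contributes equally, while the diffusion quadratic form carries the extra factor $D_{\ell\ell}^2$ coming from the postmultiplying $\A^\T$ in $\G^{\mathrm{diff}}=\A^\T\R_v\A$ versus $\G^{\mathrm{cons}}=\R_v$; hence
\begin{align*}
	c^{\mathrm{diff}}_{\ell m}(i)\;\sim\; D_{\ell\ell}^2\, c^{\mathrm{cons}}_{\ell m}(i)\;\leq\; c^{\mathrm{cons}}_{\ell m}(i),\qquad \ell\geq 2 .
\end{align*}
Summing the non-negative weights $\lambda_m\|s_m\|_{R_v}^2$ over all $\ell\geq2$ and $m$ then yields $\mathrm{ER}^{\mathrm{diff}}(i)\leq\mathrm{ER}^{\mathrm{cons}}(i)$ as $i\rightarrow\infty$.

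The main obstacle is exactly this last step. Since the dominant $\ell=1$ modes cancel, the sign of the difference is governed by the \emph{subdominant} $\Theta(1/i^2)$ terms, and a naive factorwise (in $j$) bound fails—for $j$ far from $i$ the diffusion per-step factor $|D_{\ell\ell}(1-\mu(t)\lambda_m)|$ can actually exceed the consensus one $|D_{\ell\ell}-\mu(t)\lambda_m|$. The resolution is that these ``bad'' indices are geometrically suppressed by $D_{\ell\ell}^{2(i-1-j)}$, so the comparison must be carried out on the summed constants, and the work lies in showing that the vanishing step-size corrections inside $\prod_t\gamma_{\ell,m}(t)$ leave behind precisely the clean ratio $D_{\ell\ell}^2$; the assumption $\mu>1/(2\lambda_{\min})$ is what guarantees that the shared leading behavior is genuinely $\Theta(1/i)$ so that this second-order comparison determines the result.
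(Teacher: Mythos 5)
Your proposal follows essentially the same route as the paper: the same common eigenbasis $\{r_\ell\otimes s_m\}$ induced by Assumption \ref{ass:symmetricA}, the same mode-by-mode reduction of \eqref{eq:Trace_diffusion_consensus}, the same observation that the Perron mode ($D_{11}=1$) contributes identically to both strategies so that the inequality is decided by the $|D_{\ell\ell}|<1$ modes, and the same identification of the extra $D_{\ell\ell}^{2}$ carried by $\G^{\mathrm{diff}}=\A^\T\R_v\A$. The only real divergence is the final estimate. The paper rewrites each subdominant mode's sum via the Gamma-function identity \eqref{eq:SS_finite_product_scalar} and invokes Lemma \ref{lem:i^{-2}} to sandwich the diffusion sum in $\left[\frac{D_{\ell\ell}^{2}}{\log(D_{\ell\ell}^{-2})},\frac{1}{\log(D_{\ell\ell}^{-2})}\right]i^{-2}$ and the consensus sum in $\left[\frac{1}{\log(D_{\ell\ell}^{-2})},\frac{D_{\ell\ell}^{-2}}{\log(D_{\ell\ell}^{-2})}\right]i^{-2}$, concluding because the diffusion upper bound coincides with the consensus lower bound; you instead argue that the geometric weight $D_{\ell\ell}^{2(i-j)}$ concentrates the sum at $j\approx i$, where the step-size corrections vanish, yielding the exact asymptotic ratio $c^{\mathrm{diff}}_{\ell m}(i)\sim D_{\ell\ell}^{2}\,c^{\mathrm{cons}}_{\ell m}(i)$ (with limiting constants $\frac{D_{\ell\ell}^{2}}{1-D_{\ell\ell}^{2}}i^{-2}$ versus $\frac{1}{1-D_{\ell\ell}^{2}}i^{-2}$, consistent with and sharper than the paper's log bounds). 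Your version buys a cleaner statement of \emph{why} diffusion wins and a sharper constant, at the cost of needing a dominated-convergence justification for interchanging the limit with the sum over $j$; the paper's integral-bound route is clunkier but fully explicit. You also correctly diagnose the trap — a factorwise comparison in $j$ fails because the diffusion per-step factor can exceed the consensus one far from $j=i$ — which is precisely why both proofs must compare the summed constants rather than the summands.
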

\begin{proof}
See Appendix \ref{sec:Proof_diffusion_consensus}.
\end{proof}
Result \eqref{eq:consensus_diffusion} implies that, asymptotically, the curve for the network excess-risk for the diffusion algorithm will be upper-bounded by the curve for the consensus algorithm. Even though both algorithms have the same computational complexity, the diffusion algorithm achieves better performance because it succeeds at diffusing the information more thoroughly through the network. This conclusion mirrors the result found in \cite{shine_diffusion_consensus}, which studied the difference in performance between the diffusion and consensus strategies in the \emph{constant} step-size scenario. We conclude, therefore, that the manner by which the diffusion strategy propagates information through the network continues to outperform the consensus strategy even in the diminishing step-size case.

\section{Application to distributed inference over regression and classification models}
\label{sec:sim}
In order to illustrate our results, we consider two applications related to distributed linear regression and distributed logistic regression (a classification model). In the former case, we will observe that our strategy for choosing the combination weights outperforms the doubly-stochastic weights of \cite{Bianchi}, and outperforms the consensus solution. We will also see that it is possible to select  $J_k(w)$ such that the excess-risk can be interpreted as the Kullback-Leibler (KL) divergence between the true likelihood function and an estimated likelihood function.

\subsection{Quadratic Risk}
\label{ssec:quadratic}
Consider the estimation of an unknown vector $w^o \in \mathbb{R}^{M\times 1}$, from observations:
\begin{align}
	\y_k(i) = \h_{k,i}^\T w^o + \z_k(i) \label{eq:y_k_linear_regression}
\end{align}
where $\z_k(i)$ is zero-mean Gaussian noise with variance $\sigma_{z,k}^2$ and all regressors $\h_{k,i} \in \mathbb{R}^{M \times 1}$ are zero-mean, spatially independent, and identically distributed. This is a common scenario in linear regression. The likelihood function is given by:
\begin{align}
	p(\y_{k}(i) | \h_{k,i}, w) = \frac{1}{\sqrt{2 \pi \sigma_{z,k}^2}} \exp\left(\!-\frac{(\y_{k}(i) \!-\! \h_{k,i}^\T w)^2}{2\sigma_{z,k}^2}\!\right) \label{eq:likelihood}
\end{align}
The maximum-likelihood estimate of $w^o$ is obtained by minimizing the negative log-likelihood function; this step motivates the following choice for the loss function:
\begin{align}
	Q(w, \h_{k,i}, \y_{k}(i)) &\triangleq  (\y_k(i) - \h_{k,i}^\T w)^2 \label{eq:Q_linear_regression}
\end{align}
In this case, the risk function \eqref{eq:J_w} becomes:
\begin{align}
	J_k(w) = \E (\y_k(i) - \h_{k,i}^\T w)^2
\end{align}
and, using \eqref{eq:likelihood}, the excess-risk \eqref{eq:ER_k} is equivalent to measuring the KL divergence:
\begin{align}
	D_{\textrm{KL}}( p(\y_{k}(i) &| \h_{k,i}, w^o)	 || p(\y_{k}(i) | \h_{k,i}, w)) \nonumber\\
	&\triangleq \E\left\{\log\left(\frac{p(\y_{k}(i) | \h_{k,i}, w^o)}{p(\y_{k}(i) | \h_{k,i}, w)}\right)\right\}
\end{align}
The results derived in Sec.\ref{sec:main_results} apply to this definition of excess-risk, and we can therefore expect that the KL divergence curve of the diffusion strategy \eqref{eq:A}--\eqref{eq:C2} will be below that of the consensus algorithm \eqref{eq:consensus}, and that the diffusion strategy will outperform both the non-cooperative solution \eqref{eq:non_coop} and the centralized algorithm \eqref{eq:full_gradient}. 
\begin{figure*}[t!] 
	\centering
    \begin{subfigure}[t]{0.25\textwidth}
        \includegraphics[width=\textwidth]{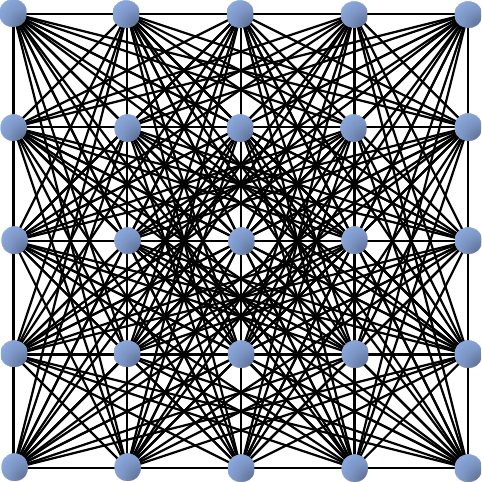}
        \caption{The fully-connected network topology used to obtain the simulation results in Sec.~\ref{ssec:quadratic}.}
        \label{fig:exp1_topology_FC}
    \end{subfigure}%
    \quad\quad
    \begin{subfigure}[t]{0.25\textwidth}
        \includegraphics[width=\textwidth]{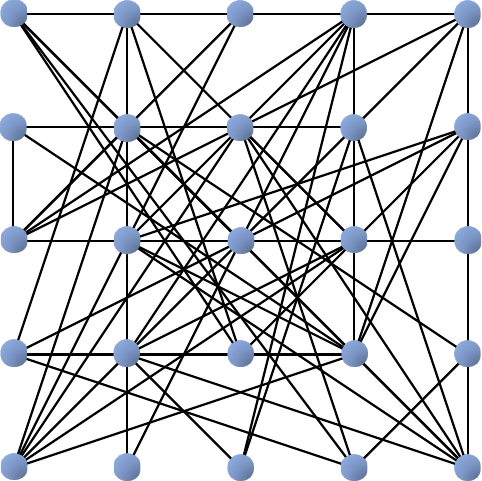}
        \caption{The sparsely-connected network topology used to obtain the simulation results in Sec.~\ref{ssec:quadratic}.}
        \label{fig:exp1_topology_Sparse}
    \end{subfigure}%
    \quad\quad
    \begin{subfigure}[t]{0.25\textwidth}
        \includegraphics[width=\textwidth]{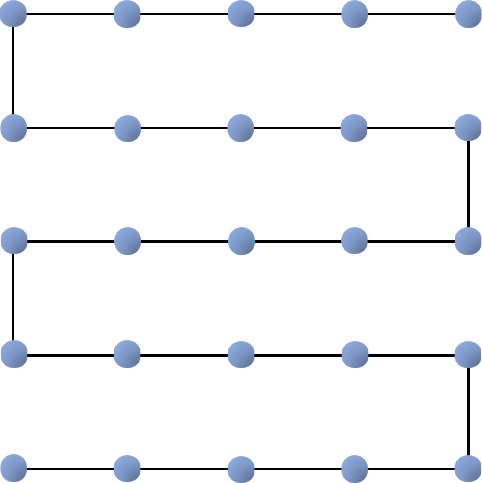}
        \caption{The line-topology used to obtain the simulation results in Sec.~\ref{ssec:quadratic}.}
        \label{fig:exp1_topology_Line}
    \end{subfigure}%
    \\\vspace{1\baselineskip}
    \begin{subfigure}[t]{0.4\textwidth}
        \includegraphics[width=\textwidth]{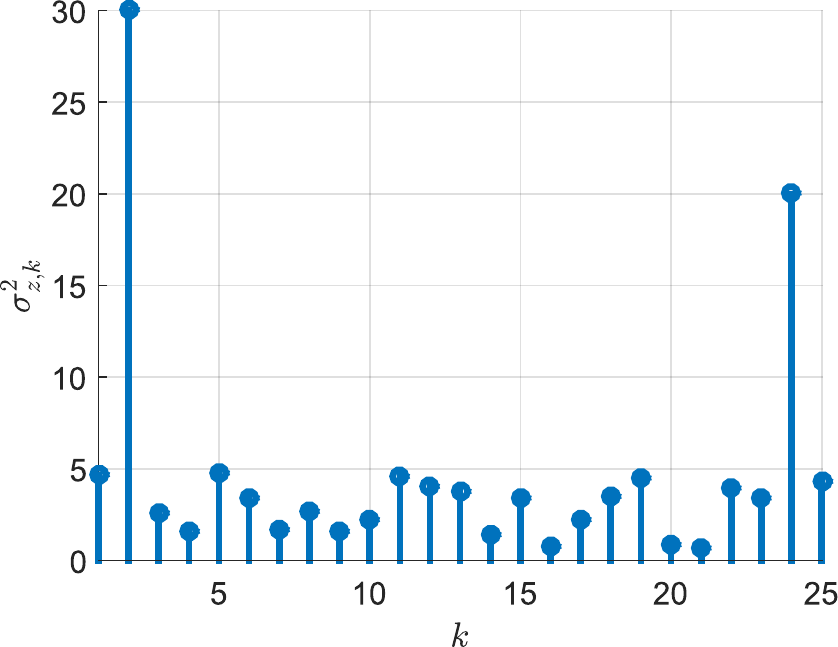}
        \caption{Plot of noise variances across the $50$ nodes in the simulation.}
        \label{fig:noise_variance}
    \end{subfigure}%
    \quad\quad
    \begin{subfigure}[t]{0.4\textwidth}
        \includegraphics[width=\textwidth]{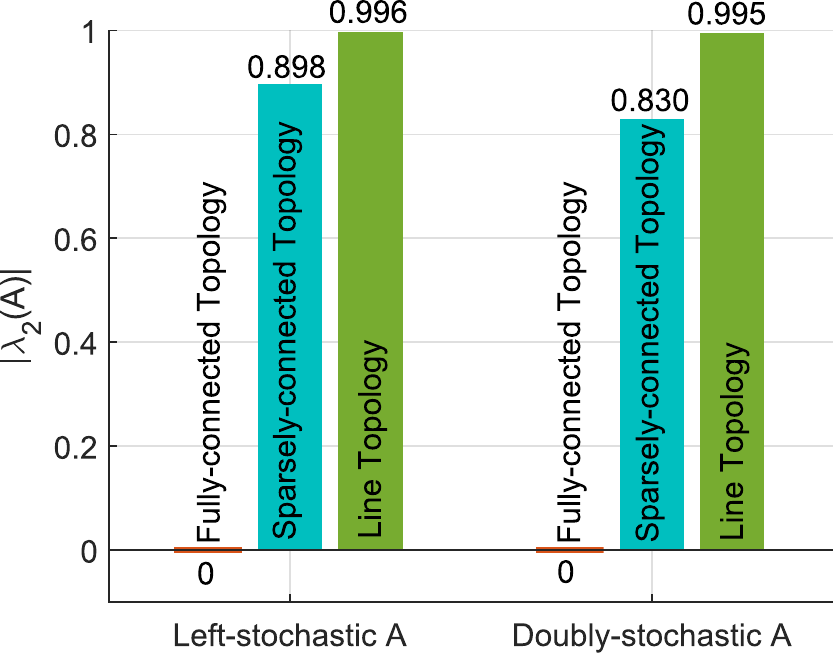}
        \caption{Plot of the second largest eigenvalue of the combination matrices for various topologies. A fully-connected $A$ matrix always has a single non-zero eigenvalue at $1$.} 
        \label{fig:evals_combination}
    \end{subfigure}%
    \caption{Simulation parameters for quadratic risk optimization: (a) The fully-connected network topology, (b) The sparsely-connected network topology, (c) The line-topology used in the simulations, (d) The noise variance at each node, and (e) The eigenvalue spectrum of the combination matrix $A$.}
    \label{fig:simulation_parameters}
\end{figure*} 
\begin{figure*}
\centering
\includegraphics[width=0.8\textwidth]{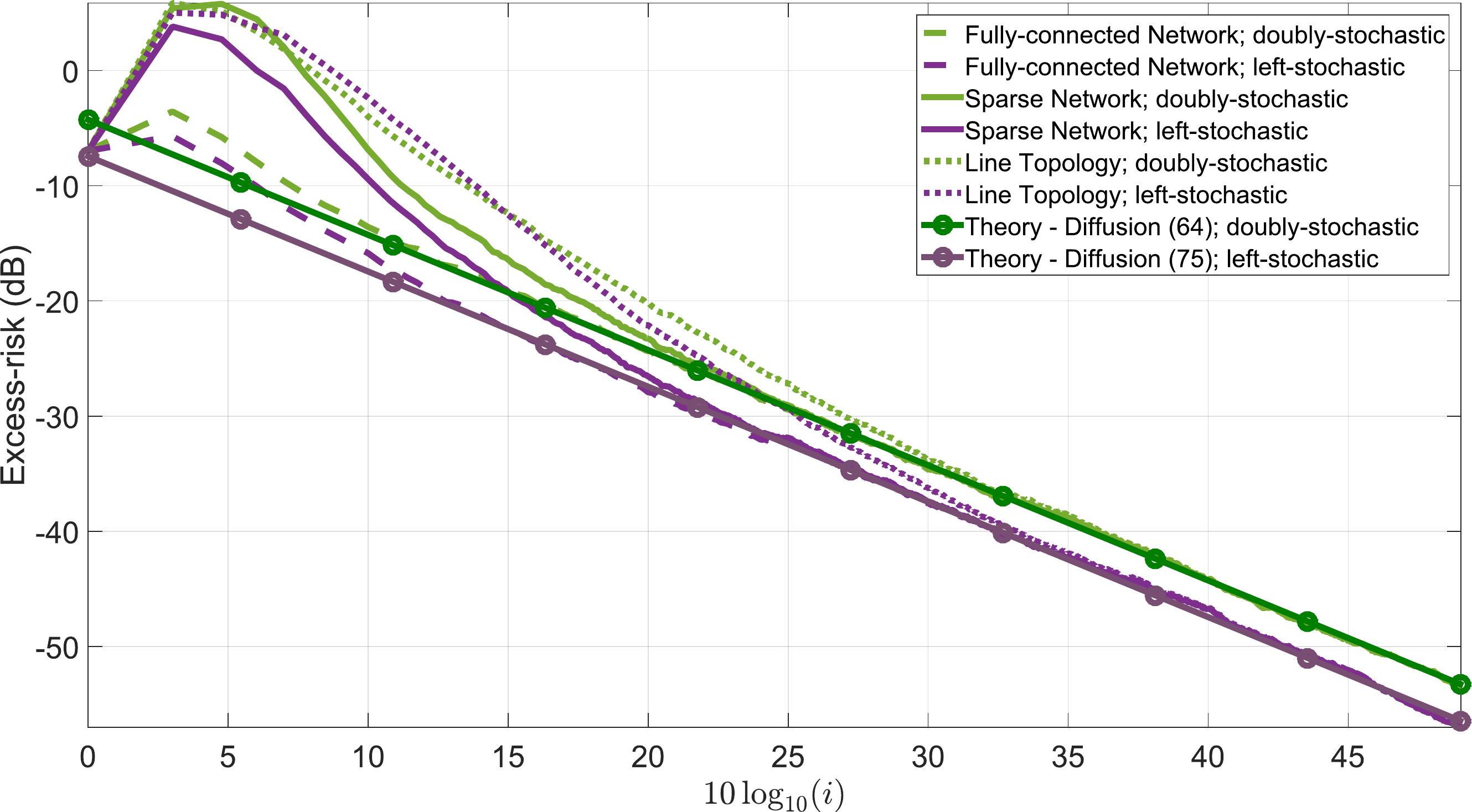}
\caption{Simulation of the diffusion strategy for (1) a fully-connected network; (2) a sparsely-connected network; and (3) a line-topology. The network topologies and simulation parameters are listed in Fig.~\ref{fig:simulation_parameters}. Best viewed in color.}
\label{fig:invariance_to_topology}
\end{figure*}
\begin{figure*}
		\centering
		\begin{subfigure}[t]{0.48\textwidth}
			\includegraphics[width=1\textwidth]{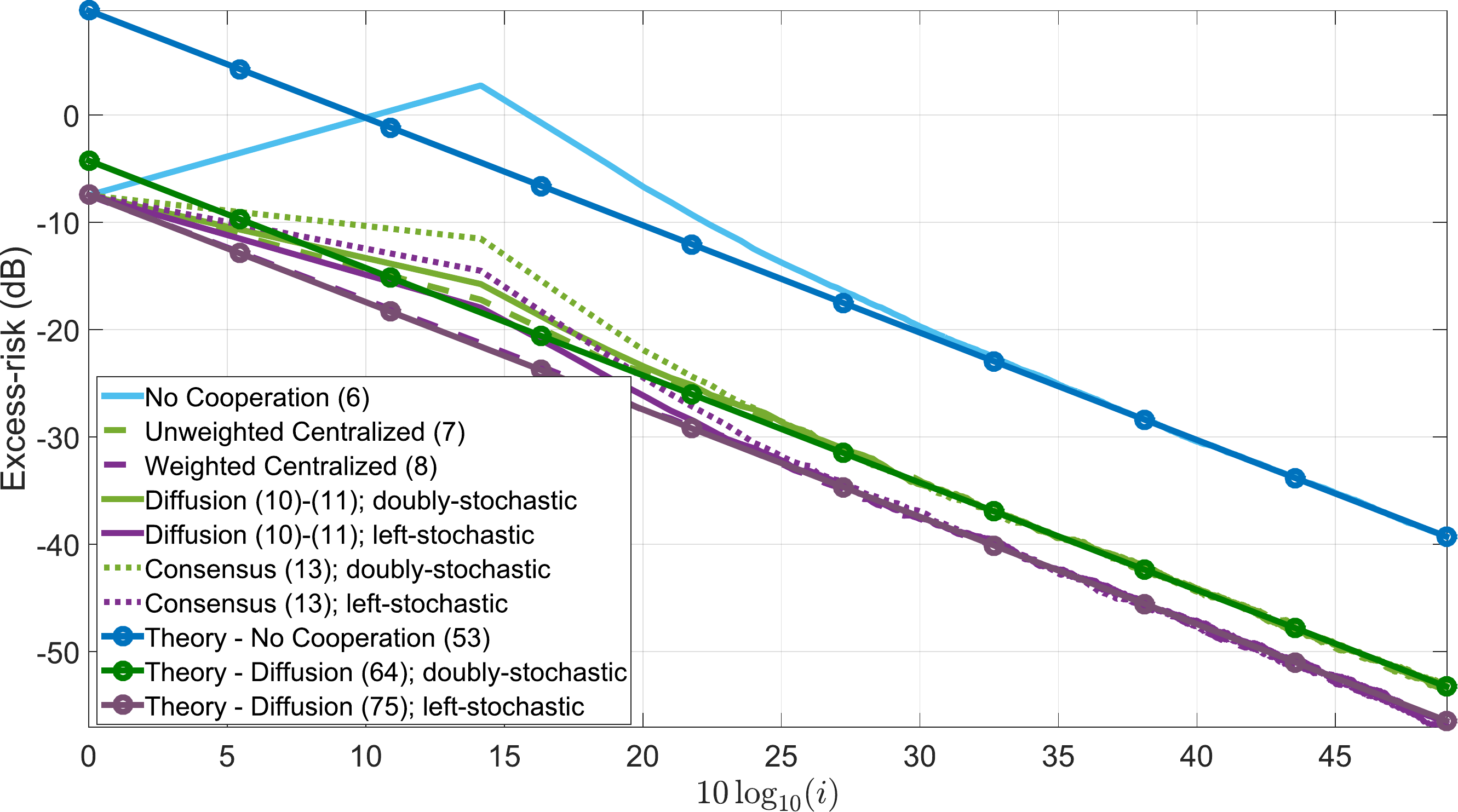}
			\caption{Condition number of the matrix $H$ is $1$. Comparison of non-cooperative, centralized, diffusion, and consensus strategies.}
			\label{fig:cond1}
		\end{subfigure}%
		\quad\quad
		\begin{subfigure}[t]{0.48\textwidth}
			\includegraphics[width=1\textwidth]{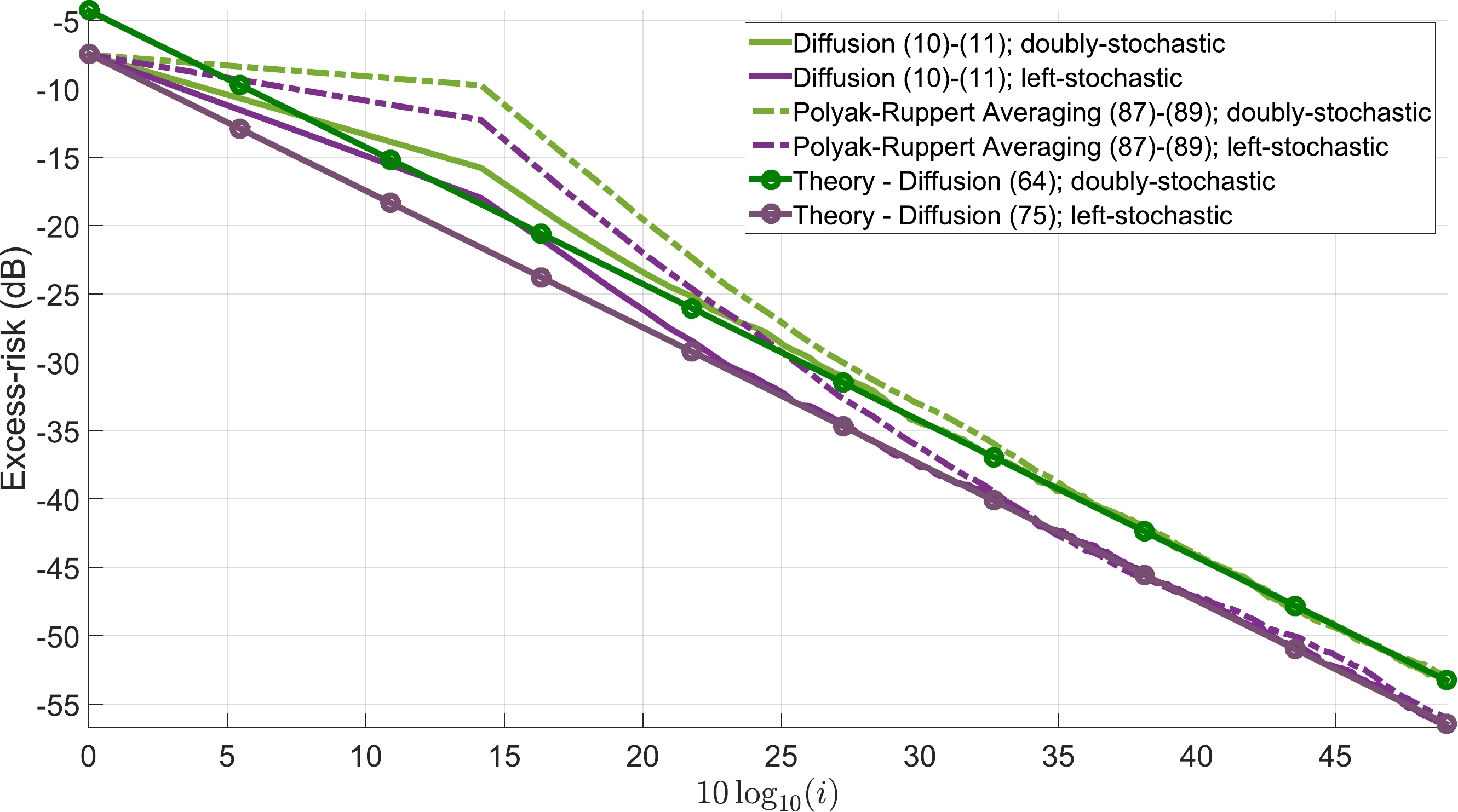}
			\caption{Condition number of the matrix $H$ is $1$. Comparison of non-averaged and averaged diffusion strategies.}
			\label{fig:cond1_pr}
		\end{subfigure}%
		\\ 
		\begin{subfigure}[t]{0.48\textwidth} 
			\includegraphics[width=1\textwidth]{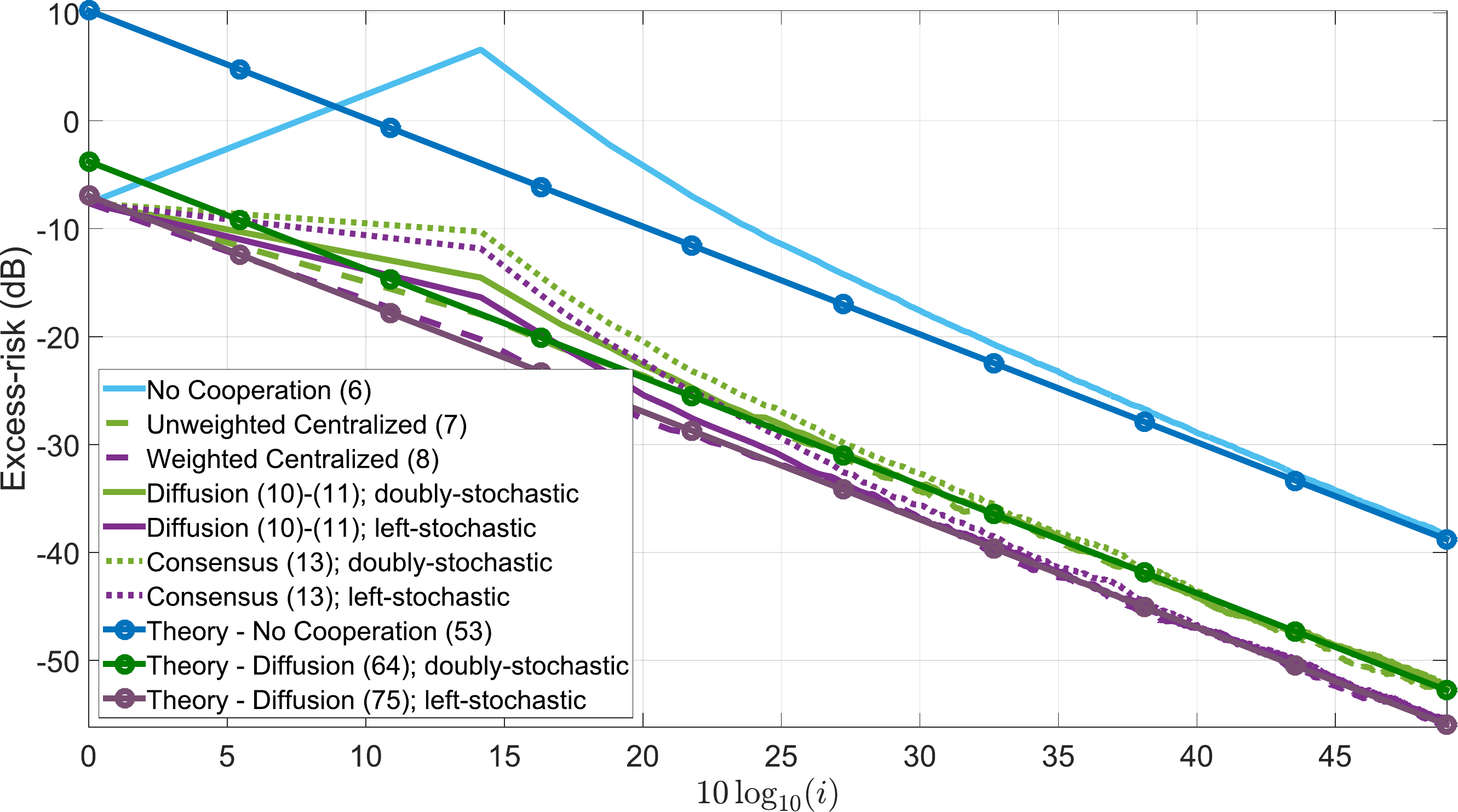}
			\caption{Condition number of the matrix $H$ is $2$. Comparison of non-cooperative, centralized, diffusion, and consensus strategies.}
			\label{fig:cond2}
		\end{subfigure}%
		\quad\quad
		\begin{subfigure}[t]{0.48\textwidth}
			\includegraphics[width=1\textwidth]{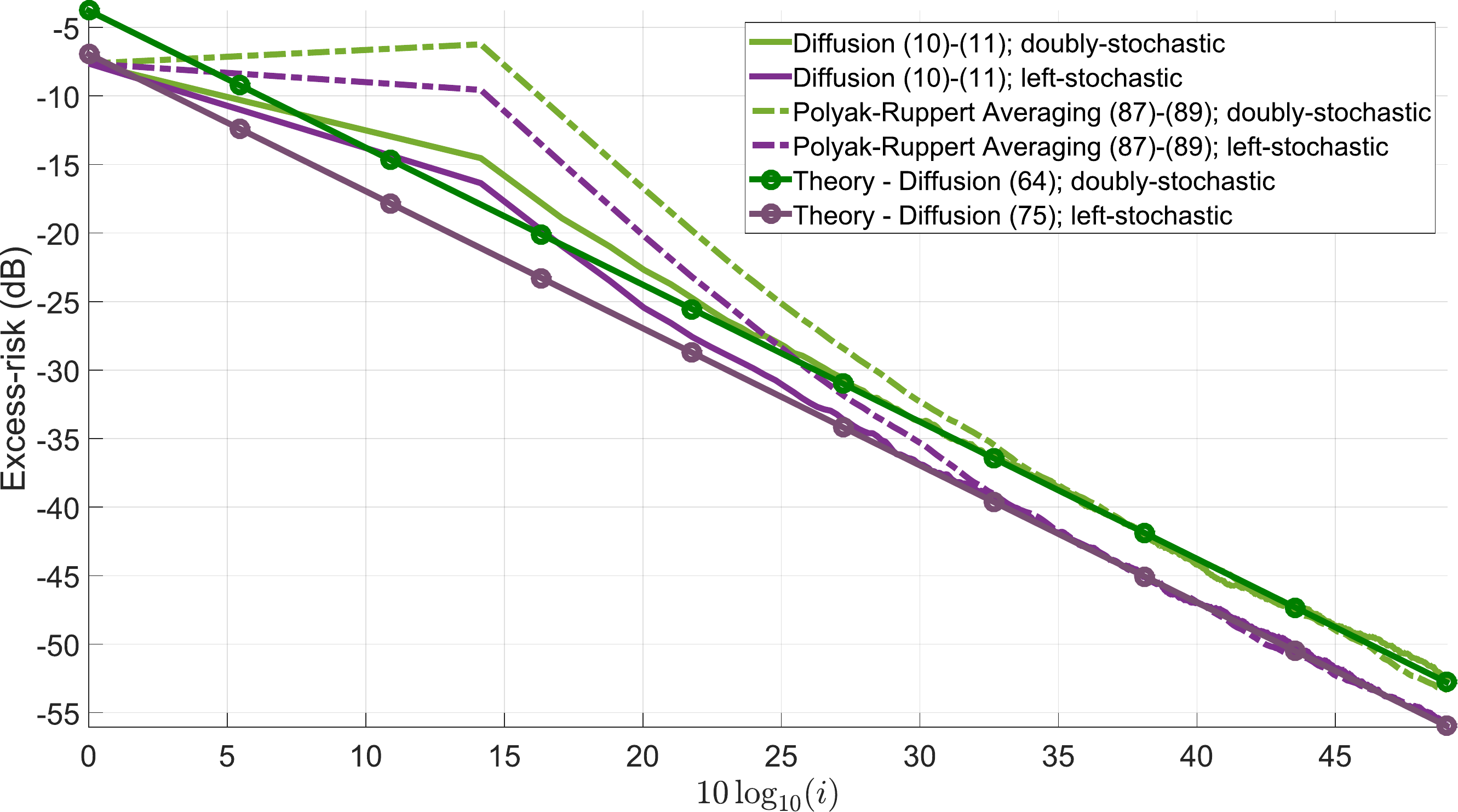}
			\caption{Condition number of the matrix $H$ is $2$. Comparison of non-averaged and averaged diffusion strategies.}
			\label{fig:cond2_pr}
		\end{subfigure}%
		\\
		\begin{subfigure}[t]{0.48\textwidth} 
			\includegraphics[width=1\textwidth]{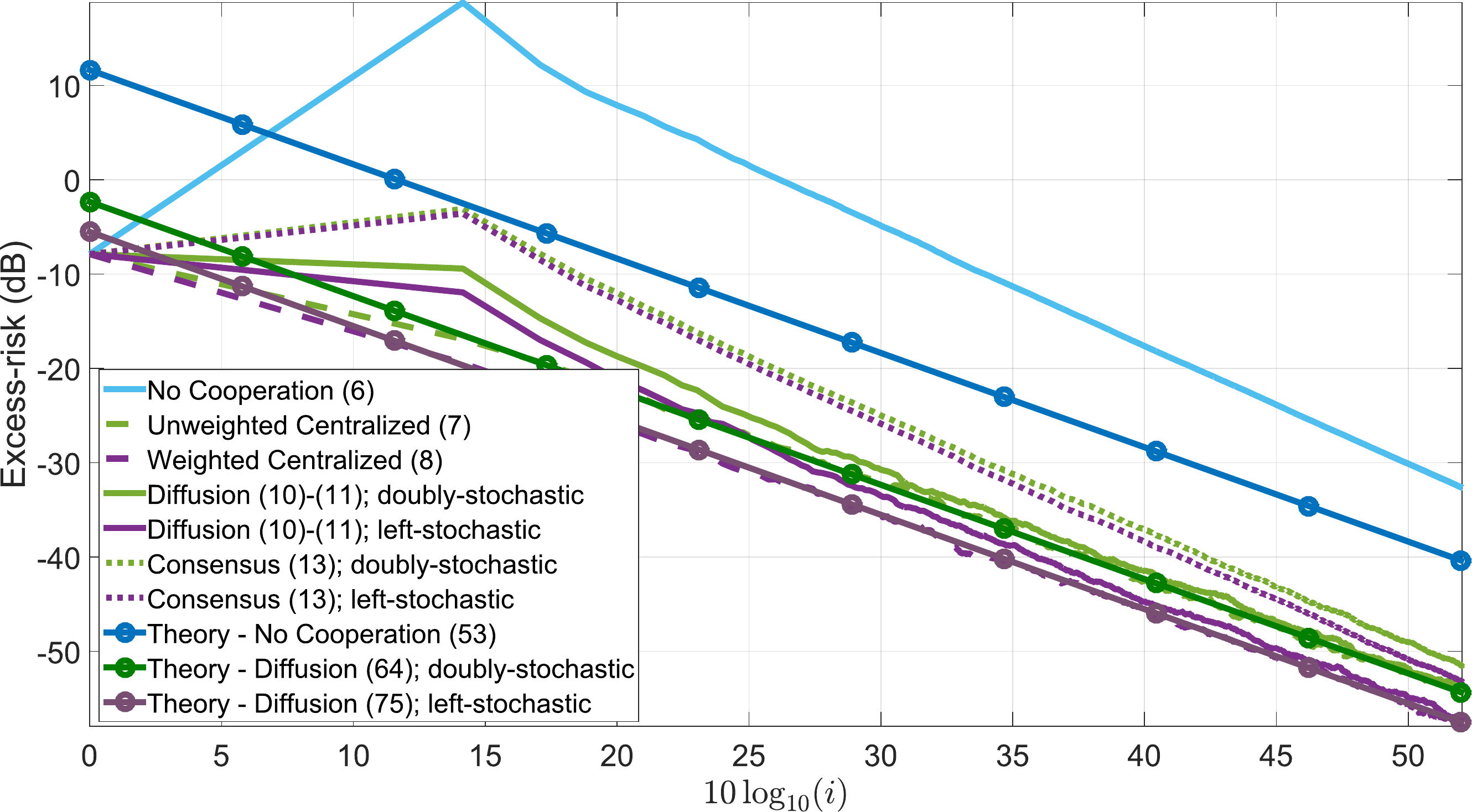}
			\caption{Condition number of the matrix $H$ is $4$. Comparison of non-cooperative, centralized, diffusion, and consensus strategies.}
			\label{fig:cond4} 
		\end{subfigure}%
		\quad\quad
		\begin{subfigure}[t]{0.48\textwidth}
			\includegraphics[width=1\textwidth]{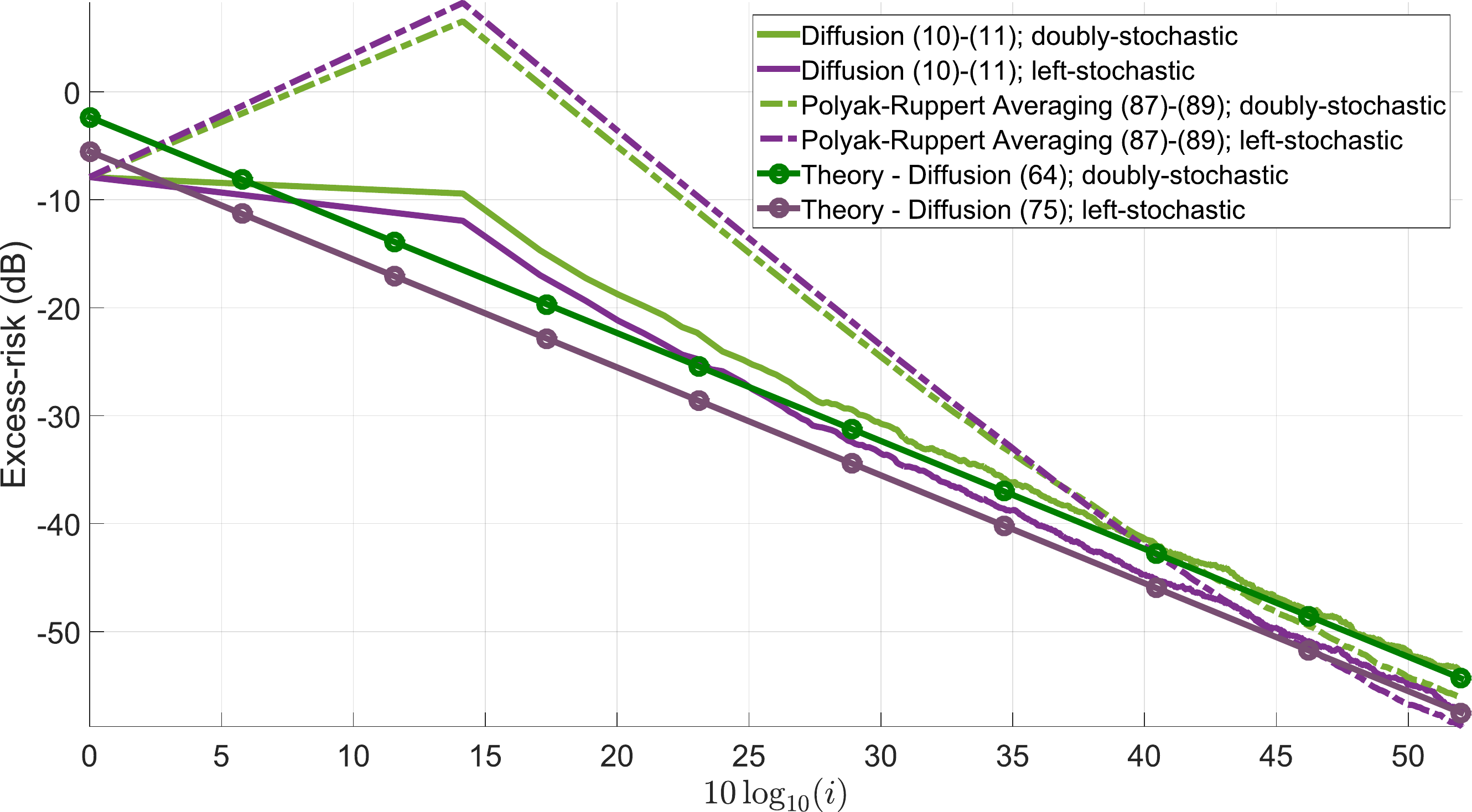}
			\caption{Condition number of the matrix $H$ is $4$. Comparison of non-averaged and averaged diffusion strategies.}
			\label{fig:cond4_pr}
		\end{subfigure}%
		\caption{Comparison between learning curves of non-cooperative processing \eqref{eq:non_coop}, diffusion algorithm \eqref{eq:A}-\eqref{eq:C2}, consensus algorithm \eqref{eq:consensus}, unweighted centralized \eqref{eq:full_gradient}, weighted centralized \eqref{eq:weighted_full_gradient}, and Polyak-Ruppert averaging scheme \eqref{eq:PR_A}--\eqref{eq:PR_AV} for quadratic loss minimization. Best viewed in color.}
		\label{fig:sim_bound_approx}  
	\end{figure*}

We simulate the above results by considering a network with $N=25$ nodes as illustrated in Figs.~\ref{fig:exp1_topology_FC}--\ref{fig:exp1_topology_Line}. The quadratic loss function optimized at each node is \eqref{eq:Q_linear_regression}, where $\h_{k,i}$ is a random vector in $\mathds{R}^{M \times 1}$ and $M=4$ and is a Gaussian random vector with i.i.d. elements and zero mean and unit variance. In addition, the scalar observation $\y_k(i)$ is generated according to \eqref{eq:y_k_linear_regression}, where $\z_k(i)$ is a Gaussian random variable with zero mean and variance $\sigma_{z,k}^2$ as illustrated in Fig.~\ref{fig:noise_variance}. The combination weights are chosen according to the Hastings rule \eqref{eq:Metropolis_DS} using either the optimized Perron vector \eqref{eq:p^o} or the uniform Perron vector that leads to the doubly-stochastic combination matrix $p = \frac{1}{N} \mathds{1}_N$. The magnitude of the second largest eigenvalue of the left- and doubly-stochastic matrices are illustrated in Fig.~\ref{fig:evals_combination}. The step-size was optimized via \eqref{eq:optimization_mu}. Observe that we let two out of the $25$ nodes have particularly bad noise conditions. This should highlight the benefit of using left-stochastic weights as opposed to doubly-stochastic weights. The simulation results are illustrated in Fig.~\ref{fig:sim_bound_approx}. The curves are averaged over $100$ experiments. The network topologies examined in the simulations in this section are illustrated illustrated in Fig.~\ref{fig:exp1_topology_FC}--\ref{fig:exp1_topology_Line}.

\subsubsection{Asymptotic Performance invariance due to network topology}
We first illustrate one of the main conclusions we draw from Theorem \ref{thm:asymptotic_term}, which is that the diffusion strategy's asymptotic performance is independent of the particular network topology. To do this, we simulate the diffusion strategy with three network topologies: (1) Fully-connected (Fig.~\ref{fig:exp1_topology_FC}); (2) Sparsely-connected (Fig.~\ref{fig:exp1_topology_Sparse}); and (3) Line topology (Fig.~\ref{fig:exp1_topology_Line}). We utilize the Hastings rule \eqref{eq:Metropolis_DS} to obtain both left-stochastic and doubly-stochastic combination matrices by setting the desired Perron vector $p$ to the optimal vector \eqref{eq:p^o} or $\frac{1}{N} \mathds{1}_N$. Such combination matrices can be generated for any connected network topology. The eigen-spectrum of the resulting combination matrices for each topology are illustrated in Fig.~\ref{fig:evals_combination}. We observe that as the network becomes better connected, the second-largest eigenvalue is reduced. This eigen-value generally determines the convergence speed of the transient terms.

Figure~\ref{fig:invariance_to_topology} illustrates the simulation results for all of the simulated network topologies. We observe that regardless of the network topology, the diffusion strategy converges to the curve described by \eqref{eq:asymptotic_theorem_result}, albeit at different speeds. The fully-connected network case is theoretically equivalent to the centralized strategies \eqref{eq:full_gradient} and \eqref{eq:weighted_full_gradient}, depending on the vector $p$ used. We conclude, therefore, that the diffusion strategy is invariant to the particular network topology and its asymptotic excess-risk curve only depends on the Perron vector of the combination matrix, which can be freely designed for connected networks.

\subsubsection{Performance comparisons}

We now simulate the diffusion strategy as well as other algorithms discussed in the manuscript. In addition to the non-cooperative \eqref{eq:non_coop}, diffusion \eqref{eq:A}--\eqref{eq:C2}, consensus \eqref{eq:consensus}, unweighted centralized \eqref{eq:full_gradient}, and weighted centralized \eqref{eq:weighted_full_gradient} strategies, we also examine the performance of non-cooperative and distributed Polyak-Ruppert (PR) averaging strategies. The non-cooperative PR algorithm can be described as:
\begin{align}
	\boldsymbol{\phi}_{k,i} &= \boldsymbol{\phi}_{k,i-1}\!-\!\mu(i) \nabla_{\!w} Q(\!\boldsymbol{\phi}_{k,i-1},\!\x_{k,i}\!) & &[\textrm{adaptation}]\\
	\w_{k,i}  &= \w_{k,i-1} + \frac{1}{i} \left(\boldsymbol{\phi}_{k,i} - \w_{k,i-1}\right)& &[\textrm{averaging}]
\end{align}
which is simply the non-cooperative algorithm \eqref{eq:non_coop} operating independently, and a time-averaging process of the iterates to smooth the estimates. On the other hand, the distributed version that we implement for comparison takes the form:
\begin{align}
			\boldsymbol{\psi}_{k,i} \!&=\! \boldsymbol{\phi}_{k,i-1} \!-\! \mu(i) \nabla_w Q(\boldsymbol{\phi}_{k,i-1} ,\x_{k,i}) & &\!\textrm{[adaptation]} \label{eq:PR_A}\\
			\boldsymbol{\phi}_{k,i} \!&=\! \sum_{\ell=1}^N a_{\ell k} \boldsymbol{\psi}_{\ell,i} & &\!\textrm{[aggregation]} \label{eq:PR_C2}\\
			\w_{k,i}  &= \w_{k,i-1} + \frac{1}{i} \left(\boldsymbol{\phi}_{k,i} - \w_{k,i-1}\right)  & &\!\textrm{[averaging]} \label{eq:PR_AV}
\end{align} 
where the combination matrix with elements $\{a_{\ell k}\}$ is the same as the one used for the diffusion and consensus strategies.

It has been shown for single-agent stochastic approximation algorithms \cite{Bach_NIPS,polyak1992acceleration,Nemirovski,shalev2009stochastic} that the  averaging operation allows the algorithm to be robust to loss of strong-convexity in the risk function (the algorithm will continue to converge at the fastest possible rate for the non-strongly-convex problem, albeit slower than the rate it would have achieved for the strongly-convex problem) in addition to 
 achieving the best possible rate regardless of the eigen-structure of the Hessian matrix $H$ (which is important when $H$ is ill-conditioned). Unfortunately, these algorithms have also been shown to be sensitive to initialization \cite{Bach_NIPS} in that transient terms in their excess-risk curve decay at a relatively slow rate of $1/i^2$---in comparison to stochastic gradient descent, which forgets the initialization at a rate of $1/i^{2\lambda_{\min}\mu}$ asymptotically (Theorem \ref{thm:transient_term}), which can be made to decay faster with the choice of $\mu$, and faster in the earlier stages \cite{Bach_NIPS}. In addition, the step-size sequence $\mu(i)$ for the PR algorithms must decay slower than $1/i$; for example $\mu(i) = \mu/i^{2/3}$. Various methods have been suggested for avoiding such drawbacks such as to start the averaging process after some number of iterations \cite{kushner2003stochastic} or selecting a small initial step-size $\mu$ to reduce the size of the over-shoot. Unfortunately, it is usually not clear how many iterations should elapse before averaging starts. This is still a useful research direction to explore. In our simulations we use $\mu(i) = \mu/i^{2/3}$ as suggested in \cite{Bach_NIPS} for the PR averaging strategies.

The curves illustrate that the difference in performance between non-cooperative processing \eqref{eq:non_coop} and the diffusion algorithm with doubly-stochastic weights (i.e., the algorithm studied in \cite{Bianchi}) is about $14$dB ($10\log_{10}(N)$). We also observe that \eqref{eq:A}-\eqref{eq:C2} achieves $10$dB per decade decay in simulation ($1/i$ rate), and that the diffusion strategy with left-stochastic weights outperforms the un-weighted centralized solution \eqref{eq:full_gradient} and the doubly-stochastic implementation. In comparison to the consensus algorithm \eqref{eq:consensus}, the diffusion algorithm \eqref{eq:A}-\eqref{eq:C2} is seen to have better transient performance, and the  network excess-risk for the consensus strategy remains higher than that for diffusion regardless of the combination matrix used (as predicted by Theorem \ref{thm:consensus}). In addition, we observe that the left-stochastic diffusion algorithm approaches the same performance curve as the weighted centralized strategy \eqref{eq:weighted_full_gradient} as predicted by theory. 

We also observe that the Polyak-Ruppert averaging diffusion scheme exhibits worse performance than the non-averaged case when the condition number of $H$ is unity. This is due to the poor transient performance of the averaged algorithm \cite{Bach_NIPS,bottou2012stochastic}. However, as the condition number of $H$ worsens, the averaged diffusion algorithm outperforms the standard diffusion scheme.

\subsection{Regularized Logistic Regression}
\label{ssec:logistic}
\begin{figure*}
	\centering
	\begin{subfigure}[t]{0.25\textwidth}
			\centering
			\includegraphics[width=1\textwidth]{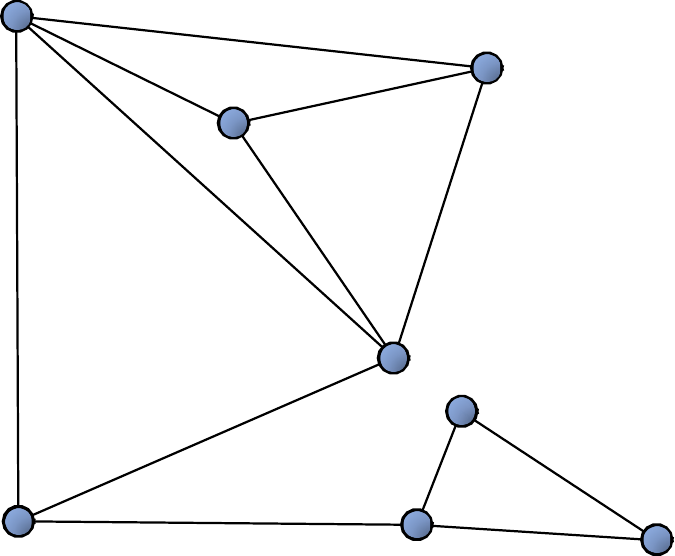}
			\caption{Network topology used to obtain the simulation results in Sec.~\ref{ssec:logistic}.}
			\label{fig:topology_RLR}
		\end{subfigure}%
	\quad\quad\quad
	\begin{subfigure}[t]{0.4\textwidth}
			\centering
			\includegraphics[width=1\textwidth]{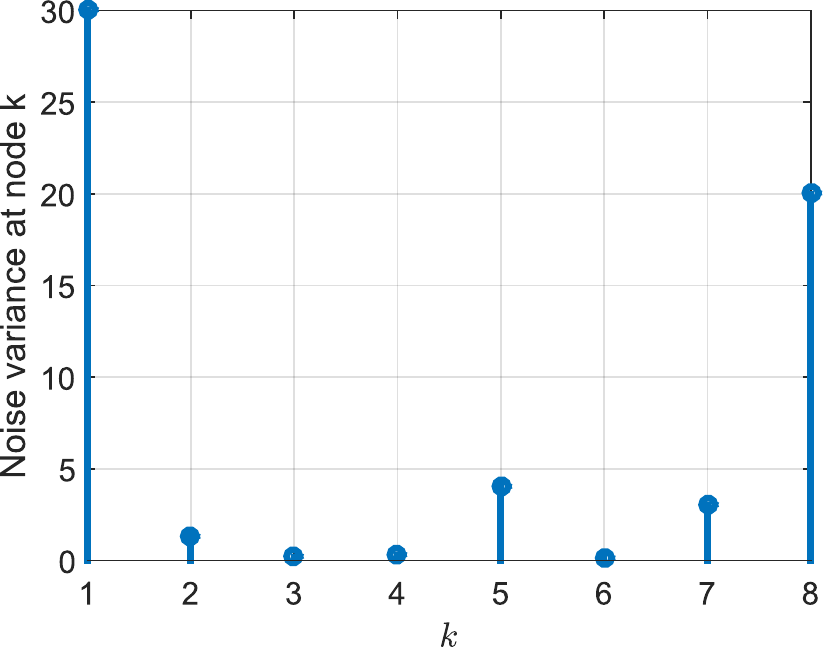}
			\caption{Plot of noise variances across the $8$ nodes in the simulation.}
			\label{fig:noise_variance_RLR}
		\end{subfigure}%
		\\\vspace{1\baselineskip}
		\begin{subfigure}[t]{0.75\textwidth}
			\centering
			\includegraphics[width=1\textwidth]{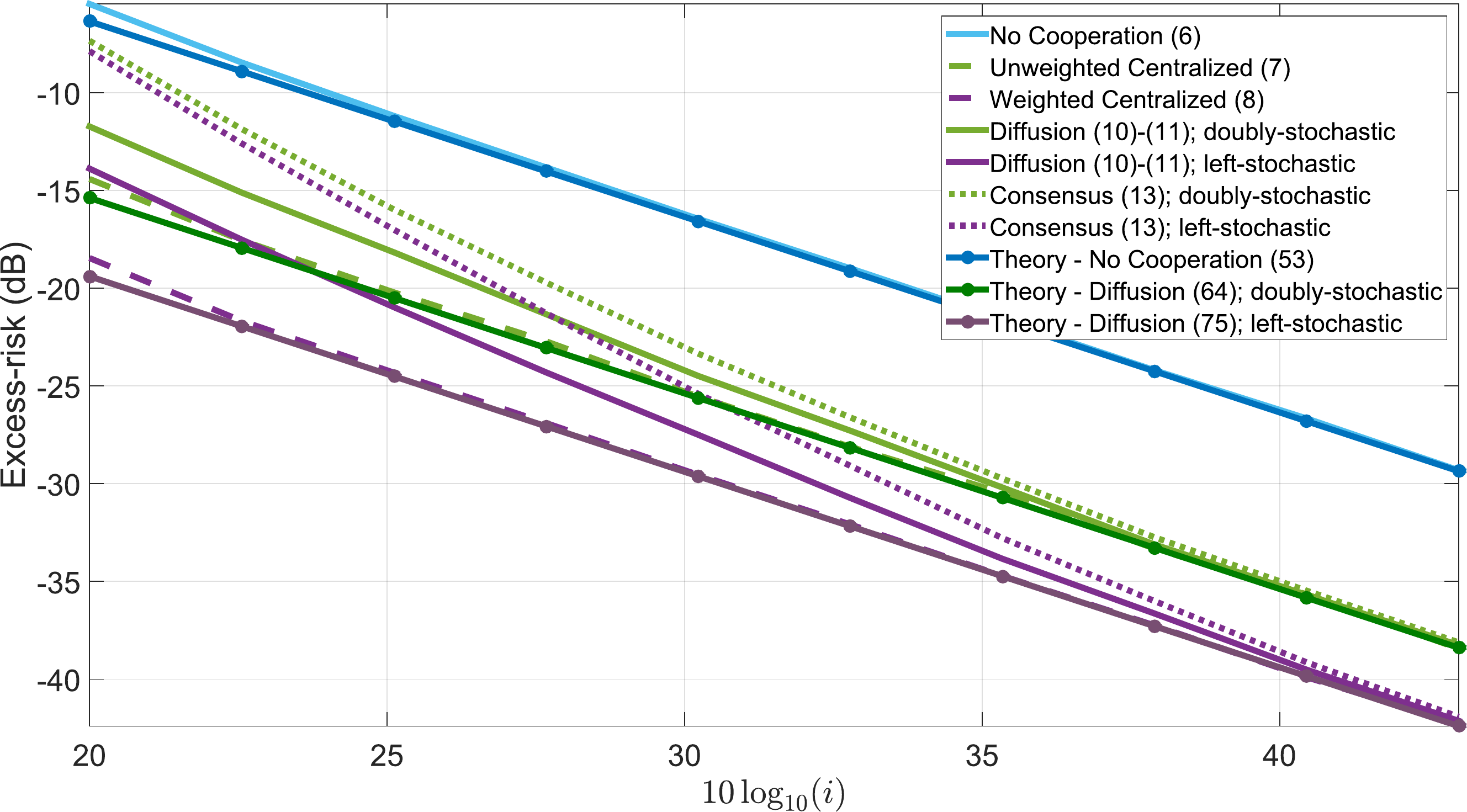}
			\caption{Regularized KL divergence attained by nodes that utilize the non-cooperative
algorithm \eqref{eq:non_coop}, the unweighted centralized algorithm \eqref{eq:full_gradient}, the weighted centralized algorithm \eqref{eq:weighted_full_gradient}, the diffusion strategy \eqref{eq:A}-\eqref{eq:C2}, and consensus strategy \eqref{eq:consensus} with optimized weights \eqref{eq:p^o} and with doubly-stochastic weights ($p = \frac{1}{N}\mathds{1}_N$). Theoretical curves are from \eqref{eq:asymptotic_individual_result_approx}, \eqref{eq:N_fold} and \eqref{eq:asymptotic_centralized_result_approx}. Curves are averaged over $2000$ experiments.}
			\label{fig:simulation}
		\end{subfigure}%
	\caption{Plot of the simulation parameters and results for the regularized logistic regression simulation.}
	\label{fig:RLR_sim}
\end{figure*}  
 
Now consider an application where the dependent variable $\y_{k}(i)$ is binary (i.e, it assumes values from the set $\{+1,-1\}$) with  $\h_{k,} \in \mathbb{R}^{M\times 1}$. The log-odds function is assumed to obey the  linear model \cite[p.~117]{Theodoridis}:
\begin{align}
	\log\left(\frac{p(\y_{k}(i) | \h_{k,i}; w)}{1-p(\y_{k}(i) | \h_{k,i}; w)}\right) = \y_{k}(i)\h_{k,i}^\T w
\end{align}
Solving for $p(\y_{k,i} | \h_{k,i}; w)$, we have that the likelihood is given by
\begin{align}
	p(\y_{k}(i) | \h_{k,i};  w) = \frac{1}{1+e^{-\y_{k}(i) \h_{k,i}^\T w}}
\end{align}
Then, the maximum-likelihood estimate of $w^o$ minimizes the negative log-likelihood function:
\begin{align}
	Q(w,\h_{k,i}, \y_{k}(i)) = \log\left(1+e^{-\y_{k}(i) \h_{k,i}^\T w}\right)
\end{align}
Observe that $Q(w,\h_{k,i},\y_k(i))$ is not strongly-convex. In order to alleviate this difficulty, it is possible to add a small regularizer so that \cite{zhu2009maximum,lafferty2001boosting}:
\begin{align}
	Q(w,\h_{k,i}, \y_{k}(i)) = \frac{\rho}{2} \|w\|^2 + \log\left(1+e^{-\y_{k}(i) \h_{k,i}^\T w}\right)
\end{align}
and the excess-risk then has the interpretation of being the \emph{regularized} KL divergence:
\begin{align}
	D_{\textrm{RKL}}&\big(p(\y_{k}(i) | \h_{k,i};  w^o)\parallel p(\y_{k}(i) | \h_{k,i}; w)\big) \nonumber\\
	&\triangleq \frac{\rho}{2}\left(\|w\|^2 \!-\! \|w^o\|^2\right) + \E \left\{\log\left(\frac{p(\y_{k}(i) | \h_{k,i}; w^o)}{p(\y_{k}(i) | \h_{k,i};  w)}\right)\right\}
	\label{eq:KL_regularized}
\end{align}

We generate a random adhoc network of $N=8$ nodes (illustrated in Fig.~\ref{fig:topology_RLR}) where each node samples $M=2$ dimensional feature vectors from a Gaussian mixture with two components and probability density function $\h_{k,i} \sim \frac{1}{2} \mathcal{N}(2\mathds{1},I_M) + \frac{1}{2} \mathcal{N}(-2\mathds{1},I_M)$, where $\mathcal{N}(\gamma,\Sigma)$ denotes the multivariate Gaussian density function with mean vector $\gamma$ and covariance matrix $\Sigma$. The labels $\y_{k}(i)$ are generated according to
\begin{align}
	p(\y_{k}(i) | \h_{k,i};  w^o) = \frac{1}{1+e^{-\y_{k}(i) \h_{k,i}^\T w^o}}
	\label{eq:logistic_likelihood}
\end{align}
where $w^o$ is chosen arbitrary at the start of each experiment. Each node performs the diffusion algorithm listed in \eqref{eq:A}-\eqref{eq:C2} by computing the gradient according to the instantaneous approximation of the strongly-convex regularized KL divergence \eqref{eq:KL_regularized}. Since in this scenario, the noise profile is uniform across the agents, the optimal weights \eqref{eq:p^o} become $p^o = \frac{1}{N} \mathds{1}_N$, which leads to a doubly-stochastic combination policy, $A$. In order to simulate a scenario involving a left-stochastic combination rule, we modify the instantaneous approximations for the gradient vectors across the agents by adding a small additive noise component to these approximations at each agent with varying power profile shown in Fig. \eqref{fig:noise_variance_RLR}. These noise perturbations help model unknown effects or even round-off errors. 

The regularization constant $\rho$ is chosen to be $5$ in the simulation since it will determine $\lambda_{\min}$ and in order to ensure that our approximation in \eqref{eq:N_fold} is valid, we choose $\mu$ to be $5$. The optimizer is found by optimizing the sum of the regularized KL divergences and using empirical average for the expectation.
  
Figure~\ref{fig:simulation} shows the resulting curves, which are averaged over $500$ experiments (with a different random network topology per experiment). We plot the performance of the non-cooperative algorithm along with its theoretical performance (both averaged over the nodes). In addition, we show the performance of the \emph{centralized} algorithms \eqref{eq:full_gradient} and \eqref{eq:weighted_full_gradient} along with their theoretical performance obtained from the right-hand-side of \eqref{eq:asymptotic_centralized_result_approx} and \eqref{eq:asymptotic_weighted_centralized_result_approx}. We see that the diffusion algorithm outperforms the non-cooperative algorithm and matches the centralized algorithm listed in \eqref{eq:full_gradient} when the combination policy $A$ is doubly-stochastic, as predicted by Theorem~\ref{thm:asymptotic_term}. We also observe that when $A$ is chosen to be left-stochastic, the diffusion strategy outperforms \eqref{eq:full_gradient}, as predicted by \eqref{eq:Harmonic_mean} and approaches to same performance as the weighted centralized algorithm \eqref{eq:weighted_full_gradient}, as expected.

\section{Conclusions and Future Work}
We performed a detailed mean-square-error analysis of the asymptotic convergence behavior of distributed strategies of the consensus and diffusion types. We established that the algorithms converge asymptotically at the rate of $O(1/i)$ and, more importantly, derived the exact constant that multiplies the convergence rate factor $1/i$. By using the derived expressions, we were able to compare the performance of various implementations against each other including non-cooperative strategies, centralized strategies, and distributed consensus and diffusion strategies. The analysis revealed that diffusion implementations outperform the other strategies and deliver the best excess-risk performance asymptotically as centralized strategies; we also showed how to optimize this performance over the choice of combination weights. We showed that the optimal weights are left-stochastic rather than doubly-stochastic and derived an expression for them in terms of the Hastings rule. We also observed that the diffusion implementation leads to smaller overshoot during the transient phase in relation to the other strategies in simulations. It was also demonstrated that the asymptotic performance of the difusion strategy is independent of the particular network topology that connects the agents. The analysis in this work was done under the assumption of stationary data distributions, in which case decaying step-sizes are justified. If the statistical distribution of the data drifts with time, then constant step-sizes need to be employed. In this case, the dynamics of the learning network is modified in one important way in that the gradient noise does not die out anymore (it is annihilated by decaying step-sizes but not by constant step-sizes). The performance of the networks in the constant step-size regime is studied in \cite{NOW_ML,SayedProcIEEE,jianshu_part1,jianshu_part2}. A useful extension of this work is to study the Polyak-Ruppert averaging version \eqref{eq:PR_A}--\eqref{eq:PR_AV} for robustness against ill-conditioning and lack of strong-convexity.

\appendices

\section{Proof of Theorem \ref{thm:non_asym_convergence_main}}
\label{app:local_mse_rec}
We denote the error vectors at node $k$ at time $i$ by $\tilde{\bpsi}_{k,i} \triangleq w^o - \bpsi_{k,i}$
and $\widetilde{\w}_{k,i}    \triangleq w^o -\w_{k,i}$. We subtract \eqref{eq:A}-\eqref{eq:C2} from $w^o$ using \eqref{eq:grad_model} to get
\begin{align}
	\tilde{\bpsi}_{k,i} &= \widetilde{\w}_{k,i-1} \!+\! \mu(i) \left[\nabla_w J_k(\w_{k,i-1}) \!+\! \v_{k,i}(\w_{k,i-1})\right] \label{eq:A_err}\\
	\widetilde{\w}_{k,i}   &= \sum_{\ell=1}^N a_{\ell k} \tilde{\bpsi}_{\ell,i} \label{eq:C2_err}
\end{align}
Using the mean-value relation \eqref{eq:polyak1}, we can write 
\begin{align}
	\nabla_w J_k(\w_{k,i-1}) = -\H_{k,i}\widetilde{\w}_{k,i-1}
	\label{eq:gradient_expansion}
\end{align}
where 
\begin{equation}
	\H_{k,i} \!\triangleq\! \int_0^1 \nabla^2_w J_k(w^o\!-\!t \widetilde{\w}_{k,i-1})dt
\end{equation}
and where we used the fact that $\nabla_w J_k(w^o) = 0$ since $w^o$ optimizes $J_k(w)$. Substituting \eqref{eq:gradient_expansion} into \eqref{eq:A_err}, we get
\begin{align}
	\tilde{\bpsi}_{k,i} &= \left[I-\mu(i) \H_{k,i-1}\right] \widetilde{\w}_{k,i-1} + \mu(i) \v_{k,i}(\w_{k,i-1})
	\label{eq:A2_err}
\end{align}
We now derive mean-square-error (MSE) recursions by noting that $\|x\|^2 \triangleq x^\T x$ is a convex function of $x$. Therefore, applying Jensen's inequality \cite[p.~77]{cvx_book} to \eqref{eq:C2_err}  we obtain: 
\begin{align}
	\mathbb{E} \{\|\widetilde{\w}_{k,i}\|^2| \boldsymbol{\mathcal{F}}_{i-1} \} &\leq \sum_{\ell=1}^N a_{\ell k} \mathbb{E}\{\|\tilde{\bpsi}_{\ell,i}\|^2| \boldsymbol{\mathcal{F}}_{i-1} \}
	\label{eq:A_var_ineq}
\end{align}
for $k=1,\ldots,N$.
From \eqref{eq:A2_err} and using Assumption \ref{ass:noiseModeling}, we get
\begin{align}
	\mathbb{E} \{\|\tilde{\bpsi}_{k,i}\|^2 &| \boldsymbol{\mathcal{F}}_{i-1} \} = \mathbb{E} \{\|\widetilde{\w}_{k,i-1}\|^2_{\bSigma_{k,i}}| \boldsymbol{\mathcal{F}}_{i-1} \} +\nonumber\\
	& \mu^2(i)\cdot \E\{\left\Vert\v_{k,i}(\w_{k,i-1})\right\Vert^2| \boldsymbol{\mathcal{F}}_{i-1} \}
	\label{eq:A_var_eq} 
\end{align}
where $\bSigma_{k,i} \triangleq \left(I_M - \mu(i) \H_{k,i-1}\right)^2$.
The matrices $\bSigma_{k,i}$ can be seen to be positive semi-definite for large enough $i$ and bounded by 
\begin{align}
	0 \leq \bSigma_{k,i} \leq \gamma^2(i) I_M
	\label{eq:bSigma_bound}
\end{align}
where
\begin{align}
	\gamma(i) \triangleq \max\left\{\left|1-\mu(i) \bar{\lambda}\right|,\left|1-\mu(i) \underline{\lambda}\right|\right\}
	\label{eq:gamma}
\end{align}
Now note that $\gamma^2(i)$ can be upper-bounded by:
\begin{align}
	\gamma^2(i) &= \max\left\{1-2 \mu(i) \bar{\lambda}+\mu^2(i) \bar{\lambda}^2,1-2 \mu(i) \underline{\lambda}+\mu^2(i) \underline{\lambda}^2\right\} \nonumber\\
				   &\leq 1-2 \mu(i) \underline{\lambda} + \mu^2(i) \bar{\lambda}^2
\end{align}
In order to simplify the notation, we introduce the upper-bound
\begin{align}
	\beta(i) &\triangleq 1-2 \mu(i) \underline{\lambda} \left(1 - \frac{(\bar{\lambda}^2+\alpha_2)}{2\underline{\lambda}} \mu(i)\right)
\end{align}
where $\alpha_2$ is defined by \eqref{eq:noise_var_bound1}, namely,
\begin{align}
	\E\{\left\Vert\v_{k,i}(\w_{k,i-1})\right\Vert^2| \boldsymbol{\mathcal{F}}_{i-1} \} \leq \alpha_2 \|\widetilde{\w}_{k,i-1}\|^2 +\sigma_{v2}^2
	\label{eq:noise_var_bound}
\end{align}
Combining \eqref{eq:A_var_eq}, \eqref{eq:bSigma_bound}, and \eqref{eq:noise_var_bound}, we obtain for $k=1,\ldots,N$:
\begin{align}
	\mathbb{E} \{\|\tilde{\bpsi}_{k,i}\|^2| \boldsymbol{\mathcal{F}}_{i-1} \} &\leq \beta(i)\, \|\widetilde{\w}_{k,i-1}\|^2 + \mu^2(i) \sigma_{v2}^2
	\label{eq:C2_var_ineq}
\end{align}
We introduce the MSE vectors:
\begin{align}
	\boldsymbol{\mathcal{W}}_i &\triangleq \big[\|\widetilde{\w}_{1,i}\|^2,\dots,\|\widetilde{\w}_{N,i}\|^2\big]^\T\\
	\boldsymbol{\mathcal{Y}}_i &\triangleq \big[\|\tilde{\bpsi}_{1,i}\|^2,\dots,\|\tilde{\bpsi}_{N,i}\|^2\big]^\T
\end{align}
and rewrite \eqref{eq:A_var_ineq}, and \eqref{eq:C2_var_ineq} as:
\begin{align}
	\E\{\boldsymbol{\mathcal{Y}}_i| \boldsymbol{\mathcal{F}}_{i-1} \} &\preceq \beta(i) \boldsymbol{\mathcal{W}}_{i-1} + \mu^2(i) \sigma_{v2}^2\mathds{1}_N\\
	\E\{\boldsymbol{\mathcal{W}}_i| \boldsymbol{\mathcal{F}}_{i-1} \} &\preceq A^\T \E\{\boldsymbol{\mathcal{Y}}_{i}| \boldsymbol{\mathcal{F}}_{i-1} \}
\end{align}
where $x \preceq y$ indicates that each element of the vector $x$ is less than or equal to the corresponding element of vector $y$. Using the fact that if $x \preceq y$ then $B x \preceq B y$ for any matrix $B$ with non-negative entries, we can combine the above inequality recursions into a single recursion for $\mathcal{W}_i$:
\begin{align}
	\E\{\boldsymbol{\mathcal{W}}_i| \boldsymbol{\mathcal{F}}_{i-1}\} &\preceq \beta(i) A^\T \boldsymbol{\mathcal{W}}_{i-1} + \mu^2(i) \sigma_{v2}^2 \mathds{1}_N
				  \label{eq:bigW_recur_1}
\end{align}
Now, we multiply both sides by $p^\T$, where $p$ is the Perron eigenvector of $A$. This yields the recursion:
\begin{align}
	\E\{p^\T\boldsymbol{\mathcal{W}}_i| \boldsymbol{\mathcal{F}}_{i-1}\} &\leq \beta(i) p^\T \boldsymbol{\mathcal{W}}_{i-1} + \mu^2(i) \sigma_{v2}^2
	\label{eq:scalar_recursion}
\end{align}

\subsection{MSE Convergence}
For the first part of Theorem \ref{thm:non_asym_convergence_main} (asymptotic MSE convergence), we evaluate  the expectation of both sides of the inequality in \eqref{eq:scalar_recursion} to get 
\begin{align}
	\E\{p^\T\boldsymbol{\mathcal{W}}_i\} &\leq  \beta(i) \E\{p^\T  \boldsymbol{\mathcal{W}}_{i-1}\} \!+\! \mu^2(i) \sigma_{v2}^2
	\label{eq:scalar_recursion_unconditional}
\end{align}
Now since $\mu(i) \rightarrow 0$, we conclude that for large enough $i > i_o$, the sequence
$\mu^2(i)$ will assume smaller values than $\mu(i)$. Therefore, a large enough time
index, $i_o>0$ exists such that the following condition is satisfied:
\begin{align}
	0 < (1-\beta(i)) = (2\mu(i) \underline{\lambda} + (\bar{\lambda}^2 + \alpha_2) \mu^2(i)) < 1 \label{eq:condition_polyak_lemma}
\end{align}
Furthermore, noting that $\sum_{i=1}^\infty (1\!-\!\beta(i)) = \infty$,
$\lim_{i\rightarrow\infty} \mu^2(i)\sigma_{v2}^2/(1-\beta(i)) = 0$, we then invoke Lemma \ref{lem:Polyak} (Appendix \ref{app:lemmas}) to arrive at the desired result.

\subsection{Almost-Sure Convergence}
We see that \eqref{eq:scalar_recursion} fits the form of Lemma \ref{lem:Gladyshev} (Appendix \ref{app:lemmas}), where \eqref{eq:condition_polyak_lemma} will hold for large enough $i$, $\sum_{i=1}^\infty (1\!-\!\beta(i)) = \infty$, $\lim_{i\rightarrow\infty} \mu^2(i)\sigma_{v2}^2/(1-\beta(i)) = 0$, and $\sum_{i=1}^\infty \mu^2(i)\sigma_{v2}^2 < \infty$, and we conclude that $p^\T \boldsymbol{\mathcal{W}}_i \rightarrow 0$ almost surely so $\boldsymbol{\mathcal{W}}_i \rightarrow 0$ almost surely as well since all the entries of $p$ are strictly positive. This implies that $\w_{k,i} \rightarrow w^o$ almost surely for all $k=1,\ldots,N$.

\subsection{MSE Convergence Rate}
\label{app:MSE_convergence_rate}
For the convergence rate, we take the expectation of 
\begin{align}
	\E\{p^\T\boldsymbol{\mathcal{W}}_i| \boldsymbol{\mathcal{F}}_{i-1}\} &\leq \beta(i) p^\T \boldsymbol{\mathcal{W}}_{i-1} + \mu^2(i) \sigma_{v2}^2
	\label{eq:scalar_recursion_beta}
\end{align}
to get
\begin{align}
\E\{p^\T\boldsymbol{\mathcal{W}}_i\} &\leq   \left(1-\frac{a}{i}+\frac{b}{i^2}\right) \E\{p^\T  \boldsymbol{\mathcal{W}}_{i-1}\} + \frac{c}{i^2}
\end{align}
where $a \triangleq 2\mu\underline{\lambda}$, $b \triangleq \mu^2(\bar{\lambda}+\alpha_2)$, and $c \triangleq \sigma_{v2}^2 \mu^2$. Using Lemma \ref{lem:convergence_deterministic_sequence} (Appendix \ref{app:lemmas}), we have that when $a = 2\underline{\lambda} \mu > 1$,
\begin{align}
	\limsup_{i\rightarrow \infty} \frac{\E\{p^\T\boldsymbol{\mathcal{W}}_i\}}{i^{-1}} \leq \frac{c}{a-1}, \label{eq:rate_mse}
\end{align}
which in turn implies that each $\E \|\w_{k,i}\|^2$ diminishes at the rate $i^{-1}$, which is our desired result.

\subsection{Fourth-Order-Moment Convergence Rate}
Observe that by Jensen's inequality, we have that
\begin{align}
	\E \|\widetilde{\w}_{k,i}\|^4 \leq \sum_{\ell = 1}^N a_{\ell k} \E \|\tilde{\bpsi}_{\ell,i}\|^4
\end{align}
and
\begin{align}
	\|\tilde{\bpsi}_{k,i}\|^4 = \|(I_M \!-\! \mu(i) \H_{k,i-1}) \widetilde{\w}_{k,i-1} \!+\! \mu(i) \v_{k,i}(\w_{k,i-1})\|^4
\end{align}
By utilizing Lemma 5 from \cite[p.~32]{Xiaochuan_PartI}, we obtain:
\begin{align}
	\E\{\|&\tilde{\bpsi}_{k,i}\|^4 | \boldsymbol{\mathcal{F}}_{i-1}\} \leq \theta_1(i) \|\widetilde{\w}_{k,i-1}\|^4 + \nonumber\\
	&\quad\ 3 \mu^4(i) \E \{\|\v_{k,i}(\w_{k,i-1})\|^4| \boldsymbol{\mathcal{F}}_{i-1}\} + \nonumber\\
	&\quad\ 8 \theta_2(i) \|\widetilde{\w}_{k,i-1}\|^2 \cdot \E \{\|\v_{k,i}(\w_{k,i-1})\|^2 | \boldsymbol{\mathcal{F}}_{i-1}\} \label{eq:noise_fourth_xiaochuan}
\end{align}
where
\begin{align}
	\theta_1(i) &\triangleq 1 - 4\mu(i)\underline{\lambda} + 2 \mu^2(i) (2\underline{\lambda}^2 + \bar{\lambda}^2) + \mu^4(i) \bar{\lambda}^4\\
	\theta_2(i) &\triangleq \beta(i) \cdot \mu^2(i)
\end{align}
Now, using \eqref{eq:noise_fourth_bound}, we have that
\begin{align}
	&\E \{\|\v_{k,i}(\w_{k,i-1})\|^4| \boldsymbol{\mathcal{F}}_{i-1}\} \leq \alpha_4 \|\widetilde{\w}_{k,i-1}\|^4 + \sigma_{v4}^4 \label{eq:noise_fourth_app}
\end{align}
and using \eqref{eq:noise_var_bound1} and \eqref{eq:noise_fourth_app} in \eqref{eq:noise_fourth_xiaochuan}, we obtain
\begin{align}
	\E\{\|\tilde{\bpsi}_{k,i}\|^4 | \boldsymbol{\mathcal{F}}_{i-1}\} &\leq \theta_1(i) \|\widetilde{\w}_{k,i-1}\|^4 + \nonumber\\
	&\quad\ 3 \mu(i)^4 (\alpha_4 \|\widetilde{\w}_{k,i-1}\|^4 + \sigma_{v4}^4) +\nonumber\\
	&\quad\ 8 \theta_2(i) \|\widetilde{\w}_{k,i-1}\|^2 \cdot (\alpha_2 \|\widetilde{\w}_{k,i-1}\|^2 \!+\! \sigma_{v2}^2)
\end{align}
or, equivalently,
\begin{align}
	\E\{\|\tilde{\bpsi}_{k,i}\|^4 | \boldsymbol{\mathcal{F}}_{i-1}\} &\leq \left(1-\frac{a}{i} + O(i^{-2})\right) \|\widetilde{\w}_{k,i-1}\|^4 +  \nonumber\\
	&\quad\ O(i^{-2}) \|\widetilde{\w}_{k,i-1}\|^2 \!+\! O(i^{-4})
\end{align}
where $a \triangleq 4\underline{\lambda}\mu$. Taking the unconditional expectation of $\E\{\|\tilde{\bpsi}_{k,i}\|^4 | \boldsymbol{\mathcal{F}}_{i-1}\}$, we obtain,
\begin{align}
	\E \|\tilde{\bpsi}_{k,i}\|^4 &\leq \left(1-\frac{a}{i} + O(i^{-2})\right) \E \|\widetilde{\w}_{k,i-1}\|^4 + \nonumber\\
	&\quad\ O(i^{-2}) \E\|\widetilde{\w}_{k,i-1}\|^2 \!+\! O(i^{-4})
\end{align}
Now, observing that $\E\|\widetilde{\w}_{k,i-1}\|^2 \sim O(i^{-1})$ according to \eqref{eq:rate_mse}, we have that
\begin{align}
	\E \|\tilde{\bpsi}_{k,i}\|^4 &\leq \left(1\!-\!\frac{a}{i} \!+\! O(i^{-2})\right) \E \|\widetilde{\w}_{k,i-1}\|^4 + O(i^{-3})	
\end{align}
We can now form the network variables
\begin{align}
	\mathcal{W}^4_i &\triangleq \big[\E\|\widetilde{\w}_{1,i}\|^4,\dots,\E\| \widetilde{\w}_{N,i}\|^4\big]^\T \label{eq:W_4}
\end{align}
so that
\begin{align}
	\mathcal{W}^4_i \preceq \left(1-\frac{a}{i} + O(i^{-2})\right) A^\T \mathcal{W}^4_{i-1} + O(i^{-3})
\end{align}
and taking the maximum-norm, we obtain:
\begin{align}
	\|\mathcal{W}^4_i\|_{\infty} \leq \left(1-\frac{a}{i} + O(i^{-2})\right) \| \mathcal{W}^4_{i-1} \|_{\infty} + \frac{e}{i^3}
\end{align}
where $e$ is a constant independent of $i$. Using Lemma \ref{lem:convergence_deterministic_sequence} (Appendix \ref{app:lemmas}), we have that when $a = 4\underline{\lambda}\mu > 2$:
\begin{align}
	\limsup_{i\rightarrow \infty} \frac{\|\mathcal{W}^4_i\|_{\infty}}{i^{-2}} \leq \frac{e}{a-2}, \label{eq:rate_fourth_order}
\end{align}
and therefore $\E\|\widetilde{\w}_{k,i}\|^4 = O(i^{-2})$ when $2\underline{\lambda}\mu > 1$.

\section{Proof of Theorem \ref{thm:asymptotic_term}}
\label{sec:Proof_Asymptotic}

First, we establish an equality regarding the relationship between the excess-risk and weighted means-square-error quantities:

\begin{lemma}[\textbf{Time-evolution of excess-risk}]
	\label{lem:ER}
	Let Assumptions \ref{ass:HessianAssumption}--\ref{ass:noiseModeling} hold. Then, the excess-risk at agent $k$ is given by
	\begin{align}
		\textrm{ER}_k(i) \!=\! \textrm{AT}(i) \!+\! \textrm{TT}(i) \!+\! \textrm{HO}_{1,k}(i) \!+\! \textrm{HO}_{2}(i) \!+\! \textrm{HO}_{3}(i) \label{eq:unrolled_recursion}
	\end{align}
	where
	\begin{alignat}{2}
\textrm{AT}(i) &\triangleq \sum_{j=1}^{i-1} \!\mu^2(j) \Tr\!\left(\G_i \Omega_{j,i}\right),&&\!\!\!\!\!\!\!\!\!\!\!\!\!\!\!\!\!\!\!\!\!\!\!\!\!\!\!\!\!\!\!\!\!\!\!\!\!\!\!\![\textrm{Asymptotic Term}] \label{eq:def_AT}\\
\textrm{TT}(i) &\triangleq \E \|\widetilde{\sw}_{0}\|^2_{\Omega_{0,i}},&&\!\!\!\!\!\!\!\!\!\!\!\!\!\!\!\!\!\!\!\!\!\!\!\!\!\!\!\!\!\!\!\!\!\!\!\!\!\!\!\![\textrm{Transient Term}] \label{eq:def_TT}\\
\textrm{HO}_{1,k}(i) &\triangleq \E \widetilde{\w}_{k,i-1}^\T \widetilde{\boldsymbol{S}}_{k,i}\widetilde{\w}_{k,i-1},\,&&\!\!\!\!\!\!\!\!\!\!\!\!\!\!\!\!\!\!\!\!\!\!\!\!\!\!\!\!\!\!\!\!\!\!\!\!\!\!\!\![\textrm{High-order\ Term}] \label{eq:def_HO1}\\
\textrm{HO}_{2}(i) &\triangleq \sum_{j=1}^{i-1} \Tr\!\left(\mathcal{R}_{d,j} \Omega_{j,i}\right),&&\!\!\!\!\!\!\!\!\!\!\!\!\!\!\!\!\!\!\!\!\!\!\!\!\!\!\!\!\!\!\!\!\!\!\!\!\!\!\!\![\textrm{High-order Term}]  \label{eq:def_HO2}\\
\textrm{HO}_{3}(i) &\triangleq 2 \!\sum_{j=1}^{i-1} \Tr\left(\Omega_{j,i} \E\{\B_{j} \widetilde{\sw}_{j-1} \d_{j}^\T\}\right),&&\!\!\!\!\!\!\!\!\!\!\!\!\!\!\!\!\!\![\textrm{High-order Term}]  \label{eq:def_HO3}\\
\Omega_{j,i} &\triangleq \left(\prod_{t=j+1}^{i-1} \B_t^\T\right) \!\!\Sigma \!\left(\prod_{t=j+1}^{i-1} \B_t^\T\right)^{\!\!\!\!\!\T} &&  \label{eq:def_Gamma}\\
\Sigma &\triangleq E_{kk} \otimes \frac{1}{2} H && \label{eq:sigma}\\	
	\widetilde{\bm{S}}_{k,i} &\triangleq {\bm{S}}_{k,i} - \frac{1}{2} H &&
	\label{eq:S_tilde}\\
	\mathcal{A} &\triangleq A \otimes I_M &&\\
	\G_i &\triangleq \mathcal{A}^\T \E\{\mathcal{R}_{g,i}(\sw_{i-1})\} \mathcal{A} &&\label{eq:G_i}\\
	\D &\triangleq I_N \otimes H &&\label{eq:deterministic_F}\\
	\mathcal{B}_i &\triangleq \mathcal{A}^\T (I\!-\!\mu(i) \D) &&\label{eq:B_i}\\
	\boldsymbol{\D}_{i-1} \!&\triangleq\! \underset{1\leq k\leq N}{\mathrm{diag}} \left\{\int_0^1 \nabla^2_w J_k(w^o-t \widetilde{\w}_{k,i-1}) dt\right\} &&\label{eq:bold_script_H}\\
	\d_i \!&\triangleq\! \mu(i) \mathcal{A}^\T (\D - \boldsymbol{\D}_{i-1}) \widetilde{\sw}_{i-1} &&\\
	\mathcal{R}_{d,i} &\triangleq \E \{\d_i\d_i^\T\}&&
\end{alignat}
Moreover, the products that appear in expression \eqref{eq:def_Gamma} for $\Omega_{j,i}$ are performed in the following order:
\begin{align}
	\Omega_{j,i} = \B_{j+1}^\T\cdots\B_{i-1}^\T\Sigma\B_{i-1}\cdots\B_{j+1}
\end{align}
\end{lemma}
\begin{proof}
	First, observe from \eqref{eq:def_ER_k} that:
\begin{align}
	\textrm{ER}_k(i) &= \E \|\widetilde{\sw}_{i-1}\|^2_{E_{kk}\otimes \bm{S}_{k,i}} \nonumber\\
	&= \E \widetilde{\w}_{k,i-1}^\T \bm{S}_{k,i} \widetilde{\w}_{k,i-1} \nonumber\\
	&= \E\widetilde{\w}_{k,i-1}^\T\!\left(\!\frac{1}{2}H \!+\! \widetilde{\boldsymbol{S}}_{k,i}\!\right) \!\!\widetilde{\w}_{k,i-1} \nonumber\\
	&= \E \|\widetilde{\sw}_{i-1}\|^2_{\Sigma} \!+\! \mathrm{HO}_{1,k}(i) \label{eq:w_tilde_HO1}
\end{align}
Now to study $\E \|\widetilde{\sw}_{i-1}\|^2_{\Sigma}$, we first  verify from \eqref{eq:A}--\eqref{eq:C2} that the error quantity $\widetilde{\sw}_i$ evolves according to the following dynamics:
\begin{align}
	\widetilde{\sw}_{i-1} &= \mathcal{B}_{i-1}\widetilde{\sw}_{i-2} \!+\! \mu(i-1) \mathcal{A}^\T \g_{i-1}(\w_{i-2}) \!+\! \d_{i-1} \label{eq:error_recursion1}
\end{align}
Equating the energies of both sides of \eqref{eq:error_recursion1} and taking expectations, we arrive at the following recursion, which relates weighted variances of two successive network error vectors, $\widetilde{\sw}_{i-1}$ and $\widetilde{\sw}_{i-2}$: 
\begin{align}
	\E \|\widetilde{\sw}_{i-1}\|_\Sigma^2 &= \E \left\{\|\widetilde{\sw}_{i-2}\|_{{\B}_{i-1}^\T \Sigma {\B}_{i-1}}^2\right\} + \nonumber\\
	&\quad\  \mu^2(i-1) \Tr(\G_{i-1} \Sigma) + \E \|\d_{i-1}\|^2_\Sigma + \nonumber\\
	&\quad\ 2 \E\{\d_{i-1}^\T \Sigma {\B}_{i-1} \widetilde{\sw}_{i-2}\}
	\label{eq:approx_MSE_recursion}
\end{align}
where 
\begin{align}
	\G_i \triangleq \mathcal{A}^\T \E\{\mathcal{R}_{g,i}(\sw_{i-1})\} \mathcal{A}
\end{align}
If we now iterate the recursion \eqref{eq:approx_MSE_recursion}, we obtain that 
\begin{align}
	\E \|\widetilde{\sw}_{i-1}\!\|_\Sigma^2 &= \textrm{AT}(i) + \textrm{TT}(i) + \textrm{HO}_{2}(i) + \textrm{HO}_{3}(i) \label{eq:WMSE}
\end{align}
where $\textrm{AT}(i)$, $\textrm{TT}(i)$, $\textrm{HO}_{2}(i)$, and $\textrm{HO}_{3}(i)$ were defined in \eqref{eq:def_AT}, \eqref{eq:def_TT}, \eqref{eq:def_HO2}, and \eqref{eq:def_HO3}, respectively. Then, combining \eqref{eq:w_tilde_HO1} and \eqref{eq:WMSE}, we obtain the following representation for  the excess-risk:
\begin{equation}
	\textrm{ER}_k(i) = \textrm{AT}(i) + \textrm{TT}(i) + \textrm{HO}_{1,k}(i) + \textrm{HO}_{2}(i) + \textrm{HO}_{3}(i)  
\end{equation}
 where $\mathrm{HO}_{1,k}(i)$ was defined in \eqref{eq:def_HO1}.
\end{proof}
The first term on the right-hand side of \eqref{eq:unrolled_recursion} models the asymptotic behavior of the network since, as we will see further ahead, it will decay at the slowest rate in comparison to all other terms in \eqref{eq:unrolled_recursion}. In comparison, the second term on the right-hand side of \eqref{eq:unrolled_recursion} models the transient behavior of the diffusion network since it is governed by the initial error in estimating $w^o$ at the different nodes. The remaining terms will decay faster than the first two terms, and thus will be considered ``high-order'' terms. It is necessary to study the behavior of all terms in \eqref{eq:unrolled_recursion} to understand the dynamics of the network. 

First, we will establish the convergence rate of the so-called ``asymptotic-term'' $\textrm{AT}(i)$:
\begin{theorem}[\textbf{Convergence rate of asymptotic term $\AT(i)$}]
	\label{thm:asymptotic_term_sub}
	Let Assumptions \ref{ass:HessianAssumption}-\ref{ass:noiseModeling} hold. It then holds asymptotically that
	\begin{align}
\AT(i) \sim \frac{\mu^2}{2} p^\T \left(\sum_{m=1}^M \lambda_m \alpha_m(i)  R_{v,m}\right) p
\label{eq:asymptotic_theorem_result_sub}
\end{align}
where the notation $f(i) \!\sim\! g(i)$ implies that $\displaystyle{\lim_{i\rightarrow\infty}} f(i)/g(i) \!=\! 1$ and $\alpha_m(i)$ is defined as
\begin{align}
	\alpha_m(i) \!\!\triangleq\!\! \begin{cases}
																										\frac{1}{2\lambda_m \mu - 1}\cdot \frac{1}{i} + \Theta\left(\frac{1}{i^{2\lambda_m\mu}}\right), & 2\lambda_{m}\mu \neq 1\\
																										\log(i)/i, &2\lambda_{m}\mu = 1
																										\end{cases} \label{eq:alpha_m_AT}
\end{align}
Moreover, $\lambda_m$ is the $m$-th eigenvalue of $H$ and the $N\times N$ matrix $R_{v,m}$ is defined as:
\begin{align}
	R_{v,m} \triangleq \mathrm{diag}\{(\Phi^\T R_{v,1}^o \Phi)_{mm}, \ldots, (\Phi^\T R_{v,N}^o \Phi)_{mm}\}
\end{align}
where the notation $(X)_{mm}$ denotes the $m$-th diagonal element of the matrix $X$. Specifically, when $2\lambda_{\min}\mu > 1$, we have that:
\begin{align}
	\AT(i) \sim \frac{\mu}{2 i} \sum_{m=1}^M \frac{\lambda_m \mu}{2\lambda_m \mu - 1}  \cdot p^\T R_{v,m} p
\end{align}
\end{theorem}
\begin{proof}
	Observe that the covariance matrix $\E \{\mathcal{R}_{g,i}(\sw_{i-1})\}$ converges to a constant matrix when $2\underline{\lambda}\mu > 1$:
	\begin{equation}
		\E \{\!\mathcal{R}_{g,i}(\sw_{i-1})\!\} \!=\! \E\{\mathcal{R}_{g,i}(\mathds{1}_N\!\otimes\! w^o)\} + O\!\left(\!\frac{1}{i^{\min(1,\gamma/2)}}\!\right)
	\end{equation}
	where $0 < \gamma \leq 4$ is from \eqref{eq:noise_covar_lipschitz_local}. To see this, consider the quantity:
	\begin{align}
	\|\E\mathcal{R}_{g,i}(\sw_{i-1}) &-\E\{\mathcal{R}_{g,i}(\mathds{1}_N\otimes w^o)\}\| \nonumber\\
	&\stackrel{(a)}{\leq} \E \|\mathcal{R}_{g,i}(\sw_{i-1}) \!-\! \mathcal{R}_{g,i}(\mathds{1}_N\!\otimes\! w^o)\| \nonumber\\
														  &\stackrel{(b)}{\leq} \kappa_1 \cdot \E \|\widetilde{\sw}_{i-1}\|^2 + \kappa_2 \cdot \E \|\widetilde{\sw}_{i-1}\|^\gamma\nonumber\\
														  &\stackrel{(c)}{\leq} \kappa_1 \cdot \E \|\widetilde{\sw}_{i-1}\|^2 + \kappa_2 \cdot (\E \|\widetilde{\sw}_{i-1}\|^2)^\frac{\gamma}{2} \nonumber\\
														  &\stackrel{(d)}{\sim} O(1/i) + O(1/i^{\gamma/2})
	\end{align}
	where steps $(a)$ and $(c)$ are due to Jensen's inequality, step $(b)$ is due to \eqref{eq:noise_covar_lipschitz}, and step $(d)$ is due to \eqref{eq:MSD_ind_quadratic}--\eqref{eq:fourth_ind} when $2\underline{\lambda} \mu > 1$.	We conclude, therefore, that:
\begin{align}
	\lim_{i\rightarrow\infty} \E \{\mathcal{R}_{g,i}(\sw_{i-1})\} = \mathcal{R}_{g}^o
\end{align} 
where $\mathcal{R}_g^o$ is defined in \eqref{eq:noise_covariance}. Combining with \eqref{eq:G_i}, we obtain
\begin{align}
	\lim_{i\rightarrow\infty} \G_i = \G \triangleq \mathcal{A}^\T \mathcal{R}_g^o \mathcal{A}
\end{align}
where, as pointed out in \eqref{eq:R_v}, $\mathcal{R}_g^o$ is block-diagonal. Since we are interested in the asymptotic performance of the term $\AT(i)$, we may replace $\G_i$ with $\G$ in the following analysis. Now, with $\Sigma = \frac{1}{2} E_{kk} \!\otimes\! H$, we rewrite $\AT(i)$ as 
\begin{align}
&\AT(i) = \sum_{j=1}^{i-1} \mu^2(j) \Tr(\mathcal{R}_g^o \Omega_{j,i})
\label{eq:asym_Tr_00}
\end{align}
We observe that \eqref{eq:asym_Tr_00} is in the exact form required by Lemma \ref{lem:series_of_traces} (Appendix \ref{app:lemmas}), where we make the identifications:
\begin{align}
\mathcal{L} \leftarrow \mathcal{R}_g^o,\quad\quad b \leftarrow 0,\quad\quad q \leftarrow 1
\end{align}
where $\mathcal{R}_g^o$ is a block-diagonal matrix with block-matrices $\{R_{1},\ldots,R_{N}\}$. We thus obtain \eqref{eq:asymptotic_theorem_result_sub}.
\end{proof}

Next, we consider the remaining terms of \eqref{eq:unrolled_recursion}, beginning with $\textrm{TT}(i)$:
\begin{theorem}[\textbf{Convergence rate of transient term }$\textrm{TT}(i)$]
\label{thm:transient_term}
	Let Assumptions \ref{ass:HessianAssumption}-\ref{ass:noiseModeling} hold. The transient term in \eqref{eq:unrolled_recursion} decays asymptotically according to
	\begin{align} 
	\textrm{TT}(i) = \Theta\left(1/i^{2\lambda_{\min}\mu}\right),\quad\quad \textrm{as\ } i\rightarrow\infty
\end{align}
\end{theorem}
\begin{proof}
See Appendix \ref{sec:Proof_Transient}.
\end{proof}
Actually, in App. \ref{sec:Proof_Transient}, we derive upper and lower bounds for the transient term  (see \eqref{eq:transient_lower_bound} and \eqref{eq:transient_upper_bound}). Examining these bounds, we notice that the transient term will grow initially before it starts to decay. The asymptotic rate of decay of the transient error is on the order of $1/i^{2\lambda_{\min} \mu}$, for any value of $2\lambda_{\min}\mu$. 

We now establish the convergence rate of the remaining high-order terms: $\textrm{HO}_{1,k}(i)$, $\textrm{HO}_2(i)$, and $\textrm{HO}_3(i)$ in the case where $2\lambda_{\min}\mu > 1$:
\begin{theorem}[\textbf{Convergence rate of $\textrm{HO}_{1,k}(i)$}]
\label{thm:HO1}
Let Assumptions \ref{ass:HessianAssumption}--\ref{ass:noiseModeling} hold. It then holds asymptotically that
\begin{align}
\mathrm{HO}_{1,k}(i) = O\left(1/i^{3/2}\right)
\end{align}
when $2\underline{\lambda}\mu > 1$.
\end{theorem}
\begin{proof}
We have from \eqref{eq:S_tilde} that 
\begin{align}
	\|\tilde{\boldsymbol{S}}_{k,i}\| &\leq \int_0^1 \!\!t\!\! \int_0^1\!\! \|\nabla_w^2 J(w^o - s \ t \ \widetilde{\w}_{k,i-1}) - H\|ds dt \nonumber\\
	&\stackrel{(a)}{\leq} \tau_k \|\widetilde{\w}_{k,i-1}\| \int_0^1 t^2 \int_0^1 s\  ds\ dt \nonumber\\
	&= \frac{\tau_k}{6} \|\widetilde{\w}_{k,i-1}\|
\end{align}
where step $(a)$ is due to the global Lipschitz condition \eqref{eq:global_Lipschitz_Hessian}. Therefore,
\begin{align}
	\mathrm{HO}_1(i) &\leq \frac{\tau_k}{6}\E \|\widetilde{\w}_{k,i-1}\|^3 \nonumber\\
	&\stackrel{(b)}{\leq} \frac{\tau_k}{6} \left(\E \|\widetilde{\w}_{k,i-1}\|^4\right)^{3/4} \nonumber\\
	&\stackrel{(c)}{\sim} O(i^{-3/2}),\quad\quad\quad [2\underline{\lambda}\mu > 1]
\end{align}
where step $(b)$ is due to Jensen's inequality, which states that $\E f(\x) \leq f(\E \x)$ for a \emph{concave} function $f(x)$:
\begin{align}
	\E \|\widetilde{\w}_{k,i-1}\|^3 = \E (\|\widetilde{\w}_{k,i-1}\|^4)^{3/4} \leq (\E \|\widetilde{\w}_{k,i-1}\|^4)^{3/4}
\end{align}
and step $(c)$ is due to \eqref{eq:fourth_ind} when $2 \underline{\lambda} \mu > 1$. 
\end{proof}
Observe that Theorem \ref{thm:HO1} establishes that $\textrm{HO}_1(i) = o(1/i)$. This is an important result since the asymptotic term converges at the rate $1/i$ when $2\lambda_{\min}\mu > 1$.
Next, we establish the following lemma for the convergence rate of the second high-order term: $\textrm{HO}_2(i)$:
\begin{theorem}[\textbf{Convergence rate of $\textrm{HO}_{2}(i)$}]
	\label{thm:HO2}
	Let Assumptions \ref{ass:HessianAssumption}-\ref{ass:noiseModeling} hold. It then holds asymptotically that
	\begin{align}
		\textrm{HO}_{2}(i) \sim \begin{cases}
					\Theta(1/i^3) + \Theta(1/i^{2\lambda_{\min} \mu}),& 1 < 2\lambda_{\min}\mu \neq 3\\
					\Theta\left(\frac{\log(i)}{i^3}\right), & 2\lambda_{\min}\mu = 3
   		 \end{cases}
	\end{align}
	when $2\lambda_{\min}\mu > 1$.
\end{theorem}
\begin{proof}
	See Appendix \ref{sec:Proof_HO2}.
\end{proof}
Thus, we have that Theorem \ref{thm:HO2} demonstrates that the second high-order term $\textrm{HO}_2(i) = o(1/i)$, which is again faster than the asymptotic term $\textrm{AT}(i)$. Finally, we examine the final high-order term, $\textrm{HO}_3(i)$:
\begin{theorem}[\textbf{Convergence rate of $\textrm{HO}_{3}(i)$}]
	\label{thm:HO3}
	Let Assumptions \ref{ass:HessianAssumption}-\ref{ass:noiseModeling} hold. It holds asymptotically that
	\begin{equation}
		\textrm{HO}_{3}(i) \sim \begin{cases}
					\Theta(1/i^{3/2}) + \Theta(1/i^{2\lambda_{\min} \mu}),& 1 < 2\lambda_{\min}\mu \neq \frac{3}{2}\\
					\Theta\left(\frac{\log(i)}{i^{3/2}}\right), & 2\lambda_{\min}\mu = \frac{3}{2}
   		 \end{cases}
	\end{equation}
	when $2\lambda_{\min}\mu > 1$.
\end{theorem}
\begin{proof}
	See Appendix \ref{sec:Proof_HO3}.
\end{proof}
We observe from Theorems \ref{thm:transient_term}--\ref{thm:HO3} that $TT(i)$, $\mathrm{HO}_{1,k}(i)$, $\mathrm{HO}_{2}(i)$, and $\mathrm{HO}_{3}(i)$ are higher-order terms in comparison to $\textrm{AT}(i)$ when $2\lambda_{\min}\mu > 1$ since they decay at a rate that is faster than that of $\AT(i)$, $\Theta(1/i)$. Notice that while \eqref{eq:MSD_ind_quadratic}--\eqref{eq:fourth_ind} required $2\underline{\lambda}\mu > 1$, Theorem \ref{thm:asymptotic_term_sub}, which establishes the same asymptotic rate, requires $2\lambda_{\min}\mu > 1$. However, due to \eqref{eq:lambda_min>=bar_lambda}, we know that:
\begin{align}
	2\underline{\lambda} \mu > 1 \Longrightarrow 2 \lambda_{\min} \mu > 1
\end{align}
and thus as long as the condition of \eqref{eq:MSD_ind_quadratic}--\eqref{eq:fourth_ind} is satisfied, the condition on $2\lambda_{\min} \mu > 1$ will automatically be satisfied. We conclude from \eqref{eq:unrolled_recursion} and Theorems \ref{thm:asymptotic_term_sub}--\ref{thm:HO3} that:
\begin{align}
	\textrm{ER}_k(i) \sim \frac{\mu}{2 i} \sum_{m=1}^M \frac{\lambda_m \mu}{2\lambda_m \mu - 1}  \cdot p^\T R_{v,m} p
\end{align}
when $2\lambda_{\min}\mu > 1$, which is our desired result. It is important to observe that it is possible for all terms $\textrm{TT}(i)$, $\textrm{HO}_{1,k}(i)$, $\textrm{HO}_2(i)$, and $\textrm{HO}_3(i)$ to depend on the network topology, just as the asymptotic term $\textrm{AT}(i)$ depends on it through the Perron vector $p$. However, since all of the former terms decay \emph{faster} than $\textrm{AT}(i)$, they do not contribute to the asymptotic excess-risk curve and thus their dependency on the network topology was not examined.

\section{Proof of Theorem \ref{thm:transient_term}}
\label{sec:Proof_Transient}
Introduce the Jordan canonical form of matrix $A$:
	\begin{align}
		A = T D T^{-1} \label{eq:A_jordan2}
	\end{align}
	The relation between the quantities $\{T,D\}$ and $\{Y,D_{N-1},R\}$ in \eqref{eq:A_factorization_split} is as follows:
\begin{align}
T = \left[\!\!\begin{array}{cc}
p & Y
\end{array}\!\!\right], \  D = \left[\!\!\begin{array}{cc}
1\\
 & D_{N-1}
\end{array}\!\!\right], \  T^{-1} &= \left[\!\!\begin{array}{c}
\mathds{1}_N^\T \\
R^\T
\end{array}\!\!\right]
\label{eq:decomposition_D}
\end{align}
Substituting  \eqref{eq:sigma}, \eqref{eq:B_i}, and \eqref{eq:A_jordan2} into \eqref{eq:def_TT}, we observe that the weighting matrix $\Omega_{0,i}$  can be written as:
\begin{align}
	\Omega_{0,i} &= \B_1^\T\cdots\B_{i-1}^\T\Sigma\B_{i-1}\cdots\B_1 \nonumber\\
	&= \frac{1}{2} T D^{i-1} T^{-1}E_{kk}T^{-\T} {D^\T}^{i-1} T^\T \otimes \Phi K_{0,i} \Phi^\T \label{eq:prod_F}
\end{align}
where $K_{j,i}$ is the following diagonal matrix:
\begin{equation}
	K_{j,i} \triangleq \left(\!\prod_{t=j+1}^{i-1}\!(I_M \!-\! \mu(t) \Lambda)\!\right) \!\Lambda\! \left(\!\prod_{t=j+1}^{i-1}\!(I_M \!-\! \mu(t) \Lambda)\!\right) \label{eq:K_i}
\end{equation}	
Now, using \eqref{eq:decomposition_D}, we conclude that
\begin{align}
	(T D T^{-1})^{i-1}  &= T D^{i-1} T^{-1} \nonumber\\
	&= \left[\begin{array}{cc}
						p & Y
						\end{array} \right] \left[\begin{array}{cc}
						1 &  \\ 
						 & D_{N-1}^{i-1}
						\end{array} \right] \left[\begin{array}{c}
						\mathds{1}_N^\T \\ 
						R^\T
						\end{array} \right] \nonumber\\
						&= p \mathds{1}^\T + Y D_{N-1}^{i-1} R^\T \label{eq:TDT^{i-1}}
\end{align}
so that
\begin{align}
	T &D^{i-1} T^{-1}E_{kk}T^{-\T} {D^\T}^{i-1} T^\T	\nonumber\\
	&\stackrel{(a)}{=} (p \mathds{1}^\T + Y D_{N-1}^{i-1} R^\T) E_{kk} (\mathds{1} p^\T + R {D_{N-1}^{\T {i-1}}} Y^\T) \nonumber\\
	&= p \mathds{1}^\T E_{kk}\mathds{1} p^\T + Y D_{N-1}^{i-1} R^\T E_{kk} \mathds{1} p^\T + \nonumber\\
	&\quad\ p \mathds{1}^\T E_{kk} R {D_{N-1}^{\T {i-1}}} Y^\T \!\!+\!\! Y D_{N-1}^{i-1} R^\T E_{kk} R {D_{N-1}^{\T {i-1}}} Y^\T \nonumber\\
	&\stackrel{(b)}{=} p p^\T + Y D_{N-1}^{i-1} R^\T E_{kk} \mathds{1} p^\T + p \mathds{1}^\T E_{kk} R {D_{N-1}^{\T {i-1}}} Y^\T +\nonumber\\
	&\quad\  Y D_{N-1}^{i-1} R^\T E_{kk} R {D_{N-1}^{\T {i-1}}} Y^\T  \label{eq:Transient_jordan_form}
\end{align}
where step $(a)$ is due to \eqref{eq:TDT^{i-1}} and we used the fact that $\mathds{1}_N^\T E_{kk} \mathds{1}_N = 1$ since $E_{kk}$ contains a single unit element at the $(k,k)$ entry in step $(b)$. The first term of \eqref{eq:Transient_jordan_form} is a constant and does not vary with $i$. On the other hand, all other terms vary with $i$. Using Lemma \ref{lem:matrix_power} (Appendix \ref{app:lemmas}), we can now see that all the time-varying terms in \eqref{eq:Transient_jordan_form} decay to zero at least at an exponential rate:
\begin{align}
	\|Y D_{N-1}^{i-1} R^\T\! E_{kk} \mathds{1} p^\T\!\| &\!\leq\! \|Y D_{N-1}^{i-1} R^\T \| \!\cdot\! \|\! E_{kk} \mathds{1}p^\T\!\|\\
	\|p \mathds{1}^\T E_{kk} R {D^\T}_{N-1}^{i-1} Y^\T\| &\!=\! \|\!Y D_{N-1}^{i-1} R^\T E_{kk} \mathds{1} p^\T\!\| \\
	\|Y \!D_{N-1}^{i-1} R^\T \!E_{kk} R {D^\T}_{N-1}^{i-1} Y^\T\!\| &\!\leq\! \|E_{kk}\| \!\cdot\! \|Y D_{N-1}^{i-1} R^\T \|
\end{align}
where, from Lemma \ref{lem:matrix_power},
\begin{align}
	\|Y D_{N-1}^{i-1} R^\T \| &\leq c \cdot \|T\| \!\cdot \|T^{-1}\| \cdot \left(\frac{\rho(D_{N-1})+1}{2}\right)^{i-1}
\end{align}
And since the spectral radius of $D_{N-1}$ is strictly less than $1$, we find that all terms in \eqref{eq:Transient_jordan_form}, with the exception of $p p^\T$, will decay to zero at an exponential rate. We can ignore the decaying terms since the convergence rate will be dominated by a slower term that decays at the rate $i^{-2\lambda_{\min}\mu}$ in $K_{0,i}$.
To see this, we first write down the transient term as:
\begin{align}
	\E \|\widetilde{\sw}_{0}\|^2_{\Omega_{0,i}} 
	&\sim  \frac{1}{2} \E\left\{\widetilde{\sw}_0^\T \left(p p^\T \otimes \Phi K_{0,i} \Phi^\T\right) \widetilde{\sw}_0\right\} \nonumber\\
	&= \frac{1}{2} \E\left\{\widetilde{\sw}_0^\T (p \otimes \Phi) (I_N \otimes K_{0,i}) (p \otimes \Phi)^\T  \widetilde{\sw}_0\right\}
	\label{eq:transient_second_to_last}
\end{align} 
The notation $a(i)\sim b(i)$ is defined after \eqref{eq:asymptotic_theorem_result_sub}.  We now introduce the linear transformation, $\widetilde{\sw}_0'$, of the initial error vector $\widetilde{\sw}_0$:
\begin{align}
\widetilde{\sw}_0' \triangleq (p \otimes \Phi)^\T \widetilde{\sw}_0
\end{align}
This transformation allows us to rewrite  \eqref{eq:transient_second_to_last} as
\begin{align}
\E \|\widetilde{\sw}_0\|^2_{\Omega_{0,i}} &\sim \frac{1}{2} \E\left\{\widetilde{\sw}_0'^\T (I_N \otimes K_{0,i}) \widetilde{\sw}_0'\right\} \nonumber\\
&= \frac{1}{2} \sum_{k=1}^N \E \{\widetilde{\w}_{0,k}'^\T K_{0,i} \widetilde{\w}_{0,k}'\} \label{eq:final_transient}
\end{align}
where $\widetilde{\w}_{0,k}'$ is the $k$-th $M\times 1$ block of the block-vector $\widetilde{\sw}_{0}'$, where $\widetilde{\sw}_{0}' = \mathrm{col}\{\widetilde{\w}_{0,1}',\ldots,\widetilde{\w}_{0,N}'\}$.

The only remaining dependence on the iteration is now embedded in $K_{0,i}$ as defined in \eqref{eq:K_i}. Examining this diagonal matrix we have that:
\begin{align}
K_{0,i}	&=\! \mathrm{diag}\!\left\{\!\!\lambda_{1}\prod_{j=1}^{i-1}(1\!-\!\mu(j)\lambda_{1})^{2}, \ldots, \lambda_{M}\prod_{j=1}^{i-1}(1\!-\!\mu(j)\lambda_{M})^{2}\!\!\right\} \label{eq:K_0,i_diag}
\end{align}
where $\lambda_m$ is the $m$-th eigenvalue of the matrix $H$. Substituting \eqref{eq:K_0,i_diag} into \eqref{eq:final_transient}, we obtain:
\begin{align}
	\E \|\widetilde{\sw}_0\|^2_{\Omega_{0,i}} &\sim  \frac{1}{2} \sum_{m=1}^M \left[\!\lambda_{m}\prod_{j=1}^{i-1}(1\!-\!\mu(j)\lambda_{m})^{2}\!\right]\!  \sum_{k=1}^N\E \|\widetilde{\w}_{0,k,m}'\|^2 \label{eq:final_transient2}
\end{align}
where $\widetilde{\w}_{0,k,m}'$ denotes the $m$-th element of the vector $\widetilde{\w}_{0,k}'$, so that $\widetilde{\w}_{0,k}' = \mathrm{col}\{\widetilde{\w}_{0,k,1}', \ldots, \widetilde{\w}_{0,k,M}'\}$. In order to determine the rate of convergence of the quantity $\lambda_{m}\prod_{j=1}^{i-1}(1\!-\!\mu(j)\lambda_{m})^{2}$, we appeal to Lemma \ref{lem:bounds} (Appendix \ref{app:lemmas}). We use \eqref{eq:scalar_product_transient} in conjunction with \eqref{eq:final_transient2} to conclude that the transient term can be lower- and upper-bounded by:
\begin{align}
	\E \|\widetilde{\sw}_0\|^2_{\Omega_{0,i}} &\geq \frac{1}{2} \sum_{m=1}^M \lambda_m s_m \underline{c}_{m,i} \sum_{k=1}^N \E\|\widetilde{\w}_{0,k,m}'\|^2 \nonumber\\
											  &= \frac{1}{2} \sum_{m=1}^M  \frac{e^{2\sum_{j=1}^{\left\lceil \lambda_m \mu \right\rceil+1}\log\left(1\!-\!\frac{\lambda_m \mu}{j}\right)} \cdot \left(1-\frac{\lambda_m\mu}{i}\right)^{2i}}{(i\!-\!\lambda_m\mu)^{2\lambda_m\mu}} \cdot\nonumber\\
											  &\quad\quad\ \frac{\lambda_m(\left\lceil\lambda_m\mu\right\rceil\!-\!\lambda_m\mu\!+\!1)^{2\lambda_m\mu}}{\left(1-\frac{\lambda_m\mu}{\left\lceil\lambda_m\mu\right\rceil+1}\right)^{2(\left\lceil\lambda_m\mu\right\rceil+1)}} \sum_{k=1}^N \E\|\widetilde{\w}_{0,k,m}'\|^2 \label{eq:transient_lower_bound1}\\
											  &\stackrel{(a)}{\sim} \frac{1}{2} \sum_{m=1}^M  \frac{e^{2\sum_{j=1}^{\left\lceil \lambda_m \mu \right\rceil+1}\log\left(1-\frac{\lambda_m \mu}{j}\right)} \cdot e^{-2\lambda_m\mu}}{(i-\lambda_m\mu)^{2\lambda_m\mu}} \cdot \nonumber\\
											  &\quad\quad\!\frac{\lambda_m(\left\lceil\lambda_m\mu\right\rceil-\lambda_m\mu+1)^{2\lambda_m\mu}}{\left(1-\frac{\lambda_m\mu}{\left\lceil\lambda_m\mu\right\rceil+1}\right)^{2(\left\lceil\lambda_m\mu\right\rceil+1)}} \!\sum_{k=1}^N \E\|\widetilde{\w}_{0,k,m}'\|^2 \label{eq:transient_lower_bound} \\
	\E \|\widetilde{\sw}_0\|^2_{\Omega_{0,i}} &\leq \frac{1}{2} \sum_{m=1}^M \lambda_m s_m \bar{c}_{m,i} \sum_{k=1}^N \E\|\widetilde{\w}_{0,k,m}'\|^2 \nonumber\\
	&= \frac{1}{2} \!\!\sum_{m=1}^M  \!\!\frac{e^{2\sum_{j=1}^{\left\lceil \lambda_m \mu \right\rceil+1}\log\left(1-\frac{\lambda_m \mu}{j}\right)}\!\!\left(1\!-\!\frac{\lambda_m\mu}{i+1}\right)^{\!2(i+1)}}{(i+1-\lambda_m\mu)^{2\lambda_m\mu}} \cdot \nonumber\\
	&\quad\quad\ \frac{\lambda_m(\left\lceil\lambda_m\mu\right\rceil\!-\!\lambda_m\mu\!+\!2)^{2\lambda_m\mu}}{\left(1\!-\!\frac{\lambda_m\mu}{\left\lceil\lambda_m\mu\right\rceil+2}\right)^{2(\left\lceil\lambda_m\mu\right\rceil+2)}} \sum_{k=1}^N \E\|\widetilde{\w}_{0,k,m}'\|^2 \label{eq:transient_upper_bound1}\\
	&\stackrel{(b)}{\sim} \frac{1}{2} \sum_{m=1}^M  \frac{e^{2\sum_{j=1}^{\left\lceil \lambda_m \mu \right\rceil+1}\log\left(1-\frac{\lambda_m \mu}{j}\right)}e^{-2\lambda_m\mu}}{(i+1-\lambda_m\mu)^{2\lambda_m\mu}} \cdot \nonumber\\
	&\quad\quad\!\frac{\lambda_m(\left\lceil\lambda_m\mu\right\rceil-\lambda_m\mu+2)^{2\lambda_m\mu}}{\left(1-\frac{\lambda_m\mu}{\left\lceil\lambda_m\mu\right\rceil+2}\right)^{2(\left\lceil\lambda_m\mu\right\rceil+2)}} \!\sum_{k=1}^N \!\E\|\widetilde{\w}_{0,k,m}'\|^2 \label{eq:transient_upper_bound}
\end{align}
where steps $(a)$ and $(b)$ are due to the expressions $\left(1-\lambda_m\mu/i\right)^{2i}$ and $\left(1-\lambda_m\mu/(i+1)\right)^{2i+2}$ asymptotically converging to $e^{-2\lambda_m\mu}$ \cite[p.~26]{IntegralsSeries}, which is independent of $i$. In fact, the numerators in \eqref{eq:transient_lower_bound1} and \eqref{eq:transient_upper_bound1} account for the increase in the excess-risk at the beginning of the iterations. Eventually, however, the denominator terms of the form $(i-\lambda_m\mu)^{2\lambda_m\mu}$ and $(i+1-\lambda_m\mu)^{2\lambda_m\mu}$ will overtake the increase in $\left(1-\lambda_m\mu/i\right)^{2i}$ and $\left(1-\lambda_m\mu/(i+1)\right)^{2i+2}$ and the excess-risk will begin to decay from that point onwards. Furthermore, examination of \eqref{eq:transient_lower_bound} and \eqref{eq:transient_upper_bound} shows that the $m$-th term decays at the rate $O(i^{-2\lambda_m\mu})$, making the slowest term vanish at $O(i^{-2\lambda_{\min}\mu})$, where $\lambda_{\min}$ is the smallest eigenvalue of the matrix $H$.

\section{Proof of Theorem \ref{thm:HO2}}
\label{sec:Proof_HO2}

Since the matrices $\mathcal{R}_{d,i}$ and $\B_{j+1}^\T\cdots\B_{i-1}^\T\Sigma\B_{i-1}\cdots\B_{j+1}$ are positive semi-definite as long as $\Sigma$ is positive semi-definite, we have that
\begin{align}
	\mathrm{HO}_2(i) &\leq \sum_{j=1}^{i-1} \Tr\left(\frac{\mathcal{R}_{d,j}}{\mu^2(j)}\right) \Tr\left(\mu^2(j) \Omega_{j,i}\right) &\nonumber\\
	&= \sum_{j=1}^{i-1} \E\left\|\frac{\d_{j}}{\mu(j)}\right\|^2 \Tr\left(\mu^2(j)\Omega_{j,i}\right) \label{eq:HO2_sum1}
\end{align}
where in the first inequality we used the property that $0 \leq \Tr(AB) \leq \Tr(A) \Tr(B)$ for nonnegative matrices $A$ and $B$ (Lemma \ref{lem:trace_inequality} in Appendix \ref{app:lemmas}). To evaluate the convergence rate of $\E \|\d_j/\mu(j)\|^2$, we proceed as follows. First, we denote the block diagonal entries of the matrix $\boldsymbol{\D}_{j-1}$ defined by \eqref{eq:bold_script_H} as 
\begin{align}
\H_{k,j-1}\triangleq \int_{0}^{1}
\nabla_w^2\, J_k(w^o-t\widetilde{\w}_{k,j-1})dt
\end{align}
Then, we note that
\begin{align}
\|\d_j/\mu(j)\| &\leq \|\mathcal{A}^\T\|\cdot \left\|\left[\begin{array}{c}
(H - \boldsymbol{H}_{1,j-1})\widetilde{\w}_{1,j-1} \\ 
\vdots \\ 
(H - \boldsymbol{H}_{N,j-1})\widetilde{\w}_{N,j-1}
\end{array} \right]\right\|	\nonumber\\
&\leq \|\mathcal{A}^\T\|\cdot \sum_{k=1}^N \|H - \boldsymbol{H}_{k,j-1}\| \cdot \|\widetilde{\w}_{k,j-1} \| \label{eq:norm_d}
\end{align}
Using the global Lipschitz condition \eqref{eq:global_Lipschitz_Hessian}, we have that 
\begin{align}
	\|H - \boldsymbol{H}_{k,j-1}\| &= \left\|\int_0^1 \!H \!-\! \nabla^2_w J_k(w^o - t \widetilde{\w}_{k,j-1})dt\right\| \nonumber\\
	&\leq \tau_k \|\widetilde{\w}_{k,j-1}\| \int_0^1 \!t \ dt \nonumber\\
	&= \frac{\tau_k}{2} \|\widetilde{\w}_{k,j-1}\| \label{eq:bound_hdiff}
\end{align}
Substituting into \eqref{eq:norm_d}, we obtain
\begin{align}
	\|\d_j/\mu(j)\| &\leq \frac{1}{2} \cdot \|\mathcal{A}^\T\|\cdot \sum_{k=1}^N \tau_k \|\widetilde{\w}_{k,j-1} \|^2 \label{eq:norm_d2}
\end{align}
so that, upon squaring and using the Cauchy-Schwarz inequality,
\begin{align}
	\|\d_j/\mu(j)\|^2 &\leq \frac{1}{4} \cdot \|\mathcal{A}^\T\|^2\cdot \tau' \cdot \sum_{k=1}^N \|\widetilde{\w}_{k,j-1} \|^4 
\end{align}
where 
\begin{align}
	\tau' \triangleq \sum_{k=1}^N \tau_k^2
\end{align}
Taking the expectation, we obtain
\begin{align}
	\E\|\d_j/\mu(j)\|^2 &\leq \frac{1}{4} \cdot \|\mathcal{A}^\T\|^2\cdot \tau' \cdot \sum_{k=1}^N \E \|\widetilde{\w}_{k,j-1} \|^4 \nonumber\\
	&\leq \frac{N \cdot \tau'}{4} \cdot \|\mathcal{A}^\T\|^2\cdot \|\mathcal{W}^4_{j-1}\|_\infty \nonumber\\
	&\stackrel{(a)}{=} O(j^{-2}),\quad\quad 2\underline{\lambda}\mu > 1 \label{eq:HO_w_step_Thm1}
\end{align}
where $\mathcal{W}_{j-1}^4$ is defined in \eqref{eq:W_4} and step $(a)$ is due to \eqref{eq:fourth_ind}. This implies that there exist constants $d > 0$ and $j_o \geq 1$ such that for all $j \geq j_o$, we have that	$\E\|\d_j/\mu(j)\|^2/j^{-2} < d$. Therefore, we can split the sum in \eqref{eq:HO2_sum1} into two sums:
\begin{equation}
	\mathrm{HO}_2(i) \leq d \cdot \sum_{j=j_o}^{i-1} j^{-2} \Tr\left(\mu^2(j)\Omega_{j,i}\right) + \sum_{j=1}^{j_o-1} \E\left\|\d_{j}\right\|^2 \Tr\left(\Omega_{j,i}\right) \label{eq:HO2_sum_two_terms}
\end{equation}
For the first term, we immediately recognize that it fits in the form required by Lemma \ref{lem:series_of_traces} (Appendix \ref{app:lemmas}) with the identifications: $b \leftarrow 2, \mathcal{L} \leftarrow I, q \leftarrow j_o$. So we conclude that the first term asymptotically becomes:
\begin{align}
	d \cdot \sum_{j=j_o}^{i-1} &j^{-2} \Tr\left(\mu(j)^2\Omega_{j,i}\right) \sim \frac{d\cdot \mu^2\cdot \|p\|^2}{2} \sum_{m=1}^M \lambda_m \alpha_{m,2}(i)  \label{eq:HO_w_first_term1}
\end{align}
where
\begin{align}
	\alpha_{m,2}(i) &= \begin{cases}
																										\frac{i^{-3}}{2\lambda_m \mu -3} + \Theta(i^{-2\lambda_{\min}\mu}), & 1 < 2\lambda_{m}\mu \neq 3\\
																										\frac{\log(i)}{i^{3}}, &2\lambda_{m}\mu = 3
																										\end{cases}
\end{align}
and where we require $2\lambda_{\min} \mu > 1$ since step \eqref{eq:HO_w_step_Thm1} required $2\underline{\lambda}\mu > 1$, which automatically guarantees that condition   $2\lambda_{\min} \mu > 1$ is satisfied since $\underline{\lambda} \leq \lambda_{\min}$. Now  \eqref{eq:HO_w_first_term1} will converge at the rate of its slowest term and, hence, we have that
\begin{align}
d \cdot &\sum_{j=j_o}^{i-1} j^{-2} \Tr\left(\mu(j)^2\Omega_{j,i}\right) \nonumber\\
&\sim \begin{cases}
																										\Theta\left(i^{-3}\right) + \Theta(i^{-2\lambda_{\min}\mu}), & 1 < 2\lambda_{\min}\mu \neq 3\\
																										\Theta\left(\frac{\log(i)}{i^{3}}\right), &2\lambda_{\min}\mu = 3
																										\end{cases}\label{eq:HO_w_first_term}
\end{align}
On the other hand, for the second term of \eqref{eq:HO2_sum_two_terms}, we recall \eqref{eq:A_jordan2}--\eqref{eq:K_i} and observe that the trace can be computed as:
\begin{align}
	\Tr(\Omega_{j,i}) &= \Tr(TD^{i-j}T^{-1}E_{kk}T^{-\T}{D^\T}^{i-j}T^\T \otimes \Phi K_{j,i} \Phi^\T)\nonumber\\
	&\stackrel{(a)}{=} \Tr(TD^{i-j}T^{-1}E_{kk}T^{-\T}{D^\T}^{i-j}T^\T) \times\nonumber\\
	&\quad\ \sum_{m=1}^M \lambda_m \frac{\Gamma^2(i-\lambda_m\mu)}{\Gamma^2(i)}\cdot\frac{\Gamma^2(j+1)}{\Gamma^2(j+1-\lambda_m\mu)} \nonumber\\
	&\stackrel{(b)}{\sim} \Tr(TD^{i-j}T^{-1}E_{kk}T^{-\T}{D^\T}^{i-j}T^\T) \times\nonumber\\
	&\quad\ \sum_{m=1}^M i^{-2\lambda_m \mu}\lambda_m \frac{\Gamma^2(j+1)}{\Gamma^2(j+1-\lambda_m\mu)} \nonumber\\
	&\stackrel{(c)}{\sim} \Tr(TE_{11}T^{-1}E_{kk}T^{-\T}E_{11}T^\T) \times\nonumber\\
	&\quad\ \sum_{m=1}^M i^{-2\lambda_m \mu}\lambda_m \frac{\Gamma^2(j+1)}{\Gamma^2(j+1-\lambda_m\mu)} \nonumber\\
	&\stackrel{(d)}{=} \Tr(p \mathds{1}_N^\T E_{kk}\mathds{1}_N p^\T) \sum_{m=1}^M i^{-2\lambda_m \mu} \frac{\lambda_m\cdot \Gamma^2(j+1)}{\Gamma^2(j+1-\lambda_m\mu)} \nonumber\\
	&= \sum_{m=1}^M i^{-2\lambda_m \mu}\lambda_m \frac{\Gamma^2(j+1)}{\Gamma^2(j+1-\lambda_m\mu)} \|p\|^2 \label{eq:Tr_Gamma}
\end{align}
where step $(a)$ is due to Lemma \ref{lem:bounds} (Appendix \ref{app:lemmas}); step $(b)$ is due to Lemma \ref{lem:Gamma} (Appendix \ref{app:lemmas}); step $(c)$ is due to the fact that $D^{i-j} \rightarrow E_{11}$ for large $i$ (since $j$ will only increase up to $j_o$, which will become small in comparison to large $i$ asymptotically); and step $(d)$ is due to the fact that $T E_{11} T^{-1}$ is a rank-$1$ matrix that is spanned by the left- and right-eigenvectors of $A$ corresponding to the eigenvalue $1$. The left eigenvector is $\mathds{1}_N$ since $A$ is left-stochastic. The right eigenvector is  $p$ and is normalized so that the sum of its entries is unity; i.e., $p^\T \mathds{1}_N = 1$ and $A p = p$. Then, $T E_{11} T^{-1} = p \mathds{1}^\T_N$. Substituting \eqref{eq:Tr_Gamma} into the second term of \eqref{eq:HO2_sum_two_terms}, we obtain:
\begin{align}
	\sum_{j=1}^{j_o-1} &\E\left\|\d_{j}\right\|^2 \Tr\left(\Omega_{j,i}\right) \nonumber\\
	&\sim  \sum_{m=1}^M i^{-2\lambda_m \mu} \sum_{j=1}^{j_o-1}  \E\left\|\d_{j}\right\|^2 \lambda_m \frac{\Gamma^2(j+1)}{\Gamma^2(j+1-\lambda_m\mu)} \|p\|^2 \nonumber\\
	&\stackrel{(a)}{=} O(i^{-2\lambda_{\min} \mu}) \label{eq:HO_2_second_term}
\end{align}
where step $(a)$ is due to the fact that the inner sum has finite bounds. Combining \eqref{eq:HO_w_first_term} and \eqref{eq:HO_2_second_term}, we have that:
\begin{align}
\mathrm{HO}_2(i) \!=\! \begin{cases}
																										\Theta(i^{-3}) \!+\! \Theta(i^{-2\lambda_{\min}\mu}), & 1 < 2\lambda_{\min}\mu \neq 3\\
																										\Theta(\frac{\log(i)}{i^{3}}), &2\lambda_{\min}\mu = 3
																										\end{cases}
\end{align}

\section{Proof of Theorem \ref{thm:HO3}}
\label{sec:Proof_HO3}
The study of the final term of \eqref{eq:unrolled_recursion} is slightly more complicated than that of the others since in this case the matrix $\E\{\mathcal{B}_j \widetilde{\sw}_{j-1} \d_{j}^\T\}$ is no longer symmetric, let alone positive semi-definite. Nevertheless, using Lemma \ref{lem:trace_inequality} (Appendix \ref{app:lemmas}), we can obtain:
\begin{align}
	\mathrm{HO}_3(i) &\leq \sum_{j=1}^{i-1} \Tr\!\left(\mu^2(j)\Omega_{j,i}\right) \left\|\frac{\E\{\B_{j} \widetilde{\sw}_{j-1} \d_{j}^\T\}}{\mu(j)^2}\right\| \label{eq:HO_3_orig_sum}
\end{align}
Now, using Jensen's inequality and the submultiplicative norm property of the spectral norm, we have that
\begin{align}
	&\left\|\frac{\E\{\B_{j} \widetilde{\sw}_{j-1} \d_{j}^\T\}}{\mu^2(j)}\right\| \leq \frac{1}{\mu(j)} \E\{\|\B_{j}\| \cdot \|\widetilde{\sw}_{j-1}\| \cdot \|\d_{j}/\mu(j)\|\} \nonumber\\
	&\leq \frac{1}{\mu(j)}  \E\{\|\A^\T\| \|I\!-\!\mu(j) \D\| \cdot \|\widetilde{\sw}_{j-1}\| \cdot \|\d_{j}/\mu(j)\|\}
\end{align}
Using \eqref{eq:norm_d2}, we obtain
\begin{align}
	\left\|\frac{\E\{\B_{j} \widetilde{\sw}_{j-1} \d_{j}^\T\}}{\mu^2(j)}\right\| &\leq \frac{1}{2 \mu(j)} \|\A^\T\|^2\cdot \|I\!-\!\mu(j) \D\| 
	\cdot\nonumber\\
	&\quad\ \E\left\{\!\|\widetilde{\sw}_{j-1}\| \sum_{k=1}^N \tau_k \|\widetilde{\w}_{k,j-1}\|^2\!\right\}
\end{align}
Now, since $\|\widetilde{\sw}_{j-1}\| \leq \sum_{k=1}^N \|\widetilde{\w}_{k,j-1}\|$, we have that:
\begin{align}
\E&\left\{\|\widetilde{\sw}_{j-1}\| \sum_{k=1}^N \tau_k \|\widetilde{\w}_{k,j-1}\|^2\right\} \nonumber\\
&\quad\leq \E\left\{\left(\sum_{k=1}^N \|\widetilde{\w}_{k,j-1}\|\right) \left(\sum_{k=1}^N \tau_k \|\widetilde{\w}_{k,j-1}\|^2\right)\right\}\nonumber\\
&\quad\stackrel{(a)}{\leq}\sqrt{\E\left|\sum_{k=1}^N \|\widetilde{\w}_{k,j-1}\|\right|^2 \cdot \E\left|\sum_{k=1}^N \tau_k \|\widetilde{\w}_{k,j-1}\|^2\right|^2}\nonumber\\
&\quad\stackrel{(b)}{\leq}\sqrt{N \sum_{k=1}^N \E\|\widetilde{\w}_{k,j-1}\|^2 \cdot \tau' \cdot \sum_{k=1}^N \E\|\widetilde{\w}_{k,j-1}\|^4}
\end{align}
where steps $(a)$-$(b)$ are due to the Cauchy-Schwarz inequality and $\tau' \triangleq \sum_{k=1}^N \tau_k^2$. Now, from Theorem \ref{thm:non_asym_convergence_main}, we know that $\E\|\widetilde{\w}_{k,j-1}\|^2 = O(j^{-1})$ and $\E\|\widetilde{\w}_{k,j-1}\|^4 = O(j^{-2})$ when $2\underline{\lambda}\mu > 1$. Therefore, we can conclude that $\E\left\{\|\widetilde{\sw}_{j-1}\| \sum_{k=1}^N \tau_k \|\widetilde{\w}_{k,j-1}\|^2\right\} = O(j^{-3/2})$ and so
\begin{align}
	\left\|\frac{\E\{\B_{j} \widetilde{\sw}_{j-1} \d_{j}^\T\}}{\mu^2(j)}\right\| = O(j^{-1/2})
\end{align}
when $2\underline{\lambda}\mu > 1$. That is, there exist constants $c > 0$ and $j_1 \geq 1$ such that for all $j\geq j_1$, we have that
\begin{align}
	\left\|\frac{\E\{\B_{j} \widetilde{\sw}_{j-1} \d_{j}^\T\}}{\mu^2(j)}\right\| < c \cdot j^{-1/2}
\end{align}
So we can now mirror the technique in App.~\ref{sec:Proof_HO2} and split the sum in \eqref{eq:HO_3_orig_sum} as
\begin{align}
	\mathrm{HO}_3(i) &\leq c \cdot \sum_{j=j_1}^{i-1} \Tr(j^{-1/2} \mu^2(j) \Omega_{j,i}) + \nonumber\\
	&\quad\ \sum_{j=1}^{j_1-1} \Tr\left(\Omega_{j,i}\right) \left\|\E\{\B_{j} \widetilde{\sw}_{j-1} \d_{j}^\T\}\right\|  \label{eq:HO_3_sum2}
\end{align}
For the first term, we use Lemma \ref{lem:series_of_traces} (Appendix \ref{app:lemmas}) with the identifications $b \leftarrow \frac{1}{2}, \mathcal{L} \leftarrow I, q \leftarrow j_1$ to conclude that
\begin{align}
	c \cdot \sum_{j=j_1}^{i-1} \Tr(j^{-1/2} \mu^2(j) \Omega_{j,i}) \sim \frac{c \cdot \mu^2\cdot \|p\|^2}{2} \sum_{m=1}^M \lambda_m \alpha_{m,1/2}(i)  \label{eq:HO_3_first_term1}
\end{align}
where
\begin{align}
	\alpha_{m,2}(i) &= \begin{cases}
																										\frac{i^{-3/2}}{2\lambda_m \mu -3} + \Theta(i^{-2\lambda_m\mu}), & 1 < 2\lambda_{m}\mu \neq 3/2\\
																										\frac{\log(i)}{i^{3/2}}, &2\lambda_{m}\mu = 3/2
																										\end{cases}
\end{align}
Since \eqref{eq:HO_3_first_term1} will converge at the rate of its slowest term, we have that:
\begin{align}
c \cdot &\sum_{j=j_1}^{i-1} j^{-1/2} \Tr\left(\mu(j)^2\Omega_{j,i}\right) \nonumber\\
&\sim \begin{cases}
																										\!\Theta\left(i^{-3/2}\right) \!+\! \Theta(i^{-2\lambda_{\min}\mu}), & 1 < 2\lambda_{\min}\mu \neq 3/2\\
																										\!\Theta\left(\frac{\log(i)}{i^{3/2}}\right), &2\lambda_{\min}\mu = 3/2
																										\end{cases}\label{eq:HO_3_first_term}
\end{align}

For the second term, we substitute \eqref{eq:Tr_Gamma} to obtain
\begin{align}
 &\sum_{j=1}^{j_1-1}\Tr\left(\Omega_{j,i}\right) \left\|\E\{\B_{j} \widetilde{\sw}_{j-1} \d_{j}^\T\}\right\| \nonumber\\
 &\quad\ \sim \sum_{m=1}^M \!i^{-2\lambda_m \mu}\lambda_m \!\!\sum_{j=1}^{j_1-1} \left\|\E\{\B_{j} \widetilde{\sw}_{j-1} \d_{j}^\T\}\right\| \!\cdot\! \frac{\Gamma^2(j\!+\!1)\cdot \|p\|^2}{\Gamma^2(j\!+\!1\!-\!\lambda_m\mu)} \nonumber\\
 &\quad\ \stackrel{(a)}{=} O(i^{-2\lambda_{\min} \mu}) \label{eq:HO_3_second_term}
\end{align}
where step $(a)$ is due to the fact that $\B_{j}$, $\widetilde{\sw}_{j-1}$, and $\d_{j}$ depend on $j$ only and are independent of $i$ so that this finite sum leads to a constant term that is independent of $i$. Combining \eqref{eq:HO_3_first_term} and \eqref{eq:HO_3_second_term}, we have that:
\begin{align}
\mathrm{HO}_3(i) \!=\! \begin{cases}
																										\!\Theta(i^{-3/2}) \!+\! \Theta(i^{-2\lambda_{\min}\mu}), & \!1 \!<\! 2\lambda_{\min}\mu \!\neq\! \frac{3}{2}\\
																										\!\Theta(\frac{\log(i)}{i^{3/2}}), &2\lambda_{\min}\mu \!=\! \frac{3}{2}
																										\end{cases}
\end{align}

\section{Proof of Theorem \ref{thm:consensus}}
\label{sec:Proof_diffusion_consensus}
The main difference between the dynamics of the diffusion and consensus implementations is in the definition of the $\B_i$ matrices in \eqref{eq:B_i} where
\begin{align}
	{\B_{i}^\textrm{diff}} &\triangleq \A^\T (I_{MN} \!-\! \mu(i) \mathcal{H}) \label{eq:B_i_diff}\\
	{\B_{i}^\textrm{cons}} &\triangleq \A^\T - \mu(i) \mathcal{H}  \label{eq:B_i_cons}
\end{align}
where ${\cal A}$ appears in a multiplicative form in \eqref{eq:B_i_diff} and in an additive form in \eqref{eq:B_i_cons}. The apparently small change in the order in which operations take place within the consensus and diffusion strategies actually leads to significant differences in the evolution of the error vectors over the respective networks leading to worse transient and asymptotic performance for consensus strategies; these conclusions are consistent with results reported in \cite{shine_diffusion_consensus,NOW_ML} albeit for constant step-size adaptation. To examine the differences in behavior, we will consider the network excess-risk:
\begin{align}
	\textrm{ER}(i) \triangleq \frac{1}{N} \sum_{k=1}^N \textrm{ER}_k(i)
\end{align}
When $2\lambda_{\min}\mu > 1$, we can average the asymptotic terms from \eqref{eq:unrolled_recursion} to obtain the following expression for the asymptotic excess-risk (which can also be obtained by substituting $\Sigma = I_N \otimes \frac{1}{2} H$ into \eqref{eq:def_AT}): 
\begin{align}
	\textrm{ER}(i) \!= \!\frac{1}{2 N} \sum_{j=1}^{i-1} \mu^2(j) \Tr\left( \mathcal{H} \left(\!\prod_{t=j+1}^{i-1} \!\!\!\!\B_t^\T\!\!\right)^{\!\!\!\!\T} \!\!\G\!\! \left(\!\prod_{t=j+1}^{i-1} \!\!\!\!\B_t^\T\right)\!\!\!\right)
	\label{eq:Trace_diffusion_consensus}
\end{align}
as $i\rightarrow\infty$ where $\B_t$ is either ${\B_{t}^\textrm{diff}}$ or ${\B_{t}^\textrm{cons}}$, depending on which algorithm we wish to examine. Likewise, the matrix $\G$ is either $\G^\textrm{diff}$ or $\G^\textrm{cons}$, depending on the algorithm, where
$\G^\textrm{diff} \triangleq \A^\T \R_g^o \A$ and $\G^\textrm{cons} \triangleq \R_g^o$. Table \ref{tbl:Diffusion_Consensus} lists the parameters for both implementations. 

For simplicity, we assume $A$ is symmetric; more generally, we can consider combination policies $A$ that are close-to-symmetric and employ arguments similar to \cite{shine_diffusion_consensus}. The final conclusion will be similar to the arguments given here.  Now, since $A$ is primitive, the matrix $D$ in the Jordan canonical form of $A = T D T^{-1}$ will be diagonal with a single eigenvalue at one and with all other eigenvalues strictly inside the unit circle. We let the vectors $r_k$ and  $y_{k}$ for $k=\{1,\ldots,N\}$ represent the right and left eigenvectors, respectively, of the matrix $A^\T$ corresponding to the eigenvalue $D_{kk}$ (the $k$-th diagonal element of $D$), i.e.,
$A^\T r_k = D_{kk} r_k, y_k^\T A^\T = D_{kk} y_k^\T$, where we normalize the vectors so that $r_k^{\T} y_{l}=\delta_{k,l}$. In fact, since $A$ is symmetric, the eigenvectors $\{r_k\}$ are themselves orthonormal, as well as the eigenvectors $\{y_k\}$. When $A p = p$, then $D_{11}=1$, $r_1 = \mathds{1}_N$, and $y_1 = p$. Furthermore, let $\{s_m, m=1,\ldots,M\}$ denote the eigenvectors of the matrix $H$, i.e.,
$H s_m = \lambda_m s_m$,
where $\lambda_m$ is the $m$-th eigenvalue of $H$ and the eigenvectors $s_m$ are normalized so that $\|s_m\|^2 = 1$. We observe that the matrices ${\B_{i}^\textrm{diff}}$ and ${\B_{i}^\textrm{cons}}$ share the same eigenvectors but have different eigenvalues. We denote the eigenvectors of $\B_i$ by $y_{l,m}^b$. They can be found to be 
$r_{l,m}^b = r_l \otimes s_m$ and $y_{l,m}^b = y_l \otimes s_m$ \cite{shine_diffusion_consensus,NOW_ML}.
\begin{table}
	\renewcommand{\arraystretch}{2.2}
	\caption{Variables in the Diffusion and Consensus Implementations.}
	\centering
	\begin{tabular}{c||c|c}
	\hline \hline
	\rowcolor[gray]{0.9}{\small{Algorithm}} & {\small{Diffusion \eqref{eq:A}-\eqref{eq:C2}}} & {\small{Consensus \eqref{eq:consensus}}} \\ 
	\hline
	$\B_i$ & $\A^\T (I_{MN} - \mu(i) \D)$ & $\A^\T - \mu(i) \D$ \\ 
	\hline 
	$\lambda_{k,m}(\B_i)$ & $D_{kk} (1-\mu(i) \lambda_m)$ & $D_{kk}-\mu(i) \lambda_m$ \\ 
	\hline 
	$\G$ & $\A^\T \R_g^o \A$ & $\R_g^o$ \\ 
	\hline 
	${y_{k,m}^{b \T}} \G y_{k,m}^b$ & $D_{kk}^2 {y_{k,m}^{b \T}} \mathcal{R}_g^o y_{k,m}^b$  & ${y_{k,m}^{b \T}} \mathcal{R}_g^o y_{k,m}^b$  \\ 
	\hline \hline
	\end{tabular} 
	\label{tbl:Diffusion_Consensus}
	\vspace*{-1\baselineskip}
\end{table}
We now introduce the eigendecomposition 
\begin{align}
	\B_i = \sum_{k=1}^N \sum_{m=1}^M r_{k,m}^b {y_{k,m}^{b \T}} \lambda_{k,m}(\B_i)
\end{align}
where the expressions for the eigenvalues $\lambda_{k,m}(\B_i)$ are listed in Table \ref{tbl:Diffusion_Consensus} for the diffusion and consensus strategies. Now, since the eigenvectors are orthonormal, we have that the finite product of $\B_t$ matrices is:
\begin{align}
	\prod_{t=j+1}^{i-1} \B_t = \sum_{k=1}^N \sum_{m=1}^M r_{k,m}^b {y_{k,m}^{b \T}} \left(\prod_{t=j+1}^{i-1} \lambda_{k,m}(\B_t)\right)
	\label{eq:product_B}
\end{align}
We substitute \eqref{eq:product_B} into \eqref{eq:Trace_diffusion_consensus} to get
\begin{align}
	\textrm{ER}(i) &= \frac{\mu^2}{2N} \sum_{j=1}^{i-1} \!\frac{1}{j^2} \!\Tr\Bigg(\!\!\mathcal{H} \sum_{k=1}^N \!\sum_{\ell=1}^N \!\sum_{m_1=1}^M \!\sum_{m_2=1}^M y_{k,m_1}^{b \T} \mathcal{G} y_{\ell,m_2}^{b} \times \nonumber\\
	&\quad\quad\ r_{k,m_1}^b  r_{\ell,m_2}^{b \T} \bigg(\prod_{t=j+1}^{i-1} \!\!\lambda_{k,m_1}(\B_t)\!\!\bigg)\!\!\bigg(\prod_{t=j+1}^{i-1} \!\!\lambda_{\ell,m_2}(\B_t)\!\!\bigg)\!\!\Bigg)\nonumber\\
	&\stackrel{(a)}{=} \frac{\mu^2}{2N} \sum_{j=1}^{i-1} \frac{1}{j^2} \Tr\Bigg( \sum_{k=1}^N \sum_{\ell=1}^N \sum_{m_1=1}^M \sum_{m_2=1}^M  {y_{k,m_1}^{b \T}} \mathcal{G} {y_{\ell,m_2}^{b}} \times\nonumber\\
	&\quad\quad\ (r_k^\T r_\ell \otimes s_{m_1}^\T H s_{m_2})\! \bigg(\!\prod_{t=j+1}^{i-1} \lambda_{k,m_1}(\B_t)\!\bigg)\times\nonumber\\
	&\quad\quad\quad\ \bigg(\!\prod_{t=j+1}^{i-1} \lambda_{\ell,m_2}(\B_t)\!\bigg)\!\!\Bigg)\nonumber\\
	&\stackrel{(b)}{=} \frac{\mu^2}{2N} \sum_{j=1}^{i-1} \frac{1}{j^2} \Tr\Bigg( \sum_{k=1}^N \sum_{\ell=1}^N (\|r_k\|^2 \otimes \lambda_m) {y_{k,m}^{b \T}} \mathcal{G} {y_{k,m}^{b}} \times\nonumber\\
	&\quad\quad\quad\quad\quad\quad\quad\quad\quad\quad\quad\ \bigg(\prod_{t=j+1}^{i-1} \lambda_{k,m}^2(\B_t)\bigg)\!\!\Bigg) \nonumber\\
	&= \frac{\mu^2}{2 N} \sum_{k=1}^N \sum_{m=1}^M  \lambda_m \cdot \|r_k\|^2 \cdot y_{k,m}^{b\T} \G y_{k,m}^b \times\nonumber\\
	&\quad\quad\quad\quad\quad\quad\quad\sum_{j=1}^{i-1}\frac{1}{j^2}\prod_{t=j+1}^{i-1} \!\!\!\lambda_{k,m}^2(\B_t)  
\end{align} 
where step $(a)$ is due to $\mathcal{H} = I_N \otimes H$ and $r_{k,m} = r_k \otimes s_m$ and step $(b)$ is due to the fact that $D$ is assumed to be diagonalizable (and $H$ is diagonalizable since it is symmetric), which implies that $r_k^\T r_\ell = \|r_k\|^2 \delta_{k,\ell}$ and $s_{m_1}^\T H s_{m_2} = \lambda_k \delta_{m_1,m_2}$, where $\delta_{i,j} = 1$ only when $i=j$ and is zero otherwise. Using Lemma \ref{lem:bounds} (Appendix \ref{app:lemmas}) and the asymptotic expansion in \eqref{eq:asym_upper_gamma}, we can write:
\begin{align}
	\textrm{ER}^\textrm{diff}(i) &\stackrel{(a)}{=} \frac{\mu^2}{2 N} \sum_{k=1}^N \sum_{m=1}^M   \lambda_m D_{kk}^2 \|r_k\|^2 \cdot  y_{k,m}^{b\T}\mathcal{R}_g^oy_{k,m}^{b} \times\nonumber\\
	&\quad\quad\quad\quad\quad\quad\quad\ \sum_{j=1}^{i-1}\frac{1}{j^2}\prod_{t=j+1}^{i-1}\lambda_{k,m}^2(\B_t)\nonumber\\
	&\stackrel{(b)}{=} \frac{\mu^2}{2 N} \sum_{k=1}^N \sum_{m=1}^M   \!\!\lambda_m D_{kk}^2 \|r_k\|^2 \cdot  y_{k,m}^{b\T}\mathcal{R}_g^oy_{k,m}^{b} \times\nonumber\\
	&\quad\quad\quad\quad\quad\  \sum_{j=1}^{i-1} D_{kk}^{2(i-j-1)} j^{-2} \!\!\prod_{t=j+1}^{i-1} (1-\mu(j)\lambda_m)^2 \nonumber\\
	&\stackrel{(c)}{=} \frac{\mu^2}{2 N} \sum_{k=1}^N \sum_{m=1}^M   \!\!\lambda_m \|r_k\|^2 \cdot  y_{k,m}^{b\T}\mathcal{R}_g^oy_{k,m}^{b} \times\nonumber\\
	&\quad\ \sum_{j=1}^{i-1} D_{kk}^{2(i-j)} j^{-2} \frac{\Gamma^2(i\!+\!1\!-\!\lambda_m \mu)}{\Gamma^2(i+1)} \!\cdot\! \frac{\Gamma^2(j+1)}{\Gamma^2(j\!+\!1\!-\!\lambda_m\mu)} \nonumber\\
	&\stackrel{(d)}{=} \frac{\mu^2}{2 N} \sum_{k=1}^N \sum_{m=1}^M   \lambda_m \|r_k\|^2 \cdot  y_{k,m}^{b\T}\mathcal{R}_g^oy_{k,m}^{b} i^{-2\lambda_m\mu} \times\nonumber\\
	&\quad\quad\quad\quad\quad\quad\ \sum_{j=1}^{i-1} D_{kk}^{2(i-j)}  \frac{\Gamma^2(j)}{\Gamma^2(j+1-\lambda_m\mu)} \label{eq:ER_diff}
\end{align}
where step $(a)$ is due to the last line of Table \ref{tbl:Diffusion_Consensus}; step $(b)$ is due to the second line of Table \ref{tbl:Diffusion_Consensus}; step $(c)$ is due to \eqref{eq:SS_finite_product_scalar}; and step $(d)$ is due to \eqref{eq:asym_frac_Gamma} and the property $x\cdot \Gamma(x) = \Gamma(x+1)$.

For the consensus algorithm, we have:
\begin{align}
\textrm{ER}^\textrm{cons}(i) &\stackrel{(a)}{=} \frac{\mu^2}{2 N} \sum_{k=1}^N \sum_{m=1}^M   \lambda_m \|r_k\|^2 \cdot  y_{k,m}^{b\T}\mathcal{R}_g^oy_{k,m}^{b} \times\nonumber\\
&\quad\quad\quad\quad\quad\quad\quad\ \sum_{j=1}^{i-1}\frac{1}{j^2}\prod_{t=j+1}^{i-1}\lambda_{k,m}^2(\B_t)\nonumber\\
	&\stackrel{(b)}{=} \frac{\mu^2}{2 N} \sum_{k=1}^N \sum_{m=1}^M   \lambda_m  \|r_k\|^2 \cdot  y_{k,m}^{b\T}\mathcal{R}_g^oy_{k,m}^{b} \times\nonumber\\
	&\quad\quad\quad\quad\quad\quad\quad\ \sum_{j=1}^{i-1} j^{-2} \prod_{t=j+1}^{i-1} (D_{kk}-\mu(j)\lambda_m)^2 \nonumber\\
	&\stackrel{(c)}{=} \frac{\mu^2}{2 N} \sum_{k=1}^N \sum_{m=1}^M   \!\!\lambda_m \|r_k\|^2 \cdot  y_{k,m}^{b\T}\mathcal{R}_g^oy_{k,m}^{b} \times\nonumber\\
	&\quad\quad\quad\ \sum_{j=1}^{i-1} D_{kk}^{2(i-j-1)} j^{-2} \frac{\Gamma^2(i+1-\lambda_m \mu D_{kk}^{-1})}{\Gamma^2(i+1)} \cdot\nonumber\\
	&\quad\quad\quad\quad\quad\quad\quad\quad\quad\quad\quad \frac{\Gamma^2(j+1)}{\Gamma^2(j+1-\lambda_m\mu D_{kk}^{-1})} \nonumber\\
	&\stackrel{(d)}{=} \frac{\mu^2}{2 N} \sum_{k=1}^N \sum_{m=1}^M   \!\!\lambda_m \|r_k\|^2 \cdot  y_{k,m}^{b\T}\mathcal{R}_g^oy_{k,m}^{b}   \times\nonumber\\
	&\quad\quad i^{-2\lambda_m\mu D_{kk}^{-1}}\sum_{j=1}^{i-1}  \frac{D_{kk}^{2(i-j-1)} \cdot \Gamma^2(j)}{\Gamma^2(j+1-\lambda_m\mu D_{kk}^{-1})} \label{eq:ER_cons}
\end{align}
where step $(a)$ is due to the last line of Table \ref{tbl:Diffusion_Consensus}; step $(b)$ is due to the second line of Table \ref{tbl:Diffusion_Consensus}; step $(c)$ is due to \eqref{eq:SS_finite_product_scalar}; and step $(d)$ is due to \eqref{eq:asym_frac_Gamma} and the property $x\cdot \Gamma(x) = \Gamma(x+1)$.
 
By comparing \eqref{eq:ER_diff} and \eqref{eq:ER_cons}, we observe that the equations for the asymptotic expected excess-risk for the diffusion and consensus strategies are identical except for the most inner summation. When $D_{kk}=1$, the summands inside the most inner terms are identical. In fact, this variation  is the key to the performance difference between the two algorithms. Using Lemma \ref{lem:i^{-2}} (Appendix \ref{app:lemmas}) with $b=0$, we can obtain the following \textit{upper}-bound for the inner summation of the diffusion strategy for any $D_{kk} < 1$:
\begin{align}
	i^{-2\lambda_m\mu} \sum_{j=1}^{i-1} D_{kk}^{2(i-j)}  \frac{\Gamma^2(j)}{\Gamma^2(j+1-\lambda_m\mu)} \leq \frac{1}{\log(D_{kk}^{-1})}\cdot \frac{1}{i^2} \label{eq:ER_UB_diff}
\end{align}
Similarly, using Lemma \ref{lem:i^{-2}} with $b=0$, we can obtain the following \textit{lower}-bound for the inner summation of the diffusion strategy for any $D_{kk} < 1$:
\begin{align}
	D_{kk}^{-2} i^{-2\lambda_m\mu D_{kk}^{-1}} \sum_{j=1}^{i-1}   \frac{D_{kk}^{2(i-j)}\cdot \Gamma^2(j)}{\Gamma^2(j+1-\lambda_m\mu D_{kk}^{-1})} \geq \frac{1}{\log(D_{kk}^{-1})}\cdot \frac{1}{i^2} \label{eq:ER_LB_cons}
\end{align}
as $i\rightarrow\infty$. Therefore, 
\begin{align}
	\textrm{ER}^\textrm{cons}(i)-\textrm{ER}^\textrm{diff}(i) &=  \frac{\mu^2}{2 N} \sum_{k=1}^N \sum_{m=1}^M   \lambda_m \|r_k\|^2 \cdot  y_{k,m}^{b\T}\mathcal{R}_g^oy_{k,m}^{b}\times\nonumber\\
	&\!\!\!\!\!\!\!\!\!\!\!\!\!\!\!\!\!\!\!\!\!\!\!\!\!\!\!\!\!\!\!\!\!\!\!\! \left(i^{-2\lambda_m\mu D_{kk}^{-1}} \sum_{j=1}^{i-1} D_{kk}^{2(i-j-1)}  \frac{\Gamma^2(j)}{\Gamma^2(j+1-\lambda_m\mu D_{kk}^{-1})} -\right.\nonumber\\
	&\left.i^{-2\lambda_m\mu} \sum_{j=1}^{i-1} D_{kk}^{2(i-j)}  \frac{\Gamma^2(j)}{\Gamma^2(j+1-\lambda_m\mu)}\right)\nonumber\\
	&\stackrel{(a)}{\geq} \frac{\mu^2}{2 N} \!\sum_{k=1}^N \sum_{m=1}^M   \!\!\lambda_m \|r_k\|^2 \!\cdot\!  y_{k,m}^{b\T}\mathcal{R}_g^oy_{k,m}^{b} \times\nonumber\\
	&\quad\quad\!\left(\frac{1}{\log(D_{kk}^{-1})}\cdot \frac{1}{i^2} - \frac{1}{\log(D_{kk}^{-1})}\cdot \frac{1}{i^2}\right) \nonumber\\
	&= 0
\end{align}
for large $i$, where step $(a)$ is due to \eqref{eq:ER_UB_diff}--\eqref{eq:ER_LB_cons}.

\section{Useful Lemmas}
\label{app:lemmas}
In this appendix, we list several useful results that are used in the later appendices.

\subsection{Matrix Results}
We first introduce the following lemma that relates the norm of a matrix power to the power of its spectral radius.
\begin{lemma}[\textbf{Bound on the norm of a matrix power}]
\label{lem:matrix_power}
Let $A$ denote a matrix whose spectral radius, $\rho(A)$, is strictly less than $1$. Then,
$
	\|A^n\| \leq c\cdot \left(\frac{\rho(A) + 1}{2}\right)^n
$
where $n \in \mathbb{N}$ and $c$ is some positive constant.
\end{lemma}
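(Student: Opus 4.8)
The plan is to reduce the bound on $\|A^n\|$ to the behavior of the powers of a single Jordan block and then to absorb the resulting polynomial-in-$n$ factor into the geometric term $((\rho(A)+1)/2)^n$. First I would write the Jordan canonical factorization $A = T D T^{-1}$, so that $A^n = T D^n T^{-1}$ and hence $\|A^n\| \leq \|T\|\,\|T^{-1}\|\,\|D^n\| = \kappa(T)\,\|D^n\|$, where $\kappa(T) \triangleq \|T\|\,\|T^{-1}\|$ is a fixed constant independent of $n$. Since $D$ is block-diagonal, it suffices to bound the norm of the power of each Jordan block separately and then take the maximum over the finitely many blocks.

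For a single Jordan block $D_k = \lambda_k I + N$ of size $m$, where $N$ is the nilpotent shift with $N^m = 0$, the binomial expansion gives $D_k^n = \sum_{j=0}^{m-1}\binom{n}{j}\lambda_k^{n-j}N^j$. Each coefficient is controlled by $\binom{n}{j}|\lambda_k|^{n-j} \leq n^{j}\rho(A)^{n-j}$, using $\binom{n}{j}\leq n^j$ and $|\lambda_k| \leq \rho(A)$, so that $\|D_k^n\| \leq P(n)\,\rho(A)^n$ for a polynomial $P(\cdot)$ of degree at most $m-1$ whose coefficients may depend on $A$. Combining over all blocks yields $\|A^n\| \leq \kappa(T)\,\tilde{P}(n)\,\rho(A)^n$ for some polynomial $\tilde{P}(\cdot)$.

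The remaining and central step is to absorb the polynomial factor. Set $\epsilon \triangleq (1-\rho(A))/2 > 0$, which is strictly positive precisely because $\rho(A) < 1$, and note $\rho(A)+\epsilon = (\rho(A)+1)/2$. Then $\tilde{P}(n)\,\rho(A)^n / (\rho(A)+\epsilon)^n = \tilde{P}(n)\,(\rho(A)/(\rho(A)+\epsilon))^n \to 0$ as $n\to\infty$, since the geometric ratio $\rho(A)/(\rho(A)+\epsilon)$ is strictly less than one and eventually dominates any polynomial growth. A convergent sequence is bounded, so this ratio admits a finite supremum over all $n$; multiplying that supremum by $\kappa(T)$ produces the desired constant $c$. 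The edge case $\rho(A)=0$ (when $A$ is nilpotent) is immediate, since then $A^n = 0$ for $n \geq m$. I expect the absorption step to be the conceptual crux: it is the strict inequality $\rho(A) < 1$ that creates the slack $\epsilon$ needed to swallow the polynomial factor, which is exactly why the statement is phrased with $(\rho(A)+1)/2$ rather than $\rho(A)$ itself. As a shorter alternative to the Jordan-form route, one could instead invoke Gelfand's formula $\rho(A) = \lim_{n}\|A^n\|^{1/n}$ directly: for the same $\epsilon$ one has $\|A^n\|^{1/n} \leq \rho(A)+\epsilon$ for all $n$ beyond some $N_0$, and the finitely many earlier terms are absorbed into $c$.
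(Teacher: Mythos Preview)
Your argument is correct. The paper's own proof is much shorter: it simply quotes the standard fact (citing Horn and Johnson) that for any $\epsilon>0$ there exists a constant $c$ with $\|A^n\|\le c\,(\rho(A)+\epsilon)^n$, and then specializes to $\epsilon=(1-\rho(A))/2$. This is exactly the ``shorter alternative'' you mention at the end via Gelfand's formula, so that part of your proposal coincides with the paper. Your primary route through the Jordan block expansion is a genuinely different, more self-contained derivation: it makes explicit where the polynomial-in-$n$ factor comes from and why the slack $\epsilon$ is needed to absorb it, at the cost of more bookkeeping (tracking $\kappa(T)$, the nilpotent shifts, and the edge case $\rho(A)=0$). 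The paper's citation-based proof buys brevity; your Jordan-form proof buys transparency about the constants and requires no external reference.
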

\begin{proof}
	From \cite[p.~299]{HornJohnsonVol1}, we have
	$\|A^n\| \leq c\cdot(\rho(A) + \epsilon)^n$
	for any $\epsilon > 0$. Now let $\epsilon = (1-\rho(A))/2$.
\end{proof}

We can also bound the trace of the product of two matrices.
\begin{lemma}[\textbf{A trace inequality}]
	\label{lem:trace_inequality}
	Given a positive semi-definite matrix $A \in \mathbb{R}^{R \times R} \geq 0$ and a matrix $B \in \mathbb{R}^{R \times R}$, it holds that:
	\begin{align}
		\Tr(A B) \leq \Tr(A) \cdot \|B\| \label{eq:trace_inequality}
	\end{align}
	where $\|\cdot\|$ is the $2-$induced norm (or maximum singular value). Furthermore, when $B$ is also positive semi-definite, we have that:
	\begin{align}
	0 \leq \Tr(A B) \leq \Tr(A) \cdot \|B\| \leq \Tr(A) \cdot \Tr(B) \label{eq:trace_inquality_PSD}
	\end{align}
\end{lemma}
\begin{proof}
Relation \eqref{eq:trace_inequality} was proved in \cite{general_trace_inequality}. When the matrix $B$ is also positive semi-definte, we have from \cite{Athans,trace_inequality} that:
\begin{align} 
\lambda_{\min}(B) \Tr(A) \leq \Tr(A B) \leq \lambda_{\max}(B)\cdot \Tr(A) = \|B\| \cdot \Tr(A) \nonumber
\end{align}
Since the matrix $B$ is positive semi-definite, we have that $\lambda_{\min}(B) \geq 0$. Furthermore, we may use the inequality $\lambda_{\max}(B) \leq \Tr(B)$ since $B$ is positive semi-definite to obtain \eqref{eq:trace_inquality_PSD}
\end{proof}

\subsection{Convergence of Inequality Recursions}
We list the following lemma from \cite[p.~45]{Polyak}, which demonstrates the convergence of a deterministic recursion.
\begin{lemma}[\textbf{Convergence of a deterministic recursion}]
\label{lem:Polyak}
	Let a sequence $v(i)$ satisfy 
	\begin{align*}
		&v(i) \leq (1-\eta(i-1)) v(i-1) + \zeta(i-1)\\
		&\sum_{i=0}^\infty \eta(i) = \infty, \quad 0 \leq \eta(i) < 1, \quad  \zeta(i) \geq 0, \quad \frac{\zeta(i)}{\eta(i)} \rightarrow 0.
	\end{align*}
	Then, 
	\begin{align}
		\limsup_{i\rightarrow\infty} v(i) \leq 0
	\end{align}
	and if $v(i)\geq 0$, then $v(i)\rightarrow 0$.\hfill\qed
\end{lemma}

In addition to demonstrating convergence of recursions of the above form, we are also interested in the convergence \textit{rate} for special cases of $\eta(i)$. To this end, we establish the following lemma, which extends Lemma 4 \cite[p.~45]{Polyak}.
\begin{lemma}[\textbf{Convergence rate of a deterministic recursion}]
\label{lem:convergence_deterministic_sequence}
Assume that $f > 0, d > 0, p > 0$ with $f > p$ and let the deterministic sequence $v(i) \geq 0$ satisfy
\begin{align}
	v(i) \leq \left(1-\frac{f}{i} + O\bigg(\frac{1}{i^{2}}\bigg)\right) v(i-1) + \frac{d}{i^{p+1}} \label{eq:condition_on_v}
\end{align}
Then,
\begin{align}
	\limsup_{i\rightarrow \infty} \frac{v(i)}{i^{-p}} \leq \frac{d}{f-p} \label{eq:rate_lemma}
\end{align}
\end{lemma}
\begin{proof}
Define the sequence 
\begin{align}
	u(i) \triangleq \frac{v(i)}{i^{-p}} - \frac{d}{f-p} \label{eq:definition_ui}
\end{align}
Then, it holds that:
\begin{align}
	u(i+1) &= (i+1)^p v(i+1) - \frac{d}{f-p} \nonumber\\
			&\stackrel{(a)}{\leq} \!\!(i\!+\!1)^p \!\left(\!\!\bigg(1 \!-\! \frac{f}{i\!+\!1} \!+\! O\bigg(\!\frac{1}{i^{2}}\!\bigg)\!\!\bigg) v(i) \!+\! \frac{d}{(i\!+\!1)^{p+1}}\!\right) \!- \nonumber\\
			&\quad\ \frac{d}{f-p} \nonumber\\
			&\stackrel{(b)}{=} \frac{v(i)}{i^{-p}} \Big(\!1\!+\!\frac{p}{i}\!+\!O\Big(\frac{1}{i^{2}}\Big)\!\Big) \Big(1\!-\!\frac{f}{i\!+\!1}\!+\!O\Big(\frac{1}{i^{2}}\Big)\Big)  + \nonumber\\
			&\quad\ \frac{d}{i+1} - \frac{d}{f-p} \nonumber\\
			&\stackrel{(c)}{=} \frac{v(i)}{i^{-p}} \Big(1-(f-p)/(i+1)+O(1/i^{2})\Big) + \nonumber\\
			&\quad\ \frac{d}{i+1} - \frac{d}{f-p} \nonumber\\
			&= \left(1-\frac{f-p}{i+1}+O\left(\frac{1}{i^2}\right)\right) u(i) + O\left(\frac{1}{i^2}\right)
\end{align}
where step $(a)$ is due to \eqref{eq:condition_on_v}, step $(b)$ is due to the series expansion of $(1+1/i)^p$ about $i=\infty$, and step $(c)$ is due to:
\begin{align}
\Big(1+p/i+&O(1/i^{2})\Big) \Big(1-f/(i+1)+O(1/i^{2})\Big) \nonumber\\
&= 1 - (f - p)/(i+1) + O(1/i^{2})
\end{align}
Recall that we assumed $f > p$ and, therefore,
\begin{align}
	\sum_{i=1}^\infty \left(\frac{f-p}{i+1}+O\bigg(\frac{1}{i^{2}}\bigg)\right) &= \infty
\end{align}
and
\begin{align}
	\lim_{i\rightarrow\infty} \frac{O(1/i^{2})}{\frac{f-p}{i+1}+O(1/i^{2})} &= 0
\end{align}
We now call upon Lemma \ref{lem:Polyak} to deduce that $\limsup_{i\rightarrow\infty} u(i) \leq 0$, which leads to  \eqref{eq:rate_lemma}.
\end{proof}

We also have the following stochastic analogue to Lemma \ref{lem:Polyak} \cite[pp.~49--50]{Polyak}.
\begin{lemma}[\textbf{Convergence of a stochastic recursion}]
\label{lem:Gladyshev}
	Let there be a sequence of random variables $\v_0,$ $\ldots,\v_i \geq 0$, $\E \v_0 < \infty$, satisfying 
	\begin{align}
		\E\{\v(i) | \v(0),\ldots,\v(i\!-\!1)\} \leq (1\!-\!\eta(i\!-\!1)) \v(i-1) \!+\! \zeta(i\!-\!1)
	\end{align}
	with
	\begin{align}
		\sum_{k=0}^\infty \eta(i) \!=\! \infty, \; 0 \!\leq\! \eta(i) < 1, \; \zeta(i) \!\geq\! 0,
		\;
		\frac{\zeta(i)}{\eta(i)} \!\rightarrow\! 0, \; \sum_{i=0}^\infty \zeta(i) \!< \! \infty
	\end{align}
	Then, $\v(i) \rightarrow 0$ almost surely.\hfill\qed
\end{lemma}

\subsection{Finite Products and Behavior of Special Functions}
We now examine the behavior of finite products. We will observe that these products are closely related to different forms of the Gamma function defined in \eqref{eq:def:Gamma} \cite{IntegralsSeries}.
\begin{lemma}[\textbf{Bounds and identities on finite products}]
\label{lem:bounds}
Let $\mu > 0$, $\lambda_m > 0$, and $1 \leq j < i$, it holds that
\begin{align}
	\prod_{t=j+1}^i \left(1-\frac{\lambda \mu}{t}\right)^2 &= \frac{\Gamma^2(i+1-\lambda \mu)}{\Gamma^2(i+1)}\cdot\frac{\Gamma^2(j+1)}{\Gamma^2(j+1-\lambda \mu)}
	\label{eq:SS_finite_product_scalar}
\end{align}
where the Gamma function is defined by:
\begin{align}
	\Gamma(x) \triangleq \int_0^\infty t^{x-1} e^{-t} dt \label{eq:def:Gamma}
\end{align}

Furthermore, when $i$ is large, it holds that:
\begin{align}
		\underline{c}_{m,i} \cdot s_m \leq \prod_{j=1}^i \left(1-\frac{\lambda_m \mu}{j}\right)^2 \leq 
		\bar{c}_{m,i} \cdot s_m \label{eq:scalar_product_transient}
\end{align}
where
\begin{align}
	s_m &\triangleq e^{2\sum_{j=1}^{\left\lceil \lambda_m \mu \right\rceil+1}\log\left(1-\frac{\lambda_m \mu}{j}\right)} \label{eq:s_m}\\
	\underline{c}_{m,i}  &\triangleq \frac{f_{m,i}\cdot (\left\lceil\lambda_m\mu\right\rceil-\lambda_m\mu+1)^{2\lambda_m\mu}}{\left(1-\frac{\lambda_m\mu}{\left\lceil\lambda_m\mu\right\rceil+1}\right)^{2(\left\lceil\lambda_m\mu\right\rceil+1)}}\\
	\bar{c}_{m,i} &\triangleq \frac{f_{m,i+1}\cdot(\left\lceil\lambda_m\mu\right\rceil-\lambda_m\mu+2)^{2\lambda_m\mu}}{\left(1-\frac{\lambda_m\mu}{\left\lceil\lambda_m\mu\right\rceil+2}\right)^{2(\left\lceil\lambda_m\mu\right\rceil+2)}}\\
	f_{m,i} &\triangleq \frac{\left(1-\frac{\lambda_m\mu}{i}\right)^{2i}}{(i-\lambda_m\mu)^{2\lambda_m\mu}}
\end{align}
and $\lceil \cdot \rceil$ indicates the ceiling operator and $\log(\cdot)$ denotes the natural logarithm. 
\end{lemma}
\begin{proof}
For \eqref{eq:SS_finite_product_scalar}, we observe using the property  $\Gamma(x+1) = x \cdot \Gamma(x)$ that
	\begin{align}
		(t-\lambda\mu)^2 &= \frac{\Gamma^2(t+1-\lambda\mu)}{\Gamma^2(t-\lambda\mu)}
	\end{align}
	and, therefore,
	\begin{align}
		\prod_{t=j+1}^i \!\!(t-\lambda\mu)^2 &= \frac{\Gamma^{2}(j+1+1-\lambda\mu)}{\Gamma^{2}(j+1-\lambda\mu)}\!\cdots\!\frac{\Gamma^{2}(i+1-\lambda\mu)}{\Gamma^{2}(i-\lambda\mu)} \nonumber\\
		&= \frac{\Gamma^{2}(i+1-\lambda\mu)}{\Gamma^{2}(j+1-\lambda\mu)} \label{eq:product_expansion}
	\end{align}
	and, in the special case when $\lambda\mu=0$, 
	\begin{align}
		\prod_{t=j+1}^i t^2 &= \frac{\Gamma^{2}(j+1+1)}{\Gamma^{2}(j+1)}\!\cdots\!\frac{\Gamma^{2}(i+1)}{\Gamma^{2}(i)} = \frac{\Gamma^{2}(i+1)}{\Gamma^{2}(j+1)} \label{eq:product_expansion_lambdamu_0}
	\end{align}
	Then,
	\begin{align}
		\prod_{t=j+1}^i \left(1-\frac{\lambda \mu}{t}\right)^2 &= \prod_{t=j+1}^i \frac{(t-\lambda \mu)^2}{t^2} \nonumber\\
		&=  \frac{\prod_{t=j+1}^i (t-\lambda \mu)^2}{\prod_{t=j+1}^i t^2}\nonumber\\ 
		&\stackrel{(a)}{=} \frac{\Gamma^{2}(i+1-\lambda\mu)}{\Gamma^{2}(j+1-\lambda\mu)} \cdot \frac{\Gamma^2(j+1)}{\Gamma^2(i+1)}
	\end{align}
	where step $(a)$ is due to \eqref{eq:product_expansion}--\eqref{eq:product_expansion_lambdamu_0}. For \eqref{eq:scalar_product_transient}, we start from 
	\begin{align}
		\prod_{j=1}^i \left(1-\frac{\lambda_m \mu}{j}\right)^2 &= e^{2 \sum_{j=1}^i \log\left(1-\frac{\lambda_m \mu}{j}\right)} \nonumber\\
		&\stackrel{(a)}{=} s_m \cdot e^{2 \sum_{j=\left\lceil\lambda_m\mu\right\rceil+2}^i \log\left(1-\frac{\lambda_m \mu}{j}\right)} \label{eq:intermediate_1}
	\end{align}
	where step $(a)$ is valid since $i$ is assumed to be large and $s_m$ was defined in \eqref{eq:s_m}. Observe that $s_m$ is constant and does not depend on $i$. On the other hand, the second term in \eqref{eq:intermediate_1} can be bounded by using the following integral bounds for any increasing function $f(x)$:
	\begin{align}
		\int_{a-1}^b f(x) dx \leq \sum_{j=a}^b f(j) \leq \int_a^{b+1} f(x) dx \label{eq:integral_bounds_increasing_function}
	\end{align}
	Applying \eqref{eq:integral_bounds_increasing_function} to the function $f(x) = \log\left(1-\frac{\lambda_m \mu}{x}\right)$, which is an increasing function in $x$ for $x > \lambda_m\mu$, we get
	\begin{align}
	\begin{cases}
		 e^{2 \sum_{j=\left\lceil\lambda_m\mu\right\rceil+2}^i \log\left(1-\frac{\lambda_m \mu}{j}\right)} \geq e^{2 \int_{\left\lceil \lambda_m \mu \right\rceil+1}^i \log\left(1-\frac{\lambda_m \mu}{x}\right) dx} \\
		e^{2 \sum_{j=\left\lceil\lambda_m\mu\right\rceil+2}^i \log\left(1-\frac{\lambda_m \mu}{j}\right)} \leq   e^{2 \int_{\left\lceil \lambda_m \mu \right\rceil+2}^{i+1} \log\left(1-\frac{\lambda_m \mu}{x}\right) dx}
	\end{cases}
		\label{eq:trans_bounds_1}
	\end{align}
	We can evaluate the definite integrals by noting that the indefinite integral of $\log(1-\lambda_m\mu/x)$ is given by 
	\begin{align}
		\int \log\bigg(1-\frac{\lambda_m\mu}{x}\bigg)dx &= x\cdot  \log\bigg(1-\frac{\lambda_m\mu}{x}\bigg) - \nonumber\\
		&\quad\ \lambda_m\mu \log(x-\lambda_m\mu) \label{eq:indef_int}
\end{align}		
	Evaluating the integrals in \eqref{eq:trans_bounds_1} using \eqref{eq:indef_int}, we obtain the bounds
	\begin{align}
	\int_{\left\lceil\lambda_m\mu\right\rceil+1}^i \log\left(1-\frac{\lambda_m \mu}{x}\right) dx &= i \log\left(1\!-\!\frac{\lambda_m\mu}{i}\right) - \nonumber\\
	&\!\!\!\!\!\!\!\!\!\!\!\!\!\!\!\!\!\!\!\!\!\!\!\!\!\!\!\!\!\!\!\!\!\!\!\!\!\!\!\!\!\!\!\!\!\!\!\!\!\!\!\!\!\!\!\!\!\!\!\!\!\!\!\!\!\!\!\!\! \lambda_m\mu\log(i\!-\!\lambda_m\mu) \!-\! (\left\lceil\lambda_m\mu\right\rceil\!+\!1)\log\left(1\!-\!\frac{\lambda_m\mu}{\left\lceil\lambda_m\mu\right\rceil+1}\right)\!+\nonumber\\
	&\!\!\!\!\!\!\!\!\!\!\!\!\!\!\!\!\!\!\!\!\lambda_m\mu\log(\left\lceil\lambda_m\mu\right\rceil-\lambda_m\mu+1) 
	\label{eq:log_trans_lowerbound}
	\end{align}
	and
	\begin{align}
	\int_{\left\lceil\lambda_m\mu\right\rceil+2}^{i+1} \log\left(1-\frac{\lambda_m \mu}{x}\right) dx &= (i+1) \log\left(1\!-\!\frac{\lambda_m\mu}{i+1}\right)\! - \nonumber\\
	&\!\!\!\!\!\!\!\!\!\!\!\!\!\!\!\!\!\!\!\!\!\!\!\!\!\!\!\!\!\!\!\!\!\!\!\!\!\!\!\!\!\!\!\!\!\!\!\!\!\!\!\!\!\!\!\!\!\!\!\!\!\!\!\lambda_m\mu\log(i\!+\!1\!-\!\lambda_m\mu)+\lambda_m\mu\log(\left\lceil\lambda_m\mu\right\rceil-\lambda_m\mu+2)-\nonumber\\
	&\!\!\!\!\!\!\!\!\!\!\!\!\!\!\!\!\!\!\!\!\!\!\!\!\!\!\!\!\!\!\!\!\!\!\!\!\!\!\!\!(\left\lceil\lambda_m\mu\right\rceil+2)\log\left(1\!-\!\frac{\lambda_m\mu}{\left\lceil\lambda_m\mu\right\rceil+2}\right)
	\label{eq:log_trans_upperbound}
	\end{align}
	Substituting \eqref{eq:log_trans_lowerbound}-\eqref{eq:log_trans_upperbound} into \eqref{eq:trans_bounds_1}, we find  that the right-hand-sides of \eqref{eq:trans_bounds_1} reduce to 
	\begin{align}
		e^{2 \int_{\left\lceil \lambda_m \mu \right\rceil+1}^i \log\left(1-\frac{\lambda_m \mu}{x}\right) dx} &= \underline{c}_{m,i} \label{eq:LB_second_term}\\
		e^{2 \int_{\left\lceil \lambda_m \mu \right\rceil+2}^{i+1} \log\left(1-\frac{\lambda_m \mu}{x}\right) dx} &= \bar{c}_{m,i} \label{eq:UB_second_term}
	\end{align}		
	Combining \eqref{eq:LB_second_term}--\eqref{eq:UB_second_term} with \eqref{eq:intermediate_1}, we arrive at \eqref{eq:scalar_product_transient}.
\end{proof}

In \eqref{eq:SS_finite_product_scalar}, we established that a finite product can be interpreted in terms of Gamma functions. We are interested in the behavior of the Gamma function in the asymptotic regime for its argument. For this purpose, we call upon the following lemma from \cite{IntegralsSeries,tricomi1951}, which describes some properties of the Gamma function.
\begin{lemma}[\textbf{Properties of Gamma functions}]
\label{lem:Gamma}
The Gamma function satisfies: 
\begin{align}
	\lim_{|s| \rightarrow \infty} \frac{\Gamma^2(s+a)}{\Gamma^2(s+c)} s^{-2(a-c)} &= 1
	\label{eq:asym_frac_Gamma}
\end{align}
for $|\arg(s+a)| < \pi$ and
\begin{equation}
	\Gamma(s,x) = x^{s-1} e^{-x} \left[\sum_{m=0}^{M-1} \frac{(-1)^m \Gamma(1-s+\!m)}{x^m \Gamma(1-s)} + O\!\left(|x|^{-M}\right)\!\right]
	\label{eq:asym_upper_gamma}
\end{equation}
for $|x|\!\rightarrow\!\infty$, $-3\pi/2 < \arg(x) < 3 \pi/2$, and any integer $M\geq 1$. Here, the notation $\Gamma(s,x)$ denotes the upper incomplete Gamma function:
\begin{align}
\Gamma(s,x) \triangleq \int_x^\infty t^{s-1} e^{-t} dt
\end{align} \hfill\QED
\end{lemma}

\noindent Using the above lemma, we can now examine the convergence rate of a \textit{series} of Gamma functions. In Lemma \ref{lem:asym_constant_term}, we observe that a series of a fraction of Gamma functions will converge at different rates, $i^{-\nu}$ or $\log(i)/i$. And in Lemma \ref{lem:i^{-2}} we observe that when each term in the series is weighted exponentially, the convergence rate will become faster.
\begin{lemma}[\textbf{Series of Gamma functions}]
\label{lem:asym_constant_term}
Let $b \geq 0$, $q\geq 1$. Then,
\begin{align}
	\frac{1}{i^{2\lambda_m\mu}} &\sum_{j=q}^{i-1} j^{-b} \frac{\Gamma^2(j)}{\Gamma^2(j+1-\lambda_m\mu)} \nonumber\\
	&\sim \begin{cases}
																										\frac{i^{-b-1}}{2\lambda_m \mu -b - 1} + \Theta(1/i^{2\lambda_m\mu}), & 2\lambda_{m}\mu \neq 1+b\\
																										\frac{\log(i)}{i^{b+1}}, &2\lambda_{m}\mu = 1+b
																										\end{cases} \label{eq:result_thm:asym_const_term_higher_order}
\end{align}
\end{lemma}
\begin{proof}
	Observe that \eqref{eq:asym_frac_Gamma} states that $\Gamma^2(s+a)/\Gamma^2(s+c) \sim s^{2(a-c)}$ for large $s$. Thus, for any $\epsilon_2 > 0$, there exists $q' \geq 2$ such that for all $s\geq q'$, the difference between unity and the ratio of $\Gamma^2(s+a)/\Gamma^2(s+c)$ to its asymptotic function $s^{2(a-c)}$ is small:
		\begin{align}
			\left|\frac{\Gamma^2(s+a)}{\Gamma^2(s+c)}\cdot \frac{1}{s^{2(a-c)}} - 1\right| < \epsilon_2
		\end{align}
		which implies that
		\begin{align}
			(1-\epsilon_2) s^{2(a-c)} < \frac{\Gamma^2(s+a)}{\Gamma^2(s+c)} < (1+\epsilon_2) s^{2(a-c)} \label{eq:bound_using_s_a}
		\end{align}
		Setting $s = j, a = 0, c =  1-\lambda_m\mu$ into \eqref{eq:bound_using_s_a}, we obtain the following bound for the summand in \eqref{eq:result_thm:asym_const_term_higher_order}:		
	\begin{align}
		(1\!-\!\epsilon_2) j^{2\lambda_m\mu - 2-b} < \frac{j^{-b}\cdot \Gamma^2(j)}{\Gamma^2(j+1-\lambda_m\mu)} < (1\!+\!\epsilon_2) j^{2\lambda_m\mu - 2-b} \label{eq:epsilon_upper_lower_series}
	\end{align}
	Therefore, we can split the sum into two terms, one covering values from $q$ to $q'-1$, while the other summing over values from $q'$ to $i-1$:
	\begin{align}
		&\frac{1}{i^{2\lambda_m\mu}} \sum_{j=q}^{i-1}  \frac{j^{-b}\cdot \Gamma^2(j)}{\Gamma^2(j\!+\!1\!-\!\lambda_m\mu)} \nonumber\\
		&= \frac{1}{i^{2\lambda_m\mu}} \sum_{j=q}^{q'-1} \frac{j^{-b} \cdot \Gamma^2(j)}{\Gamma^2(j\!+\!1\!-\!\lambda_m\mu)} \!+\! \frac{1}{i^{2\lambda_m\mu}} \sum_{j=q'}^{i-1}  \frac{j^{-b}\cdot \Gamma^2(j)}{\Gamma^2(j\!+\!1\!-\!\lambda_m\mu)} \nonumber\\
		&\stackrel{(a)}{=} O(i^{-2\lambda_m\mu}) + \frac{1}{i^{2\lambda_m\mu}} \sum_{j=q'}^{i-1} \frac{j^{-b} \cdot \Gamma^2(j)}{\Gamma^2(j+1-\lambda_m\mu)} \label{eq:sum_of_terms_before_bound}
	\end{align}
	where step $(a)$ is due to the fact that the first term is proportional to $i^{-2\lambda_m\mu}$ (with no other dependence on $i$). We would like to obtain the convergence rate of the second term in \eqref{eq:sum_of_terms_before_bound}. For this, using \eqref{eq:epsilon_upper_lower_series}, we have
\begin{align}
	\frac{1}{i^{2\lambda_m\mu}} \sum_{j=q'}^{i-1} \frac{j^{-b} \cdot \Gamma^2(j)}{\Gamma^2(j+1-\lambda_m\mu)} < \frac{1+\epsilon_2}{i^{2\lambda_m\mu}} \sum_{j=q'}^{i-1} j^{2\lambda_m\mu - 2-b} \label{eq:second_term_gamma_frac_UB}
\end{align}	
and
\begin{align}
	\frac{1}{i^{2\lambda_m\mu}} \sum_{j=q'}^{i-1} \frac{j^{-b} \cdot \Gamma^2(j)}{\Gamma^2(j+1-\lambda_m\mu)} > \frac{1-\epsilon_2}{i^{2\lambda_m\mu}} \sum_{j=q'}^{i-1} j^{2\lambda_m\mu - 2-b} \label{eq:second_term_gamma_frac_LB}
\end{align}	
Therefore, we only need to study the behavior of $i^{-2\lambda_m\mu} \sum_{j=q'}^{i-1} j^{2\lambda_m\mu - 2-b}$. We rely on the following integral bounds, which are valid for any positive monotonic function $f(x)$:
	\begin{align}
		\int_{a}^{b} f(x) dx \leq \sum_{i=a}^b f(i) \leq \int_{a-1}^{b+1} f(x) dx \label{eq:integral_bound_general}
	\end{align}
	and use the following indefinite integral for the case where $2\lambda_m\mu - 2 - b \neq 1$:
	\begin{align}
		\int x^{2\lambda_m\mu - 2 -b} dx = \frac{x^{2\lambda_m\mu - 1 - b}}{2\lambda_m\mu -1 -b} + \mathrm{constant} \label{eq:integral_not_zero}
	\end{align}
	to obtain the following bounds 
	\begin{align}
		\frac{1}{i^{2\lambda_m\mu}} \sum_{j=q'}^{i-1} j^{2\lambda_m\mu - 2 - b} &\leq \frac{1}{i^{2\lambda_m\mu}} \int_{q'-1}^{i} x^{2\lambda_m \mu - 2-b} dx \nonumber\\
		&= \frac{i^{-b-1}}{2\lambda_m \mu -b - 1} + O(i^{-2\lambda_m\mu}) \label{eq:not_zero_UB}
	\end{align}
	Substituting \eqref{eq:not_zero_UB} into \eqref{eq:second_term_gamma_frac_UB}, we obtain the following upper-bound for the second term of \eqref{eq:sum_of_terms_before_bound}:
	\begin{align}
		\frac{1}{i^{2\lambda_m\mu}} \!\sum_{j=q'}^{i-1} \!\frac{j^{-b} \cdot \Gamma^2(j)}{\Gamma^2(j+1-\lambda_m\mu)} < \frac{(1\!+\!\epsilon_2)\cdot i^{-b-1}}{2\lambda_m \mu -b - 1} \!+\! O(i^{-2\lambda_m\mu}) \label{eq:second_term_final_UB}
	\end{align}
	Similarly, we can use the integral bound \eqref{eq:integral_bound_general} to obtain the lower-bound:
	\begin{align}
		\frac{1}{i^{2\lambda_m\mu}} \sum_{j=q'}^{i-1} j^{2\lambda_m\mu - 2 - b} &\geq \frac{1}{i^{2\lambda_m\mu}} \int_{q'}^{i-1} x^{2\lambda_m \mu - 2-b} dx \nonumber\\
		&\!\!\!\!\!\!\!\!\!\!\!\!\!\!\!\!\!\!\!\!\!\!\!\!\!= 	\frac{1}{i^{2\lambda_m\mu}} \left(\frac{(i-1)^{2\lambda_m\mu - 1 - b}}{2\lambda_m\mu -1 -b} - \frac{(q')^{2\lambda_m\mu - 1 - b}}{2\lambda_m\mu -1 -b}\right)	\nonumber\\
		&\!\!\!\!\!\!\!\!\!\!\!\!\!\!\!\!\!\!\!\!\!\!\!\!\!\sim \frac{i^{-b-1}}{2\lambda_m \mu -b - 1} + O(i^{-2\lambda_m\mu}) \label{eq:not_zero_LB}
	\end{align}
	Substituting \eqref{eq:not_zero_LB} into \eqref{eq:second_term_gamma_frac_LB}, we obtain the following lower-bound for the second term of \eqref{eq:sum_of_terms_before_bound}:
	\begin{align}
		\frac{1}{i^{2\lambda_m\mu}} \!\sum_{j=q'}^{i-1} \!\frac{j^{-b} \cdot \Gamma^2(j)}{\Gamma^2(j+1-\lambda_m\mu)} > \frac{(1\!-\!\epsilon_2)\cdot i^{-b-1}}{2\lambda_m \mu -b - 1} \!+\! O(i^{-2\lambda_m\mu})\label{eq:second_term_final_LB}
	\end{align}

	We conclude from \eqref{eq:second_term_final_UB} and \eqref{eq:second_term_final_LB} that for large $i$, when 	$2\lambda_m \mu -b -1 \neq 0$, and for any $\epsilon_2 > 0$, 
	\begin{align}
		\left|\sum_{j=q'}^{i-1} \!\frac{i^{-2\lambda_m\mu} \cdot \Gamma^2(j)}{\Gamma^2(j+1-\lambda_m\mu)} -\frac{i^{-b-1}}{2\lambda_m \mu -b - 1} \!-\! O(i^{-2\lambda_m\mu})\right| < \epsilon_2
	\end{align}
	which implies that, as $i\rightarrow\infty$,
	\begin{align}
		\sum_{j=q'}^{i-1} \!\frac{i^{-2\lambda_m\mu} \cdot \Gamma^2(j)}{\Gamma^2(j+1-\lambda_m\mu)} \sim \frac{i^{-b-1}}{2\lambda_m \mu -b - 1} \!+\! O(i^{-2\lambda_m\mu}) \label{eq:first_line_b}
	\end{align}
	when  $2\lambda_m \mu -b -1 \neq 0$, which is the first line of \eqref{eq:result_thm:asym_const_term_higher_order}.
	
	On the other hand, when $2\lambda_m \mu -b -1 = 0$, we may use the following indefinite integral in place of \eqref{eq:integral_not_zero}:
	\begin{align}
		\int x^{2\lambda_m\mu - 2 -b} dx = \int \frac{1}{x} dx = \log(x) + \mathrm{constant} \label{eq:integral_zero}
	\end{align}
	Then, we have that
	\begin{align}
		\frac{1}{i^{1+b}} \sum_{j=q'}^{i-1} j^{-1} &= \frac{1}{i^{1+b}} \sum_{j=q'-1}^{i} \frac{1}{j}  \nonumber\\
		&\leq \frac{1}{i^{1+b}} \int_{q'-1}^{i} \frac{1}{x} dx \nonumber\\ 
		&= \frac{\log(i)}{i^{b+1}} + O(i^{-1-b}) \label{eq:zero_UB}
	\end{align}
		Substituting \eqref{eq:zero_UB} into \eqref{eq:second_term_gamma_frac_UB}, we obtain the following upper-bound for the second term of \eqref{eq:sum_of_terms_before_bound}:
	\begin{align}
		\frac{1}{i^{1+b}} \!\sum_{j=q'}^{i-1} \!\frac{j^{-b} \cdot \Gamma^2(j)}{\Gamma^2(j+1-\lambda_m\mu)} < (1+\epsilon_2) \cdot  \frac{\log(i)}{i^{b+1}} + O(i^{-1-b})  \label{eq:second_term_final_UB_zero}
	\end{align}
	Similarly, we can use the integral bound \eqref{eq:integral_bound_general} to obtain the lower-bound:
	\begin{align}
		\frac{1}{i^{1+b}} \sum_{j=q'}^{i-1} j^{-1} &\geq \frac{1}{i^{1+b}} \int_{q'}^{i-1} \frac{1}{x} dx 
		\sim\frac{\log(i)}{i^{b+1}} + O(i^{-1-b}) \label{eq:zero_LB}
	\end{align}	
	Substituting \eqref{eq:zero_LB} into \eqref{eq:second_term_gamma_frac_LB}, we obtain the following lower-bound for the second term of \eqref{eq:sum_of_terms_before_bound}:
	\begin{align}
		\frac{1}{i^{1+b}} \!\sum_{j=q'}^{i-1} \!\frac{j^{-b} \cdot \Gamma^2(j)}{\Gamma^2(j+1-\lambda_m\mu)} > (1-\epsilon_2) \frac{\log(i)}{i^{b+1}} \!+\! O(i^{-1-b})\label{eq:second_term_final_LB_zero}
	\end{align}
	We conclude from \eqref{eq:second_term_final_UB} and \eqref{eq:second_term_final_LB} that when 	$2\lambda_m \mu -b -1 = 0$, and for any $\epsilon_2 > 0$, 
	\begin{align}
		\left|\sum_{j=q'}^{i-1} \!\frac{i^{-2\lambda_m\mu} \cdot \Gamma^2(j)}{\Gamma^2(j+1-\lambda_m\mu)} -\frac{\log(i)}{i^{b+1}} \!-\! O(i^{-b-1})\right| < \epsilon_2
	\end{align}
	which implies that, as $i\rightarrow\infty$,
	\begin{align}
		\sum_{j=q'}^{i-1} \!\frac{i^{-b-1} \cdot \Gamma^2(j)}{\Gamma^2(j+1-\lambda_m\mu)} \sim \frac{\log(i)}{i^{b+1}} \label{eq:second_line_b}
	\end{align}
	when  $2\lambda_m \mu -b -1 = 0$, which is the second line of \eqref{eq:result_thm:asym_const_term_higher_order}.
\end{proof}
Compared with Lemma \ref{lem:asym_constant_term}, we now scale each term in the series by an exponentially decaying weight. We will observe that the convergence rate becomes faster than in Lemma \ref{lem:asym_constant_term}.
\begin{lemma}[\textbf{Series of weighted Gamma functions}]
\label{lem:i^{-2}}
Let $q \geq 1$, $b \geq 0$, and $0 < \rho < 1$. Then we have the following:
\begin{align}
	\frac{1}{i^{2\lambda_m\mu}} \sum_{j=q}^{i-1} \rho^{i-j} \frac{j^{-b} \Gamma^2(j)}{\Gamma^2(j+1-\lambda_m\mu)} &= \Theta(1/i^{2+b}),\quad\quad\!\!\!\!\!\mathrm{as\ }i\rightarrow\infty \label{eq:sum_exponentially_weighted}
\end{align}
and 
\begin{align}
	\frac{\rho}{\log(\rho^{-1})} \leq \lim_{i\rightarrow\infty} \frac{i^{2+b}}{i^{2\lambda_m\mu}} \!\cdot\!  \sum_{j=q}^{i-1}  \frac{\rho^{i-j} j^{-b} \Gamma^2(j)}{\Gamma^2(j+1-\lambda_m\mu)} \leq \frac{1}{\log(\rho^{-1})} \label{eq:UB_LB_exp_weighted}
\end{align}
\end{lemma}
\begin{proof}
	Notice that due to \eqref{eq:asym_frac_Gamma}, we have that for any $\epsilon_2 > 0$, there exists integer $q' \geq 2$ such that for all $j \geq q'$, \eqref{eq:epsilon_upper_lower_series} is satisfied. Therefore, we may divide the sum in \eqref{eq:sum_exponentially_weighted} into two parts, $j < q'$, and $j\geq q'$:
	\begin{align}
		&\frac{1}{i^{2\lambda_m\mu}} \sum_{j=q}^{i-1} \frac{\rho^{i-j} j^{-b} \cdot \Gamma^2(j)}{\Gamma^2(j+1-\lambda_m\mu)} \nonumber\\
		&= \frac{1}{i^{2\lambda_m\mu}} \!\left(\!\!\rho^i\!\sum_{j=1}^{q'-1} \frac{\rho^{-j}\cdot j^{-b}\cdot \Gamma^2(j)}{\Gamma^2(j+1-\lambda_m\mu)} \!+\!\! \sum_{j=q'}^{i-1}  \frac{\rho^{i-j}\cdot j^{-b}\cdot\Gamma^2(j)}{\Gamma^2(j+1-\lambda_m\mu)}\!\!\right) \nonumber\\
		&= O(i^{-2\lambda_m\mu} \rho^{i}) + \frac{1}{i^{2\lambda_m\mu}}  \sum_{j=q'}^{i-1}  \frac{\rho^{i-j}\cdot j^{-b}\cdot\Gamma^2(j)}{\Gamma^2(j+1-\lambda_m\mu)}\label{eq:split_exp_weight}
	\end{align}
Using the right-most bound in \eqref{eq:epsilon_upper_lower_series} for any $\epsilon_2 >0$ and large enough $q'$:
\begin{align}
&\frac{1}{i^{2\lambda_m\mu}} \sum_{j=1}^{i-1} \frac{\rho^{i-j}\cdot \Gamma^2(j)}{\Gamma^2(j+1-\lambda_m\mu)} \nonumber\\
&\leq O(i^{-2\lambda_m\mu} \rho^{i}) + \frac{(1+\epsilon_2)\rho^{i}}{i^{2\lambda_m\mu}} \sum_{j=q'}^{i-1} \rho^{-j} j^{2\lambda_m\mu-2-b}\nonumber\\
	&\stackrel{(a)}{\leq} O(i^{-2\lambda_m\mu} \rho^{i}) + \frac{(1+\epsilon_2)\rho^{i}}{i^{2\lambda_m\mu}} \int_{q'-1}^{i} e^{-x\log(\rho)} x^{2\lambda_m\mu-2-b} dx \label{eq:UB_exp_weight_def_int}
\end{align}
where step $(a)$ is due to \eqref{eq:integral_bound_general}.
Now, in order to compute the definite integral on the right-hand-side of  \eqref{eq:UB_exp_weight_def_int}, we use the following indefinite integral \cite[p.~108]{IntegralsSeries}:
\begin{align}
	\int x^m e^{-\beta x^n} dx = -\frac{\Gamma(\gamma,\beta x^n)}{n \beta^\gamma}+ \mathrm{constant}
\end{align}
where $\gamma \triangleq (m+1)/n$, $\beta \neq 0$, and $n\neq 0$. Making the identifications: $m \leftarrow 2\lambda_m\mu-2-b, \quad n\leftarrow 1, \quad \beta \leftarrow \log(\rho)$, we get
\begin{align}
	\int e^{-x\log(\rho)} x^{2\lambda_m\mu-2-b} dx &= -\frac{\Gamma(2\lambda_m\mu-1-b,\log(\rho) x)}{(\log(\rho))^{2\lambda_m\mu-1-b}} + \nonumber\\
	&\quad\  \mathrm{constant} \label{eq:indef_int2}
\end{align}
Using \eqref{eq:indef_int2} in conjunction with \eqref{eq:UB_exp_weight_def_int}, we obtain the following:
\begin{align}
	&\frac{1}{i^{2\lambda_m\mu}} \sum_{j=1}^{i-1} \frac{\rho^{i-j}\cdot \Gamma^2(j)}{\Gamma^2(j+1-\lambda_m\mu)} \leq O(i^{-2\lambda_m\mu} \rho^{i}) + \nonumber\\ 
	&\frac{(1+\epsilon_2)\rho^{i}}{i^{2\lambda_m\mu}} \!\!\Bigg[\frac{
	\Gamma(2\lambda_m\mu-1-b,(q'\!-\!1)\log(\rho))}{\left(\log(\rho)\right)^{2\lambda_m\mu-1-b}}- \nonumber\\
	&\quad\quad\quad\quad\quad\!\!\frac{
	\Gamma(2\lambda_m\mu-1-b,i\log(\rho))}{\left(\log(\rho)\right)^{2\lambda_m\mu-1-b}}\Bigg] \label{eq:UB_exp_weight_def_int2}
\end{align}
Now, observe that the first term inside the bracket of \eqref{eq:UB_exp_weight_def_int2} is proportional to $i^{-2\lambda_m\mu} \rho^i$, so we may combine it with the first term to obtain:
\begin{align}
\frac{1}{i^{2\lambda_m\mu}} \sum_{j=1}^{i-1} \frac{\rho^{i-j}\cdot \Gamma^2(j)}{\Gamma^2(j+1-\lambda_m\mu)} &\leq O(i^{-2\lambda_m\mu} \rho^{i}) - \nonumber\\
&\!\!\!\!\!\!\!\!\!\!\!\!\!\!\!\!\!\!\!\!\!\!\!\!\!\!\!\!\!\!\!\!\!\!\!\!\!\!\!\frac{(1+\epsilon_2)\rho^{i}}{i^{2\lambda_m\mu}} \cdot \frac{
	\Gamma(2\lambda_m\mu-1-b,i\log(\rho))}{\left(\log(\rho)\right)^{2\lambda_m\mu-1-b}} \label{eq:second_to_last_weighted_UB}
\end{align}
What remains to be done is to characterize the convergence rate of the second term in \eqref{eq:second_to_last_weighted_UB}. To do this, we utilize the asymptotic expansion of the upper incomplete Gamma function listed in \eqref{eq:asym_upper_gamma} with the identifications $s \leftarrow 2\lambda_m\mu-1-b, x \leftarrow i\log(\rho), M\leftarrow 1$ and write
\begin{align}	
\Gamma(&2\lambda_m\mu-1-b,i\log(\rho)) \nonumber\\ 
&= (i\log(\rho))^{2\lambda_m\mu-2-b} \cdot e^{-i\log(\rho)} \left[1 + O\!\left(|i\log(\rho))|^{-1}\right)\right] \nonumber\\
&= i^{2\lambda_m\mu-2-b} (\log(\rho))^{2\lambda_m\mu-2-b} \rho^{-i} + O(\rho^{-i} i^{2\lambda_m\mu-3-b}) \label{eq:inc_gamma_expansion}
\end{align}
Substituting \eqref{eq:inc_gamma_expansion} into \eqref{eq:second_to_last_weighted_UB}, we obtain
\begin{align}
&\frac{1}{i^{2\lambda_m\mu}} \sum_{j=1}^{i-1} \frac{\rho^{i-j}\cdot \Gamma^2(j)}{\Gamma^2(j+1-\lambda_m\mu)} \nonumber\\
&\leq O(i^{-2\lambda_m\mu} \rho^{i}) - \frac{(1+\epsilon_2)\rho^{i}}{i^{2\lambda_m\mu}} \frac{
	1}{(\log(\rho))^{2\lambda_m\mu-1-b}} \times\nonumber\\
	&\quad\  \left[i^{2\lambda_m\mu-2-b} (\log(\rho))^{2\lambda_m\mu-2-b} \rho^{-i} + O(\rho^{-i} i^{2\lambda_m\mu-3-b})\right]\nonumber\\
	&= O(i^{-2\lambda_m\mu} \rho^{i}) + (1+\epsilon_2) \frac{i^{-2-b}}{\log(\rho^{-1})}  + O(i^{-3-b}) \label{eq:final_LB_exp_weight}
\end{align}
The lower bound is derived in a similar way starting with \eqref{eq:split_exp_weight}, except for taking the integral limits for the lower-bound to be $\{q,i-1\}$:
\begin{align}
	\frac{1}{i^{2\lambda_m\mu}} \sum_{j=1}^{i-1} \frac{\rho^{i-j}\cdot \Gamma(j)^2}{\Gamma(j+1-\lambda_m\mu)^2} &\geq O(i^{-2\lambda_m\mu} \rho^{i}) + O(i^{-3-b})\nonumber\\
	&\quad\ \frac{(1-\epsilon_2)\rho\cdot  i^{-2-b}}{\log(\rho^{-1})}\label{eq:final_UB_exp_weight}
\end{align}
as $i\rightarrow\infty$. Since the upper and lower-bounds \eqref{eq:final_LB_exp_weight}--\eqref{eq:final_UB_exp_weight} hold for any $\epsilon_2>0$ and large $i$, we obtain \eqref{eq:UB_LB_exp_weighted}.
\end{proof}

The final intermediate result we will establish concerns the convergence rate of a sequence of traces.
\begin{lemma}[\textbf{Convergence rate of a series of traces}]
\label{lem:series_of_traces}
Assume that $\Sigma = \frac{1}{2} E_{kk} \!\otimes\! H$, $b\geq 0$, $q\geq 1$, and let ${\cal L}$ denote a block diagonal matrix: 
\begin{align}
	\mathcal{L} \triangleq \textrm{blockdiag}\{L_{1},\ldots,L_N\} \label{eq:L_blockdiag}
\end{align}
where each $L_k \in \mathbb{R}^{M\times M}$. Then,
\begin{align}
	C(i) &\triangleq \sum_{j=q}^i j^{-b} \mu^2(j) \Tr(\mathcal{L} \Omega_{j,i}) \sim \frac{\mu^2}{2} \sum_{m=1}^M \lambda_m \alpha_{m,b}(i) p^\T L_m' p \label{eq:desired_C}
\end{align}
where $\mu(i) \triangleq \mu/i$, $\Omega_{j,i}$ is defined by \eqref{eq:def_Gamma},  
\begin{align}
	\alpha_{m,b}(i) \!\!\triangleq\!\! \begin{cases}
																										\frac{i^{-b-1}}{2\lambda_m \mu -b - 1} + \Theta(i^{-2\lambda_m\mu}), & 2\lambda_{m}\mu \neq 1+b\\
																										\frac{\log(i)}{i^{b+1}}, &2\lambda_{m}\mu = 1+b
																										\end{cases}
	\label{eq:alpha_m}
\end{align}
and
\begin{align}
L_m' \triangleq \mathrm{diag}\{(\Phi^\T L_{1} \Phi)_{mm}, (\Phi^\T L_{N} \Phi)_{mm}\} \label{eq:L_prime}
\end{align}
and $\Phi$ is the eigen-basis of the matrix $H$ as defined in Assumption \ref{ass:HessianAssumption} and \eqref{eq:eigendecomp_H}.
\end{lemma}
\begin{proof}
	Introduce the Jordan canonical form of matrix $A$:
	\begin{align}
		A = T D T^{-1} \label{eq:A_jordan1}
	\end{align}
	Now, recall the definition of the weighting matrix $\Omega_{j,i}$ from \eqref{eq:def_Gamma}, reproduced here for convenience:
	\begin{align}
	\Omega_{j,i} &\triangleq \left(\prod_{t=j+1}^{i-1} \B_t^\T\right) \!\!\Sigma \!\left(\prod_{t=j+1}^{i-1} \B_t^\T\right)^{\!\!\!\!\!\T} &&
	\end{align}
	Substituting  \eqref{eq:sigma}, \eqref{eq:B_i}, and \eqref{eq:A_jordan1} into $\Omega_{j,i}$, we obtain:
\begin{align}
	\Omega_{j,i} &= \frac{1}{2} TD^{i-j}T^{-1}E_{kk}T^{-\T}{D^\T}^{i-j}T^\T \otimes \Phi K_{j,i} \Phi^\T \label{eq:prod_F_j_i}
\end{align}
where $K_{j,i}$ was defined in \eqref{eq:K_i}, repeated here for convenience:
\begin{equation}
	K_{j,i} \triangleq \left(\!\prod_{t=j+1}^{i-1}\!(I_M \!-\! \mu(t) \Lambda)\!\right) \!\Lambda\! \left(\!\prod_{t=j+1}^{i-1}\!(I_M \!-\! \mu(t) \Lambda)\!\right)
\end{equation}	
Thus, we can re-write $C(i)$ as:
	\begin{align}
	C(i) &= \frac{1}{2} \sum_{j=q}^{i-1} j^{-b} \Tr\bigg(\mathcal{L} (TD^{i-j}T^{-1}E_{kk}T^{-\T}{D^\T}^{i-j}T^\T \otimes \nonumber\\
	&\quad\quad\quad\quad\quad\quad\quad\ \mu^2(j) \Phi K_{j,i} \Phi^\T)\bigg) \label{eq:asym_Tr_0}
	\end{align}
	We denote the entries of the matrix below by $[b_{mn}]$:
	\begin{align}
	TD^{i-j}T^{-1}E_{kk}T^{-\T}{D^\T}^{i-j}T^\T = \left[\begin{array}{cccc}
	b_{11} & b_{12} & \cdots & b_{1N} \\ 
	b_{21} & b_{22} & \cdots & b_{2,N} \\ 
	\vdots & \vdots & \ddots & \vdots \\
	b_{N1} & b_{N2} & \cdots & b_{NN}
	\end{array} \right] \label{eq:5}
\end{align}
so that
\begin{align}
	T&D^{i-j}T^{-1}E_{kk}T^{-\T}{D^\T}^{i-j}T^\T \otimes \mu^2(j) \Phi K_{j,i} \Phi^\T \nonumber\\
	&=\mu^2(j) \left[\begin{array}{ccc}
	b_{11}\Phi K_{j,i} \Phi^\T  & \cdots & b_{1N} \Phi K_{j,i} \Phi^\T \\ 
	\vdots & \ddots & \vdots \\ 
	b_{N1}\Phi K_{j,i} \Phi^\T & \cdots & b_{NN} \Phi K_{j,i} \Phi^\T
	\end{array} \right] \label{eq:second_matrix}
\end{align}
Substituting \eqref{eq:L_blockdiag} and \eqref{eq:second_matrix} into \eqref{eq:def_AT}, we obtain
\begin{align}
	C(i) &\triangleq \frac{1}{2} \sum_{j=1}^{i-1} j^{-b} \mu^2(j) \Tr\!\left(\left[\begin{array}{ccc}
	L_1 &  &  \\ 
	 & \ddots &  \\ 
	 &  & L_N
	\end{array} \right] \times\right.\nonumber\\
	&\quad\quad\quad\quad\quad\left.\left[\begin{array}{ccc}
	b_{11}\Phi K_{j,i} \Phi^\T  & \cdots & b_{1N} \Phi K_{j,i} \Phi^\T \\ 
	\vdots & \ddots & \vdots \\ 
	b_{N1}\Phi K_{j,i} \Phi^\T & \cdots & b_{NN} \Phi K_{j,i} \Phi^\T
	\end{array} \right]\right)\ \nonumber\\
	&= \frac{1}{2} \sum_{j=1}^{i-1} j^{-b} \mu^2(j) \sum_{k=1}^N b_{kk} \Tr\left(\Phi^\T L_k\Phi K_{j,i} \right) \label{eq:AT2}
\end{align}
Now, we know that the matrix $K_{j,i}$ is diagonal with diagonal elements:
\begin{align}
	[K_{j,i}]_{mm} = \lambda_m \prod_{t=j+1}^{i-1} (1-\mu(t) \lambda_m)^2 \label{eq:11}
\end{align}
Let us denote the matrix $\Phi^\T L_k\Phi$ by
\begin{align}
	L_k' \triangleq \Phi^\T L_k \Phi \label{eq:12}
\end{align}
so that
\begin{align}
	L_k' K_{j,i} &= \left[
	\begin{array}{cccc}
	\left[L_k'\right]_{11} & \left[L_k'\right]_{12} & \cdots & \left[L_k'\right]_{1M} \\
	\left[L_k'\right]_{21} & \left[L_k'\right]_{22} & \cdots & \left[L_k'\right]_{2M}\\
	\vdots & \vdots & \ddots & \vdots\\
	\left[L_k'\right]_{M1} & \left[L_k'\right]_{M2} & \cdots & \left[L_k'\right]_{MM}\\
	\end{array} 
	\right] \times \nonumber\\
	&\quad\quad\  \left[\begin{array}{cccc}
	[K_{j,i}]_{11} &  &  &  \\ 
	 & [K_{j,i}]_{22} &  &  \\ 
	 &  & \ddots &  \\ 
	 &  &  & [K_{j,i}]_{MM}
	\end{array} \right]
\end{align}
and \eqref{eq:AT2} becomes
\begin{align}
C(i) &= \frac{1}{2} \sum_{j=1}^{i-1} j^{-b} \mu^2(j) \sum_{k=1}^N b_{kk} \times\nonumber\\
&\ \Tr\left(\left[
	\begin{array}{ccc}
	\left[K_{j,i}\right]_{11} \left[L_k'\right]_{11}  & \cdots & \left[K_{j,i}\right]_{MM} \left[L_k'\right]_{1M} \\
	\left[K_{j,i}\right]_{11}\left[L_k'\right]_{21}  & \cdots & \left[K_{j,i}\right]_{MM} \left[L_k'\right]_{2M}\\
	\vdots & \ddots & \vdots\\
	\left[K_{j,i}\right]_{11}\left[L_k'\right]_{M1} & \cdots & \left[K_{j,i}\right]_{MM} \left[L_k'\right]_{MM}
	\end{array} 
	\right] \right) \nonumber\\
	&= \frac{1}{2} \sum_{m=1}^M  \sum_{j=1}^{i-1} \left[K_{j,i}\right]_{mm} j^{-b}  \mu^2(j) \sum_{k=1}^N b_{kk}   \left[L_k'\right]_{mm} \label{eq:17}
\end{align}
Finally, note that 
\begin{align}
	\Tr&\left(\!\left[\!\!\begin{array}{cccc}
	b_{11} & b_{12} & \cdots & b_{1N} \\ 
	b_{21} & b_{22} & \cdots & b_{2,N} \\ 
	\vdots & \vdots & \ddots & \vdots \\
	b_{N1} & b_{N2} & \cdots & b_{NN}
	\end{array} \!\!\right] \!\!\left[\!\!\!\begin{array}{cccc}
	\left[L_1'\right]_{mm} &  &  &  \\ 
	 & \!\!\!\!\!\!\left[L_2'\right]_{mm}\!\!\!\!\!\! &  &  \\ 
	 &  & \!\!\!\!\!\!\ddots\!\!\!\!\!\! &  \\ 
	 &  &  & \!\!\left[L_N'\right]_{mm}\!\!
	\end{array} \!\!\!\right]\! \right)\nonumber\\
	&\quad\quad\quad\quad\quad\quad\quad\quad\quad\quad= \sum_{k=1}^N b_{kk}   \left[L_k'\right]_{mm} \label{eq:18}
\end{align}
Combining \eqref{eq:5}, \eqref{eq:11},\eqref{eq:12}, \eqref{eq:17}, and \eqref{eq:18}, we obtain:
\begin{align}
	C(i) &= \frac{1}{2} \sum_{m=1}^M \sum_{j=q}^{i-1}   j^{-b}  \left[\mu^2(j) \lambda_m \prod_{t=j+1}^{i-1} (1-\mu(t) \lambda_m)^2\right] \times \nonumber\\
	&\quad\quad\quad\ \Tr\left(T D^{i-j}T^{-1}E_{kk}T^{-\T}{D^\T}^{i-j}T^\T L_{m}' \right)
\label{eq:asym_Tr_3}
\end{align}
where $L_m'$ is defined by \eqref{eq:L_prime}. Next we observe that the product $\prod_{t=j+1}^{i-1} (1-\mu(t) \lambda_m)^2$ has the same form as the term in \eqref{eq:SS_finite_product_scalar} when $\mu(t) = \mu/t$ and, hence, it can be described by ratios of Gamma functions:
\begin{align}
	\prod_{t=j+1}^{i-1} (1-\mu(t) \lambda_m)^2 = \frac{\Gamma^2(i-\lambda_m\mu)}{\Gamma^2(i)}\cdot\frac{\Gamma^2(j+1)}{\Gamma^2(j+1-\lambda_m\mu)} \label{eq:gamma_prod_expansion}
\end{align}
We now utilize \eqref{eq:asym_frac_Gamma} with the identifications $s \leftarrow i, a \leftarrow -\lambda_m \mu, c \leftarrow 0$ to deduce that the first fraction in \eqref{eq:gamma_prod_expansion} will converge asymptotically as $i^{-2\lambda_m\mu}$. Therefore, we have
\begin{align}
	\mu^2(j) \lambda_m\!\!\! \prod_{t=j+1}^{i-1} \!\!\!\!(1\!-\!\mu(t) \lambda_m)^2 &\sim \frac{\lambda_m \mu^2 i^{-2\lambda_m\mu} \cdot \Gamma^2(j\!+\!1)}{j^2 \cdot \Gamma^2(j+1-\lambda_m\mu)} \nonumber\\
	&\stackrel{(a)}{=}  \frac{\lambda_m \mu^2 i^{-2\lambda_m\mu} \cdot \Gamma^2(j)}{\Gamma^2(j+1-\lambda_m\mu)}
	\label{eq:asym_finite_product}
\end{align}
where step $(a)$ is due to the fact that $x\cdot \Gamma(x) = \Gamma(x+1)$. Substituting \eqref{eq:asym_finite_product} into \eqref{eq:asym_Tr_3}, we get as $i\rightarrow \infty$:
\begin{align}
C(i) &= \frac{\mu^2}{2} \sum_{m=1}^M \frac{\lambda_m}{i^{2\lambda_m\mu}} \sum_{j=q}^{i-1} j^{-b} \frac{\Gamma^2(j)}{\Gamma^2(j+1-\lambda_m\mu)}\times\nonumber\\
&\quad\quad\quad\ \Tr\left(T D^{i-j}T^{-1}E_{kk}T^{-\T}{D^\T}^{i-j}T^\T L_m'\right)\label{eq:asym_Tr_4}
\end{align}
We use $\Tr(A^\T B C D^\T) = (\mathrm{vec}(A))^\T (D \otimes B) \mathrm{vec}(C)$ to expand the trace as:
\begin{align}
	&C(i) = \frac{\mu^2}{2} \!\sum_{m=1}^M (\textrm{vec}(T^\T L_m' T))^\T \lambda_m \times\nonumber\\
	&\left[\frac{1}{i^{2\lambda_m\mu}}\sum_{j=q}^{i-1} \frac{j^{-b} \cdot \Gamma^2(j)}{\Gamma^2(j\!+\!1\!-\!\lambda_m\mu)} (D\! \otimes\! D)^{i-j}\right]\!\!\textrm{vec}(T^{-1}E_{kk}T^{-\T})\label{eq:asym_Tr_5}
\end{align}
First, we will examine the following sum:
\begin{align}
\frac{1}{i^{2\lambda_m\mu}} \sum_{j=q}^{i-1} \frac{j^{-b} \cdot \Gamma^2(j)}{\Gamma^2(j+1-\lambda_m\mu)} (D \!\otimes\! D)^{i-j} \label{eq:orig}
\end{align}
Using \eqref{eq:decomposition_D} we have
\begin{align}
	(D \otimes D)^{i-j} &= D^{i-j} \otimes D^{i-j} \nonumber\\
	&= \left[\begin{array}{cc}
	1 &  \\ 
	 & D_{N-1}^{i-j}
	\end{array} \right] \otimes \left[\begin{array}{cc}
	1 &  \\ 
	 & D_{N-1}^{i-1}
	\end{array} \right] \nonumber\\
	&= \mathrm{diag}\{1, D_{N-1}^{i-j}, D_{N-1}^{i-j}, D_{N-1}^{i-1} \otimes D_{N-1}^{i-1}\} \label{eq:DoD}
\end{align}
Substituting \eqref{eq:DoD} into \eqref{eq:orig}, we obtain:
\begin{align}
	\frac{1}{i^{2\lambda_m\mu}} &\sum_{j=q}^{i-1} \frac{j^{-b} \cdot \Gamma^2(j)}{\Gamma^2(j+1-\lambda_m\mu)} (D \!\otimes\! D)^{i-j} \nonumber\\
	&= \frac{1}{i^{2\lambda_m\mu}} \sum_{j=q}^{i-1} \frac{j^{-b} \cdot \Gamma^2(j)}{\Gamma^2(j+1-\lambda_m\mu)} \times\nonumber\\
	&\quad\ \mathrm{diag}\{1, D_{N-1}^{i-j} , D_{N-1}^{i-j}, D_{N-1}^{i-1} \otimes D_{N-1}^{i-1}\}\nonumber\\
	&= \mathrm{diag}\{\tau(i), A_{1}(i), A_1(i), A_2(i)\}   \label{eq:asym_largest_expansion}
\end{align}
where
\begin{align}
	\tau(i) &\triangleq i^{-2\lambda_m\mu} \sum_{j=q}^{i-1} \frac{j^{-b} \cdot \Gamma^2(j)}{\Gamma^2(j+1-\lambda_m\mu)}\\
	A_1(i) &\triangleq i^{-2\lambda_m\mu}\sum_{j=q}^{i-1} \frac{j^{-b} \cdot \Gamma^2(j)}{\Gamma^2(j+1-\lambda_m\mu)} D_{N-1}^{i-j}\\
	A_2(i) &\triangleq i^{-2\lambda_m\mu}\sum_{j=q}^{i-1} \frac{j^{-b} \cdot \Gamma^2(j)}{\Gamma^2(j+1-\lambda_m\mu)} D_{N-1}^{i-j} \otimes D_{N-1}^{i-j}
\end{align}
From Lemma \ref{lem:asym_constant_term} we conclude that $\tau(i)$ converges asymptotically according to \eqref{eq:result_thm:asym_const_term_higher_order}:
\begin{align}
	\tau(i) &\sim \begin{cases}
																										\frac{i^{-b-1}}{2\lambda_m \mu -b - 1} + \Theta(i^{-2\lambda_m\mu}), & 2\lambda_{m}\mu \neq 1+b\\
																										\frac{\log(i)}{i^{b+1}}, &2\lambda_{m}\mu = 1+b
																										\end{cases} \nonumber\\
																										&=  \alpha_{m,b}(i)
\end{align}
where $\alpha_{m,b}(i)$ was defined in \eqref{eq:alpha_m}. We now examine the convergence rate of the other three sub-matrices in  \eqref{eq:asym_largest_expansion}. The second and third sub-matrices can be shown to converge to zero at a faster rate via:
\begin{align}
	\left\|A_1(i)\right\| &\leq \sum_{j=1}^{i-1} \frac{i^{-2\lambda_m\mu}\cdot j^{-b}\cdot\Gamma^2(j)}{\Gamma^2(j+1-\lambda_m\mu)} \|D_{N-1}^{i-j}\| \nonumber\\
	&\stackrel{(a)}{\leq} c\cdot \sum_{j=1}^{i-1} \frac{i^{-2\lambda_m\mu}\cdot j^{-b}\cdot\Gamma^2(j)}{\Gamma^2(j+1-\lambda_m\mu)} \left(\frac{\rho(D_{N-1})+1}{2}\right)^{i-j} \label{eq:bound_matrix}
\end{align}
\noindent where step $(a)$ is due to Lemma \ref{lem:matrix_power}. In order to evaluate the rate of decay of the above terms, we appeal to Lemma \ref{lem:i^{-2}} by making the  identification $\rho \leftarrow \frac{\rho(D_{N-1})+1}{2}$ where $\rho < 1$ since the matrix $A$ is primitive. We conclude that the matrix $A_1(i)$ will converge to the zero matrix at a faster rate than $\tau(i)$; i.e.,
\begin{align}
	A_1(i) = \Theta(1/i^{2+b}) 
\end{align}
In a similar manner, we observe that
\begin{equation}
	\left\|A_2(i)\right\| \leq c\cdot \sum_{j=1}^{i-1} \frac{i^{-2\lambda_m\mu}\cdot j^{-b}\cdot\Gamma^2(j)}{\Gamma^2(j+1-\lambda_m\mu)} \!\left(\!\frac{\rho(D_{N-1})+1}{2}\!\right)^{2(i-j)}
\end{equation}
Using Lemma \ref{lem:i^{-2}} with the following identification:
\begin{align}
	\rho \leftarrow \left(\frac{\rho(D_{N-1})+1}{2}\right)^{2}
\end{align}
we also conclude that $A_2(i) = \Theta(1/i^{2+b})$. Since the matrices $A_1(i)$ and $A_2(i)$ in \eqref{eq:asym_largest_expansion} will converge to zero at a relatively high rate, we can now deduce the following asymptotic relationship for \eqref{eq:asym_largest_expansion}:
\begin{align}
	\frac{1}{i^{2\lambda_m\mu}} \sum_{j=1}^{i-1} \frac{j^{-b}\cdot\Gamma^2(j)}{\Gamma^2(j+1-\lambda_m\mu)} (D \!\otimes\! D)^{i-j} \!\sim\! \alpha_{m,b}(i) E_{11} \!\otimes\! E_{11}
	\label{eq:inner_matrix}
\end{align}
where $\alpha_{m,b}(i)$ was defined in \eqref{eq:alpha_m} and $E_{11}$ is an $N\times N$ matrix with a single one at the $(1,1)$-th element, and all other elements are zero. Now, substituting \eqref{eq:inner_matrix} back into \eqref{eq:asym_Tr_5} and using the identity $\Tr(A^\T B C D^\T) = (\mathrm{vec}(A))^\T (D \otimes B) \mathrm{vec}(C)$ again, we get
\begin{align}
C(i) \!&=\! \frac{\mu^2}{2} \sum_{m=1}^M \lambda_m \alpha_{m,b}(i) \Tr\left(L_m' T E_{11} T^{-1}E_{kk}T^{-\T} E_{11}T^\T\right) \label{eq:asym_Tr_6}
\end{align}
We observe that $T E_{11} T^{-1}$ is a rank-$1$ matrix that is spanned by the left- and right-eigenvectors of $A$ corresponding to the eigenvalue at one. The left eigenvector is $\mathds{1}_N$ since $A$ is left-stochastic. The right eigenvector is the Perron vector, $p$, which is normalized so that the sum of its entries is equal to one, i.e., $p^\T \mathds{1}_N = 1$, and $A p = p$. Then, $T E_{11} T^{-1} = p \mathds{1}^\T_N$. Substituting into \eqref{eq:asym_Tr_6}, we arrive at the desired expression \eqref{eq:desired_C}. 
\end{proof}

\vspace{\baselineskip}
\bibliographystyle{IEEEbib}
\bibliography{refs}
 \newpage

\begin{IEEEbiography}[{\includegraphics[width=1in,height=1.25in,clip,keepaspectratio]{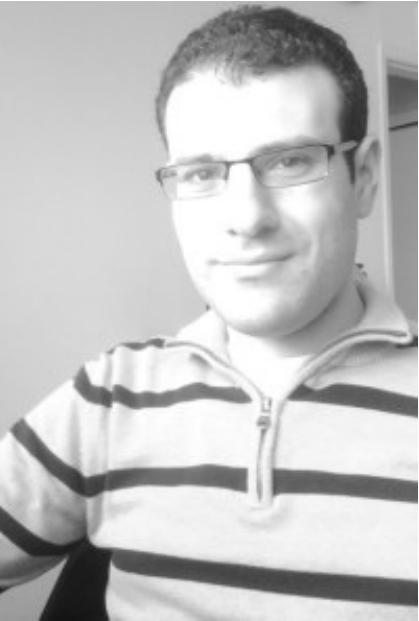}}]{Zaid J. Towfic}

(S'06--M'15) received his B.S. degrees in electrical engineering, computer science, and mathematics from the University of Iowa in 2007. He received his M.S. and Ph.D. degrees in electrical engineering from the University of California, Los Angeles (UCLA) in 2009 and 2014, respectively. Between 2008 and 2014, he was a member of the Adaptive Systems Laboratory at UCLA. From 2007 to 2008, he worked for Rockwell Collins' Advanced Technology Center (Cedar Rapids, IA) working on MIMO, embedded sensing, and digital modulation classification. He is currently a Technical Staff member at MIT Lincoln Laboratory (Lexington, MA). His research interests include machine learning, stochastic optimization, signal processing, adaptive systems, and information theory. 
\end{IEEEbiography}

\vspace{-8\baselineskip}
\begin{IEEEbiography}[{\includegraphics[width=1in,height=1.25in,clip,keepaspectratio]{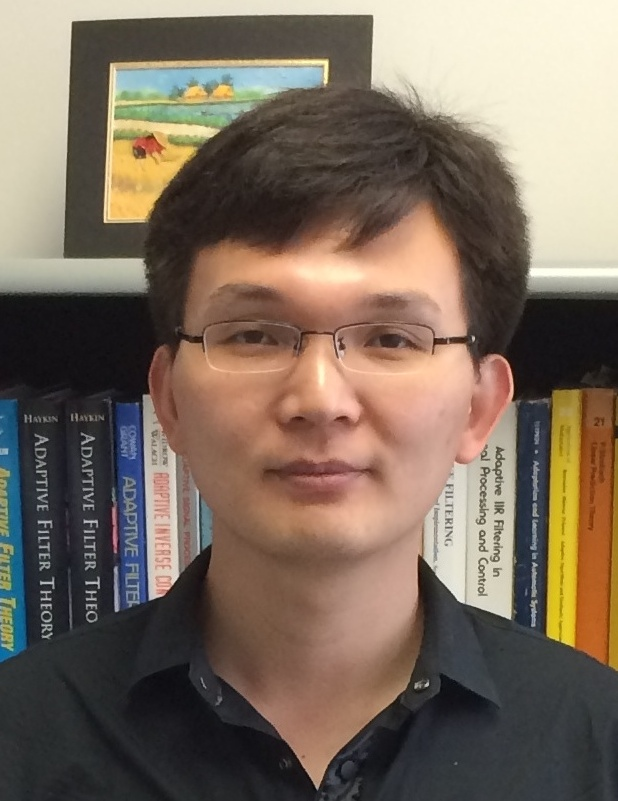}}]{Jianshu Chen}
(S'10--M'15) received his B.S. and M.S. degrees in Electronic Engineering from Harbin Institute of Technology in 2005 and Tsinghua University in 2009, respectively. He received his Ph.D degree in Electrical Engineering from University of California, Los Angeles (UCLA) in 2014. Between 2009 and 2014, he was a member of the Adaptive Systems Laboratory at UCLA. And between 2014 and 2015, he was a postdoctoral researcher at Microsoft Research, Redmond, WA. He is currently a researcher at Microsoft Research, Redmond, WA. His research interests include statistical signal processing, stochastic approximation, distributed optimization, machine learning, reinforcement learning, and smart grids. 
\end{IEEEbiography}

\vspace{-8\baselineskip}
\begin{IEEEbiography}[{\includegraphics[width=1in,height=1.25in,clip,keepaspectratio]{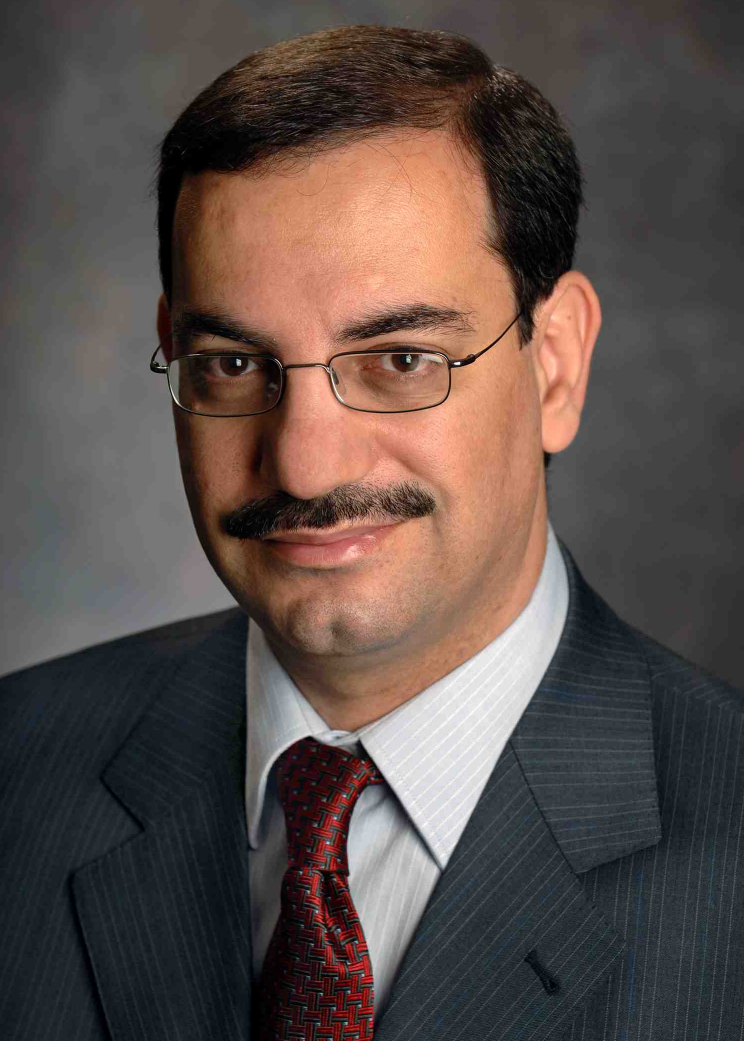}}]{Ali H. Sayed}
(S'90--M'92--SM'99--F'01) received the M.S. degree in electrical engineering from the University of Sao Paulo, Brazil, in 1989 and the Ph.D. degree in electrical engineering from Stanford University, Stanford, CA, in 1992. He is professor and former chairman of electrical engineering at the University of California, Los Angeles, USA, where he directs the UCLA Adaptive Systems Laboratory. An author of more than 480 scholarly publications and six books, his research involves several areas including adaptation and learning, statistical signal processing, distributed processing, network science, and biologically inspired designs. Dr. Sayed has received several awards including the 2015 Education Award from the IEEE Signal Processing Society, the 2014 Athanasios Papoulis Award from the European Association for Signal Processing, the 2013 Meritorious Service Award, and the 2012 Technical Achievement Award from the IEEE Signal Processing Society. Also, the 2005 Terman Award from the American Society for Engineering Education, the 2003 Kuwait Prize, and the 1996 IEEE Donald G. Fink Prize. He served as Distinguished Lecturer for the IEEE Signal Processing Society in 2005 and as Editor-in Chief of the \textsc{IEEE Transactions on Signal Processing} (2003-2005). His articles received several Best Paper Awards from the IEEE Signal Processing Society (2002, 2005, 2012, 2014). He is a Fellow of the American Association for the Advancement of Science (AAAS). He is recognized as a Highly Cited
Researcher by Thomson Reuters.
\end{IEEEbiography}

\end{document}